\documentclass[11pt,reqno]{amsart}
\usepackage{color}
\usepackage[english]{babel}
\usepackage{amsfonts}
\usepackage{amsmath}
\usepackage{amsthm}
\usepackage{amssymb}
\usepackage[misc]{ifsym}
\usepackage{bbm}
\usepackage{mathrsfs}
\usepackage{esint}
\usepackage{faktor}
\usepackage[utf8]{inputenc}
\usepackage{afterpage}
\usepackage[left=2.9cm,right=2.9cm,top=2.8cm,bottom=2.8cm]{geometry}
\usepackage{graphicx}
\usepackage[pagewise,displaymath,mathlines]{lineno}
\usepackage{enumitem}
\usepackage[colorlinks=true,urlcolor=blue,citecolor=blue,linkcolor=blue,linktocpage,pdfpagelabels,bookmarksnumbered,bookmarksopen]{hyperref}

\newcommand{\N}{\mathbb{N}}

\newcommand{\R}{\mathbb{R}}

\newcommand{\Div}{\mathrm{div} \, }
\newcommand{\dx}{\, {\rm d} x}
\newcommand{\dy}{\, {\rm d} y}
\newcommand{\dt}{\, {\rm d} t}
\newcommand{\ds}{\, {\rm d} s}
\newcommand{\dr}{\, {\rm d} r}
\newcommand{\dtau}{\, {\rm d} \tau}
\newcommand{\eps}{\varepsilon}
\renewcommand{\phi}{\varphi}

\newtheorem{lemma}{Lemma}[section]
\newtheorem{thm}[lemma]{Theorem}
\newtheorem{prop}[lemma]{Proposition}

\theoremstyle{definition}
\newtheorem{defi}[lemma]{Definition}
\newtheorem{rmk}[lemma]{Remark}
\newtheorem{ex}[lemma]{Example}

\numberwithin{equation}{section}

\DeclareMathOperator*{\supp}{supp}

\DeclareMathOperator*{\dom}{dom}

\begin{document}

\title[Singular $\Phi$-Laplacian systems in $\R^N$]{Existence, uniqueness, and decay results \\ for singular $\Phi$-Laplacian systems in $\R^N$}
\author[L. Gambera]{Laura Gambera}
\address[L. Gambera]{Dipartimento di Matematica e Informatica, Universit\`a degli Studi di Catania, Viale A. Doria 6, 95125 Catania, Italy}
\email{laura.gambera@studium.unict.it}
\author[U. Guarnotta]{Umberto Guarnotta}
\address[U. Guarnotta]{Dipartimento di Matematica e Informatica, Universit\`a degli Studi di Catania, Viale A. Doria 6, 95125 Catania, Italy}
\email{umberto.guarnotta@studium.unict.it}

\maketitle

\begin{abstract}
Existence of solutions to a $\Phi$-Laplacian singular system is obtained via shifting method and variational methods. A priori estimates are furnished through De Giorgi's technique, Talenti's rearrangement argument, and exploiting the weak Harnack inequality, while decay of solutions is obtained via comparison with radial solutions to auxiliary problems. Finally, uniqueness is investigated, and a D\'iaz-Sa\'a type result is provided. 
\end{abstract}

{
\let\thefootnote\relax
\footnote{{\bf{MSC 2020}}: 35J50, 35B08, 35A02, 35B45.}
\footnote{{\bf{Keywords}}: variational systems, singular problems, Orlicz spaces, decay estimates, uniqueness.}
\footnote{\Letter \quad Corresponding author: Umberto Guarnotta (umberto.guarnotta@studium.unict.it).}
}
\setcounter{footnote}{0}

\begin{center}
\begin{minipage}{11cm}
\begin{small}
\tableofcontents
\end{small}
\end{minipage}
\end{center}

\section{Introduction, main results, and examples}

In this paper we study the problem

\begin{equation}
\label{prob}
\tag{P}
\left\{
\begin{alignedat}{2}
-\Delta_{\Phi} u &= h(x)f(u,v) &&\quad \mbox{in}\;\; \R^N, \\
-\Delta_{\Psi} v &= k(x)g(u,v) &&\quad \mbox{in}\;\; \R^N, \\
u,v &>0 &&\quad \mbox{in}\;\; \R^N, \\
u(x),v(x) &\to 0 &&\quad \mbox{as}\;\; |x| \to +\infty.
\end{alignedat}
\right.
\end{equation}
The principal parts of the operators appearing in \eqref{prob} are defined as
\begin{equation*}
\Delta_\Phi u := \Div\left(\phi(|\nabla u|)\frac{\nabla u}{|\nabla u|}\right), \quad \Delta_\Psi v := \Div\left(\psi(|\nabla v|)\frac{\nabla v}{|\nabla v|}\right),
\end{equation*}
where $\phi=\Phi'$ and $\psi=\Psi'$ (see \hyperlink{H1}{${\rm (H_1)}$} below). Moreover, $f,g:\R_+\times\R_+\to\R_+$ are continuous functions. We suppose the following hypotheses (please refer to Section 2 for notation).

\begin{enumerate}[label={${\rm (H_1)}$}]
\hypertarget{H1}{}
\item \label{ellipticity}
The Young functions $\Phi,\Psi$ are of class $C^2$ and
\begin{equation*}
\begin{split}
&0 < i_\phi := \inf_{t>0}\frac{t\phi'(t)}{\phi(t)} \leq \sup_{t>0}\frac{t\phi'(t)}{\phi(t)} =: s_\phi < +\infty, \\
&0 < i_\psi := \inf_{t>0}\frac{t\psi'(t)}{\psi(t)} \leq \sup_{t>0}\frac{t\psi'(t)}{\psi(t)} =: s_\psi < +\infty.
\end{split}
\end{equation*}
Moreover, $\max\{s_\Phi,s_\Psi\}<N$.
\end{enumerate}
\begin{enumerate}[label={${\rm (H_2)(\roman*)}$}]
\hypertarget{H2}{}
\item \label{varstruct} There exists $H:\R^N\times\R_+\times\R_+\to\R$ such that
\begin{itemize}
\item $H(\cdot,s,t)$ is measurable for all $s,t\in\R_+$;
\item $H(x,\cdot,\cdot)$ is of class $C^1$ for all $x\in\R^N$;
\item $\partial_s H(x,s,t) = h(x)f(s,t)$ for all $(x,s,t)\in\R^N\times\R_+\times\R_+$;
\item $\partial_t H(x,s,t) = k(x)g(s,t)$ for all $(x,s,t)\in\R^N\times\R_+\times\R_+$;
\item $H(\cdot,0,0)\equiv 0$.
\end{itemize}
\item \label{growthcond} For suitable Young functions $\Upsilon_i,\Gamma_i$, $i=1,2$, with $\Upsilon_1\ll\Phi$ and $\Gamma_2\ll\Psi$, one has
\begin{equation*}
\begin{split}
f(s,t) &\leq C \left[ (s^{-\alpha}+1)\left(\frac{\Upsilon_2(t)}{t}+1\right) + \frac{\Upsilon_1(s)}{s} \right], \\
g(s,t) &\leq C \left[ (t^{-\beta}+1)\left(\frac{\Gamma_1(s)}{s}+1\right) + \frac{\Gamma_2(t)}{t} \right], \\
\end{split}
\end{equation*}
for all $(s,t)\in\R_+\times\R_+$, where $\alpha,\beta\in(0,1)$. Moreover,
\begin{equation}
\label{prodcond}
\overline{\Phi} \ll \Psi\circ\Upsilon_2^{-1}\circ\overline{\Upsilon}_2 \quad \mbox{and} \quad \overline{\Psi} \ll \Phi\circ\Gamma_1^{-1}\circ\overline{\Gamma}_1.
\end{equation}
\item \label{degiorgicond} There exist $\delta_1>\frac{N}{i_\Phi}$, $\delta_2>\frac{N}{i_\Psi}$ such that
\begin{equation}
\label{powercond1}
t^{(s_{\Upsilon_1}-i_\Phi)\delta_1} < \Phi_*, \quad t^{(s_{\Gamma_2}-i_\Psi)\delta_2} < \Psi_*,
\end{equation}
\begin{equation}
\label{powercond2}
t^{\delta_1}\circ\overline{\Upsilon}_2^{-1}\circ\Upsilon_2 < \Psi_*, \quad t^{\delta_2}\circ\overline{\Gamma}_1^{-1}\circ\Gamma_1 < \Phi_*.
\end{equation}
\item \label{harnackcond} For all $M>0$ there exists $c_M>0$ such that
\begin{equation*}
f(s,t) \geq c_M t^{\nu_2} \quad \mbox{and} \quad g(s,t) \geq c_M s^{\nu_1}
\end{equation*}
for all $s,t\in(0,M]$ and opportune $\nu_1,\nu_2>0$ satisfying
\begin{equation}
\label{exponents}
\nu_1\nu_2<\min\left\{\frac{i_\Phi}{s_{\overline{\Phi}}},\frac{N(i_\Phi-1)}{N-i_\Phi}\right\}\min\left\{\frac{i_\Psi}{s_{\overline{\Psi}}},\frac{N(i_\Psi-1)}{N-i_\Psi}\right\}.
\end{equation}
\item \label{weightscond} The weights $h,k:\R^N\to\R_+$ belong to $L^1(\R^N) \cap L^\infty(\R^N)$ and satisfy $h(x),k(x)\to 0$ as $|x|\to+\infty$; in addition, for every $r>0$ there exists $c_r>0$ such that $\min\{h,k\} \geq c_r$ in $B_r$.
\end{enumerate}
\begin{enumerate}[label={${\rm (H_3)(\roman*)}$}]
\hypertarget{H3}{}
\item \label{decaycond} There exist $\theta_1>N+\alpha\frac{N-i_\Phi}{i_\Phi-1}$ and $\theta_2>N+\beta\frac{N-i_\Psi}{i_\Psi-1}$ such that
\begin{equation*}
h(x) \leq C|x|^{-\theta_1} \quad \mbox{and} \quad k(x) \leq C|x|^{-\theta_2} \quad \mbox{for all} \;\; x\in\R^N.
\end{equation*}
\item \label{diazsaacond} The functions $s\mapsto\frac{f(s,t)}{s^{i_\phi}}$ and $t\mapsto\frac{g(s,t)}{t^{i_\psi}}$ are strictly decreasing for all $t\in\R_+$ and all $s\in\R_+$, respectively.
\end{enumerate}

\begin{rmk}
Let us briefly comment the hypotheses.
\begin{itemize}
\item \ref{ellipticity} is the ellipticity of the principal parts of the operators in \eqref{prob};
\item \ref{varstruct} represents the variational structure of \eqref{prob};
\item \ref{growthcond} is a $(\Phi,\Psi)$-sub-linear growth condition (in particular, \eqref{prodcond} pertains the mixed terms), ensuring the coercivity of the energy functional (see Lemma \ref{functprops});
\item \ref{degiorgicond} is a sub-criticality condition on the pure terms (see \eqref{powercond1}) and on the mixed terms (see \eqref{powercond2}), exploited to infer $L^\infty$ estimates (see Lemma \ref{supest} and Remark \ref{ondegiorgicond});
\item \ref{harnackcond} is a condition on the lower bounds of $f,g$, which implies a local lower bound for solutions to \eqref{prob} (see Lemma \ref{belowest} and Remark \ref{onharnackcond});
\item \ref{weightscond} summarizes the main properties of the weights $h,k$;
\item \ref{decaycond} is a quantitative information on the decay of the weights, used in Lemma \ref{decay};
\item \ref{diazsaacond} is a D\'iaz-Sa\'a type condition employed in Theorem \ref{uniqueness} (see Remark \ref{ondiazsaacond}).
\end{itemize}
\end{rmk}

\begin{rmk}
\label{ondegiorgicond}
Hypothesis \ref{degiorgicond} is not optimal; it is used only in \eqref{hatfunifbound} and \eqref{improvement2}. The optimal conditions for these arguments, useful also to generalize Lemma \ref{talenti}, are \cite[formula (3.9)]{BCM} and \cite[formulas (2.20)--(2.21)]{C2}, which are given in terms of weak Orlicz spaces and rearrangements, respectively. Anyway, in the power case, i.e., $\Phi(t):=t^p$ and $\Psi(t):=t^q$, \eqref{powercond1} reduces to the $p$-sub-critical growth of $\Upsilon_1$, which is the natural condition to meet in order to ensure the validity of Lemma \ref{supest} in the case of a single equation; we can reduce to this case since \eqref{powercond2} allows us to treat the reaction terms in $v$ as a frozen $\hat{f}(x)$ (see Lemma \ref{supest}).
\end{rmk}

\begin{rmk}
\label{onharnackcond}
Hypothesis \ref{harnackcond} comes mainly from the weak Harnack inequality \cite[Theorem 1.4]{BHHK} and it is used only in Lemma \ref{belowest}; in the case that $i_\Phi'=s_{\overline{\Phi}}$ and $i_\Psi'=s_{\overline{\Psi}}$ hold true (for instance in the power case), \eqref{exponents} is equivalent to the (more restrictive, in general) condition
\begin{equation*}
\nu_1\nu_2<(i_\Phi-1)(i_\Psi-1),
\end{equation*}
exploited to obtain estimates from below in the $p$-Laplacian case (cf., for instance, \cite[Lemma 3.8]{GMM}).
\end{rmk}

\begin{rmk}
\label{ondiazsaacond}
Hypotheses \ref{decaycond}--\ref{diazsaacond} are used only in the uniqueness result: see Theorem \ref{uniqueness}. Here we emphasize the fact that the condition \ref{diazsaacond} is optimal, even for $(p,q)$-Laplacian problems in bounded domains (say $\Omega$), in the sense that the indices $i_\phi$ and $i_\psi$ cannot be increased. Indeed, consider the problem
\begin{equation*}
\left\{
\begin{alignedat}{2}
-\Delta_p u -\Delta_q u &= \lambda u_+^{r-1} \quad &&\mbox{in} \;\; \Omega, \\
u&=0 \quad &&\mbox{on}\;\; \partial \Omega,
\end{alignedat}
\right.
\end{equation*}
being $1<p<r<q<+\infty$ and $\lambda>0$ large enough: the energy functional associated with this problem is coercive, has a local minimizer in $u=0$, and a global minimizer $u^*\neq0$, so there exists a critical point $\overline{u}$ given by the mountain pass theorem (see, e.g., \cite[Theorem 5.40]{MMP}), implying that the problem has two nontrivial solutions $u^*,\overline{u}$.
\end{rmk}

We highlight that problem \eqref{prob} possesses several combined features, including:
\begin{itemize}
\item the principal parts of the operators have unbalanced growth;
\item the reaction terms possess natural growth with respect to the principal parts;
\item the problem is set in the whole space $\R^N$;
\item a pointwise decay is required.
\end{itemize}
These features compel us to face several issues, exploiting in particular a suitable functional framework and some fine a priori estimates. More precisely, the unbalanced growth of both the operators and the reaction, as well as the setting $\R^N$, requires the usage of the Beppo Levi-Orlicz spaces (see Section 2 below); on the other hand, the natural decay of functions in such spaces is only measure-theoretical, in the sense that
\begin{equation*}
u\in \mathcal{D}^{1,\Phi}_0(\R^N) \quad \Rightarrow \quad |\{x\in\R^N: \, |u(x)| \geq \eps\}| < +\infty \quad \forall \eps>0.
\end{equation*}
Accordingly, we have to bound the solutions, from both above and below, and compare them with appropriate decaying radial functions, in order to prove a pointwise decay of the solutions.

Firstly, we obtain decay estimates for problems patterned after \eqref{prob}, under hypothesis \ref{ellipticity}. Following an argument developed for the $p$-Laplacian by \'Avila and Brock \cite{AB}, we construct radial solutions $w=w(|x|)$ to problems driven by the $\Phi$-Laplacian operator, and prove the asymptotic estimate
\begin{equation*}
w(x) \sim \int_{|x|}^{+\infty} \phi^{-1}(s^{1-N}) \ds \quad \mbox{as} \quad |x|\to+\infty,
\end{equation*}
which is consistent with the case of the $p$-Laplacian operator (see Remark \ref{decayest}).

Secondly, we show the existence of a weak solution to \eqref{prob}, provided \hyperlink{H1}{${\rm (H_1)}$}--\hyperlink{H2}{${\rm (H_2)}$} are satisfied. The proof consists of three steps: (i) regularization of the problem; (ii) a priori estimates; (iii) any distributional solution is a weak one. Step (i) is performed by the shifting technique, reducing the analysis to regular (i.e., non-singular) problems \eqref{regularprob}, whose existence of solutions is ensured by the direct methods of Calculus of Variations (Theorem \ref{regexistence}). Step (ii) provides uniform a priori estimates, in order to gain compactness and pass to the limit $\eps\to 0^+$: after proving energy estimates (see Lemma \ref{energyest}), we show that the solutions are essentially bounded via modern, general versions of Talenti's rearrangement argument \cite{C2} and De Giorgi's technique \cite{BCM} (vide Lemmas \ref{talenti}--\ref{supest}), and then we produce local bounds from below by exploiting a recent weak Harnack inequality in Orlicz spaces \cite{BHHK} (see Lemma \ref{belowest}). The estimates in step (ii) allow to get a distributional solution to \eqref{prob} (Lemma \ref{distrsol}), which is actually a weak one by step (iii), performed by using a regularization-localization argument (Theorem \ref{weaksol}).

Finally, supposing also \hyperlink{H3}{${\rm (H_3)}$}, we prove uniqueness of weak solutions by adapting an argument by D\'iaz and Sa\'a \cite{DS} (cf.~also \cite{CDS}); see Theorem \ref{uniqueness}. We point out that, unlike the aforementioned papers, here the operator is not homogeneous and we treat systems instead of single equations. The proof exploits the previous information regarding both the decay estimate and the sup-bounds.

Singular equations in exterior domains are a good starting point to investigate problems in the whole space: the former have been studied in \cite{KS}, which investigates asymptotically linear nonlinearities in the semi-linear case, \cite{CDS}, analyzing model singular nonlinearities in the $p$-Laplacian case, and \cite{CF}, that generalizes the previous result to strongly singular nonlinearities. Pertaining singular equations in the whole space, we limit ourselves to cite here only \cite{GS}; other results can be found in the survey \cite{GLM1}.

One of the first contributions in the topic of singular quasi-linear systems is represented by \cite{MMM}, which studies singularities of type $s^{-\alpha}$ (resp., $t^{-\gamma}$ for the second equation) instead of the model case $s^{-\alpha}t^{\beta}$ (resp., $s^{\gamma}t^{-\delta}$). A systematic treatment of particular parametric singular systems was furnished in \cite{SARZ}. A more recent result, regarding singular convective systems on $\R^N$, has been obtained in \cite{GMM}. For further contributions, we address the reader to the survey \cite{GLM2}, in which the present work was announced (see \cite[p.12]{GLM2}).

Another relevant aspect of the present work is represented by the Beppo Levi-Orlicz setting, which generalizes some previous results in different directions. Existence of solutions for a singular (and convective, that is, the reaction depends on the gradient of the solution) problem set in $\R^N$ and driven by an operator patterned after the $(p,q)$-Laplacian can be found in \cite{GG}. Existence results of solutions to Dirichlet problems in bounded domains driven by the more general $\Phi$-Laplacian (thus involving Sobolev-Orlicz spaces) can be found, for instance, by \cite{CGL} (see also \cite{CGP}) and \cite{CGSS} for equations, as well as \cite{GCS} for systems (in which the authors prove also uniqueness of solutions via a D\'iaz-Sa\'a type argument); here we have reached similar results in the whole $\R^N$. Nonetheless, we have proved the decay of the fundamental solution of the $\Phi$-Laplacian (as well as the solution to $-\Delta_\Phi u = \mu|x|^{-l}$ in $\R^N\setminus\{0\}$, being $\mu>0$ and $l>N$; see Lemmas \ref{radialsub}--\ref{radialsuper} below), extending the results for the $p$-Laplacian operator given by \cite{AB}. For an introduction to Orlicz spaces we refer to \cite{KJF,M,RR,HH}; we extensively used some optimal results about the Sobolev conjugate of a Young function and the related embedding \eqref{embedding}, a Talenti type rearrangement argument, and the boundess of solutions, obtained by Cianchi et al. \cite{C,C2,BCM}.

\begin{ex}[The power case]
\label{powercase}
Consider $\Phi(t):=t^p$, $\Psi(t):=t^q$,
\begin{equation*}
\begin{split}
f(u,v) := (1-\alpha)u^{-\alpha}v^{1-\beta} + r_1 u^{r_1-1} + \hat{r} u^{\hat{r}-1}v^{\hat{s}}, \\
g(u,v) := (1-\beta)u^{1-\alpha}v^{-\beta} + s_2 v^{s_2-1} + \hat{s} u^{\hat{r}}v^{\hat{s}-1}, \\
\end{split}
\end{equation*}
being $0<\alpha,\beta<1\leq \hat{r},\hat{s}$ and $r_1,s_2>1$; suppose also that there exist $r_2,s_1\geq 2$ such that
\begin{equation}
\label{youngcond}
\frac{\hat{r}-1}{r_1-1}+\frac{\hat{s}}{r_2-1}\leq 1, \quad \frac{\hat{r}}{s_1-1}+\frac{\hat{s}-1}{s_2-1}\leq 1.
\end{equation}
Moreover, take $h$ and $k$ to be equal to $w:=\frac{1}{1+|x|^\theta}$, $\theta>0$. Setting $\Upsilon_i(t):=t^{r_i}$, $\Gamma_i(t):=t^{s_i}$, $i=1,2$, and
\begin{equation*}
H(x,u,v) := w(x)\left[ u^{1-\alpha}v^{1-\beta} + u^{r_1} + v^{s_2} + u^{\hat{r}}v^{\hat{s}} \right],
\end{equation*}
we have:
\begin{itemize}
\item \ref{ellipticity} is satisfied if and only if $1<p,q<N$;
\item \ref{varstruct} is satisfied;
\item \ref{growthcond} is satisfied if and only if $r_1<p$ and $s_2<q$, as well as $p'<\frac{q}{r_2-1}$ and $q'<\frac{p}{s_1-1}$;
\item \ref{degiorgicond} is satisfied if and only if $\max\{(N-q)(r_2-1),(N-p)(s_1-1)\}<pq$;
\item \ref{harnackcond} is satisfied if and only if $(1-\alpha)(1-\beta)<(p-1)(q-1)$;
\item \ref{weightscond} is satisfied;
\item \ref{decaycond} is satisfied if and only if $\theta>N+\max\left\{\alpha\frac{N-p}{p-1},\beta\frac{N-q}{q-1}\right\}$;
\item \ref{diazsaacond} is satisfied if and only if $r_1<p$ and $s_2<q$.
\end{itemize}
In particular, in order to verify \ref{growthcond}, apply Young's inequality to the terms $u^{\hat{r}-1}v^{\hat{s}}$ and $u^{\hat{r}}v^{\hat{s}-1}$, using \eqref{youngcond}.
\end{ex}
For instance, all the hypotheses are verified provided $N\geq 3$, $0<\alpha<\hat{r}=1<r_1<p<N$, and $0<\beta<\hat{s}=1<s_2<q<N$, by taking $r_2=s_1=2$, with either:
\begin{itemize}
\item $p,q>\max\left\{\frac{\sqrt{1+4N}-1}{2},2\right\}$ and $\theta>2(N-1)$;
\item $p>N-1$ and $q>\frac{N-1}{N-2}$, as well as $\theta>(N-1)^2$.
\end{itemize}

\begin{ex}[The non-power case]
Consider $\Phi(t):=t^p \log^{\hat{\sigma}}(1+t)$ and $\Psi(t):=t^q \log^{\hat{\tau}}(1+t)$, with $\hat{\sigma},\hat{\tau}\in[0,1)$, $N\geq 2$, $p\in(1,N-\hat{\sigma})$, $q\in(1,N-\hat{\tau})$, $q\geq p'$ (e.g., $p,q\geq 2$) and $\min\{p+qq',q+pp'\}>N$ (e.g., $p,q>N-4$). Moreover, for $\sigma\in[0,\hat{\sigma})$ and $\tau\in[0,\hat{\tau})$, define
\begin{equation*}
\begin{alignedat}{2}
&\Upsilon_1:=t^p \log^\sigma(1+t), \quad &&\Upsilon_2:=\int_0^t y_2(\tau) \dtau, \\
&\Gamma_1:=\int_0^t \gamma_1(\tau) \dtau, \quad &&\Gamma_2:=t^q \log^\tau(1+t),
\end{alignedat}
\end{equation*}
where $y_2,\gamma_1\in C^1(\R_+)$ are chosen such that
\begin{equation*}
y_2(t):=\left\{
\begin{alignedat}{2}
&t, \quad &&\mbox{if} \;\; t\in(0,1), \\
&\mbox{increasing}, \quad &&\mbox{if} \;\; t\in(1,2), \\
&t^{\frac{q}{p'}}\log^{\frac{\hat{\tau}}{p'}}(1+t), \quad &&\mbox{if} \;\; t\in(2,+\infty),
\end{alignedat}
\right.
\quad
\gamma_1(t):=\left\{
\begin{alignedat}{2}
&t, \quad &&\mbox{if} \;\; t\in(0,1), \\
&\mbox{increasing}, \quad &&\mbox{if} \;\; t\in(1,2), \\
&t^{\frac{p}{q'}}\log^{\frac{\hat{\sigma}}{q'}}(1+t), \quad &&\mbox{if} \;\; t\in(2,+\infty),
\end{alignedat}
\right.
\end{equation*}
Take
\begin{equation*}
\begin{split}
f(u,v) := (1-\alpha)u^{-\alpha}\Upsilon_2'(v) + \Upsilon_1'(u) + \Gamma_1''(u)v^{1-\beta}, \\
g(u,v) := (1-\beta)\Gamma_1'(u)v^{-\beta} + \Gamma_2'(v) + u^{1-\alpha}\Upsilon_2''(v), \\
\end{split}
\end{equation*}
being $\alpha\in(\frac{1}{q},1)$ and $\beta\in(\frac{1}{p},1)$. Finally, set $h$ and $k$ to be equal to $w:=\frac{1}{1+|x|^\theta}$ with $\theta>N+\max\left\{\alpha\frac{N-p}{p-1},\beta\frac{N-q}{q-1}\right\}$. Then all the hypotheses except \ref{diazsaacond} are satisfied with $\nu_1=\nu_2=1$ and
\begin{equation*}
H(x,u,v) := w(x)\left[ \Upsilon_1(u) + \Gamma_2(v) + u^{1-\alpha}\Upsilon_2'(v) + \Gamma_1'(u)v^{1-\beta} \right],
\end{equation*}
provided $\hat{\sigma}<\min\left\{\frac{\beta p-1}{q-1},\frac{p^2}{N-p},\frac{p(qq'-N+p)}{N-p}\right\}$ and $\hat{\tau}<\min\left\{\frac{\alpha q-1}{p-1},\frac{q^2}{N-q},\frac{q(pp'-N+q)}{N-q}\right\}$. Anyway, these smallness conditions can be removed by using, for all the Young functions involved, the indices at infinity \cite[pp.26-27]{RR} instead of the ones defined in \eqref{indices}.

It is worth observing that, if $\sigma,\tau>0$, the functions $\Upsilon_i,\Gamma_i$, $i=1,2$, cannot be replaced with power functions, because of \ref{growthcond}: indeed,
\begin{equation*}
\Upsilon_1(t) = t^p \log^\sigma(1+t) < t^r < t^p \log^{\hat{\sigma}}(1+t) = \Phi(t) \quad \mbox{is false for all} \; r>1,
\end{equation*}
as well as (cf. \eqref{prodcond})
\begin{equation*}
(\overline{\Upsilon_2}^{-1} \circ \Upsilon_2)(t) \simeq y_2(t) = t^{\frac{q}{p'}}\log^{\frac{\hat{\tau}}{p'}}(1+t) < t^r < (\overline{\Phi}^{-1}\circ\Psi)(t) \quad \mbox{is false for all} \; r>1,
\end{equation*}
and the same holds for $\Gamma_2,\Upsilon_1$.

To guarantee also \ref{diazsaacond}, it suffices to consider
\begin{equation*}
\begin{split}
f(u,v) := (1-\alpha)u^{-\alpha}\Upsilon_2'(v) + \Gamma_1''(u)v^{1-\beta}, \\
g(u,v) := (1-\beta)\Gamma_1'(u)v^{-\beta} + u^{1-\alpha}\Upsilon_2''(v), \\
\end{split}
\end{equation*}
choosing
\begin{equation*}
\begin{split}
\Upsilon_1(t):=t^r, \quad \Upsilon_2(t):=t^{\tilde{r}+2}\log^{\tau}(1+t), \\
\Gamma_1(t):=t^{\tilde{s}+2}\log^{\sigma}(1+t), \quad \Gamma_2(t):=t^s, \\
\end{split}
\end{equation*}
with $p,q>2$, $\max\{N-p,N-q\}<pq$, and $\tilde{r},\tilde{s},\sigma,\tau>0$ small enough, as well as $1<r<p$, $1<s<q$ such that
\begin{equation*}
\frac{\tilde{r}+\sigma}{r-1} + \frac{1-\beta}{\tilde{s}+\tau+1} \leq 1, \quad \frac{1-\alpha}{\tilde{r}+\sigma+1} + \frac{\tilde{s}+\tau}{s-1} \leq 1,
\end{equation*}
which parallels \eqref{youngcond} of the power case (see Example \ref{powercase}). The smallness conditions for $\tilde{r},\tilde{s},\sigma,\tau$ can be derived as in Example \ref{powercase}, using the indices $s_{\Upsilon_2}$ and $s_{\Gamma_1}$ (see \eqref{indices}).
\end{ex}

\section{Preliminaries}

\subsection{Notations}
Given $r>1$, we set $r':=\frac{r}{r-1}$ and, provided $r<N$, $r^*:=\frac{Nr}{N-r}$, called respectively Young and Sobolev conjugates of $r$. The symbol $\R_+$ stands for $(0,+\infty)$, while $B_r^e$ indicates the set $\R^N\setminus \overline{B}_r$, where $\overline{B}_r$ is the closure of the open ball $B_r$ centered at the origin and of radius $r$. The characteristic function of a set $A$ will be denoted by $\chi_A$; if $A\subseteq \R^N$, we write $|A|$ for the $N$-dimensional Lebesgue measure of $A$. We indicate with $f\circ g$ the composition between the functions $f$ and $g$, that is, $(f\circ g)(t):=f(g(t))$ for all $t$. If $f,g:\R^N\to\R^M$, $M\geq 1$, then $f*g$ stands for the convolution between $f$ and $g$, i.e., $(f*g)(x):=\int_{\R^N} f(y)g(x-y)\dy$ for all $x\in\R^N$.

Given $u: \R^N \to \R$ and $k \in \R$, we define its positive and negative parts as $u_+$ and $u_-$. The set $\Omega_k:=\{x\in\R^N: \, u(x)>k\}$ is the super-level set of $u$ at level $k$, when the dependence on $u$ is clear. The symbol $u^{**}$ stands for the average of $u^*$, that is, $u^{**}(t) := \frac{1}{t}\int_0^t u^*(r) \dr$ (see Definition \ref{rearrangdef}).

The space $C^\infty_c(\R^N)$ is the set of the compactly supported test functions, endowed with its standard topology, and $\{\rho_n\}\subseteq C^\infty_c(\R^N)$ denotes a sequence of standard mollifiers (vide, e.g., \cite[pp.108-111]{B}). We write $z\in Z_{\rm loc}(\R^N)$ if for every nonempty compact subset $ K $ of $\R^N$ the restriction $z_{\mid_K}$ belongs to $Z(K)$. Similarly, a sequence $\{z_n\}\subseteq Z_{\rm loc}(\R^N)$ is called bounded in $ Z_{\rm loc}(\R^N) $ once the same holds for $ \{z_{n\mid_K}\} $ in $ Z(K) $, with any $K$ as above. Whenever the domain of integration is clear, for any $p\in[1,+\infty]$ and Young function $\Lambda$ the norms $\|\cdot\|_p$ and $\|\cdot\|_\Lambda$ stand for the usual $L^p$ and $L^\Lambda$ norms, respectively.

In the whole paper, the symbol $C$ will denote a positive constant which may change its value at each passage. Dependencies of $C$ will be indicated from time to time, and some of them will be emphasized using subscripts (for instance, $C_\sigma$ depends on various quantities, and in particular on $\sigma$). To avoid technicalities, we write `for all $x$' instead of `for almost all $x$'.

\subsection{Rearrangements, Young functions, and Orlicz spaces}

\begin{defi}
\label{rearrangdef}
Let $f:\R^N\to\R$ be a measurable function. The non-increasing rearrangement of $f$ is $f^*:\R_+\to[0,+\infty]$ defined as
\begin{equation*}
f^*(r) := \inf\{t>0: \mu_f(t)\leq r\} \quad \forall r\geq 0,
\end{equation*}
where $\mu_f:[0,+\infty)\to[0,+\infty)$ is the distribution function of $f$, that is,
\begin{equation*}
\mu_f(t) := |\{x\in\R^N: \, |f(x)|>t\}| \quad \forall t\geq 0.
\end{equation*}
\end{defi}

In view of \cite[Lemmas 1.8.10 and 1.8.12]{Zi}, for all $p\in (1,+\infty)$ we have
\begin{equation}
\label{birearrang}
f^{**}(t) \leq p' t^{-\frac{1}{p}} \|f\|_p \quad \forall t\geq 0.
\end{equation}

\begin{defi}
Let $\Lambda_1,\Lambda_2$ be two functions. We write $\Lambda_1 < \Lambda_2$ if there exist $c,T>0$ such that
\begin{equation*}
\Lambda_1(t) \leq \Lambda_2(ct) \quad \forall t \geq T.
\end{equation*}
We write $\Lambda_1 \ll \Lambda_2$ if 
\begin{equation*}
\lim_{t \to +\infty} \frac{\Lambda_1(t)}{\Lambda_2(\eta t)} = 0 \quad \forall \eta > 0.
\end{equation*}
\end{defi}
We recall that
\begin{equation*}
s_{\Psi_1} < i_{\Psi_2} \quad \Rightarrow \quad \Psi_1 \ll \Psi_2 \quad \Rightarrow \quad \Psi_1 < \Psi_2,
\end{equation*}
and the reverse implications are generally false.

\begin{defi}
A function $\Lambda:[0,+\infty) \to [0,+\infty)$ is called Young function if it is convex, $\Lambda(t)=0$ if and only if $t=0$, and the following holds true:
\begin{equation}
\label{Nfunct}
\lim_{t \to 0^+} \frac{\Lambda(t)}{t} = 0, \quad \lim_{t \to +\infty} \frac{\Lambda(t)}{t} = +\infty.
\end{equation}
\end{defi}

\begin{defi}
\label{conj}
Let $\Lambda$ be a Young function. We denote by $\overline{\Lambda}$ the Young conjugate of $\Lambda$, defined as
\begin{equation*}
\overline{\Lambda}(t) := \max_{s \geq 0} \{st-\Lambda(s)\} \quad \forall t \geq 0.
\end{equation*}
\end{defi}

Incidentally, we recall (cf. \cite[p.10]{RR}) that
\begin{equation}
\label{invder}
\overline{\Lambda}' = (\Lambda')^{-1}
\end{equation}
for any Young function $\Lambda$.

The function $\Lambda_*$ in the following definition was introduced by Cianchi in \cite{C} and, in an equivalent form, in \cite{C0}.

\begin{defi}
Let $\Lambda$ be a Young function satisfying
\begin{equation}
\label{cianchicond}
\int_1^{+\infty} \left(\frac{t}{\Lambda(t)}\right)^{N'-1} \dt = +\infty.
\end{equation}
We indicate with $\Lambda_*$ the Sobolev conjugate of $\Lambda$, defined as $\Lambda_*:=\Lambda\circ \mathscr{H}^{-1}$, where $\mathscr{H}$ is
\begin{equation*}
\mathscr{H}(t) := \left(\int_0^t \left(\frac{\tau}{\Lambda(\tau)}\right)^{N'-1} \dtau\right)^{\frac{1}{N'}}.
\end{equation*}
\end{defi}

\begin{defi}
\label{delta2}
Let $\Lambda$ be a Young function. We write $\Lambda \in \Delta_2$ if there exist $k,T>0$ such that
\begin{equation*}
\Lambda(2t) \leq k\Lambda(t) \quad \forall t \geq T.
\end{equation*}
We write $\Lambda \in \nabla_2$ if there exist $\eta>1$ and $T>0$ such that
\begin{equation*}
\Lambda(t) \leq \frac{1}{2\eta}\Psi(\eta t) \quad \forall t \geq T.
\end{equation*}
\end{defi}

Let $\Lambda \in \Delta_2$. Then the Orlicz class
\begin{equation*}
L^\Lambda(\R^N) := \left\{ u:\R^N \to \R \; \mbox{measurable:} \, \int_{\R^N} \Lambda(|u(x)|) \dx < +\infty \right\}
\end{equation*}
becomes a Banach spaces when equipped with the Luxembourg norm
\begin{equation*}
\|u\|_{L^\Lambda(\R^N)} := \inf \left\{ \tau>0: \, \int_{\R^N} \Lambda\left(\frac{|u(x)|}{\tau}\right) \dx \leq 1 \right\}.
\end{equation*}
Analogously (cf. \cite{V}), we will consider the weighted Orlicz spaces
\begin{equation*}
L^\Lambda(\R^N;w) := \left\{ u:\R^N \to \R \; \mbox{measurable:} \, \int_{\R^N} w\Lambda(|u(x)|) \dx < +\infty \right\}
\end{equation*}
equipped with the Luxembourg norm
\begin{equation*}
\|u\|_{L^\Lambda(\R^N;w)} := \inf \left\{ \tau>0: \, \int_{\R^N} w\Lambda\left(\frac{|u(x)|}{\tau}\right) \dx \leq 1 \right\}.
\end{equation*}

Suppose that $\Lambda$ is a Young function whose indices $i_\Lambda,s_\Lambda$ obey
\begin{equation}
\label{indices}
1 < i_\Lambda := \inf_{t>0} \frac{t\Lambda'(t)}{\Lambda(t)} \leq \sup_{t>0} \frac{t\Lambda'(t)}{\Lambda(t)} =: s_\Lambda < +\infty,
\end{equation}
which implies $\Lambda \in \Delta_2 \cap \nabla_2$. We define the functions $\underline{\zeta}_\Lambda,\overline{\zeta}_\Lambda:[0,+\infty) \to [0,+\infty)$ as
\begin{equation*}
\underline{\zeta}_\Lambda(t) := \min\{t^{i_\Lambda},t^{s_\Lambda}\}, \quad \overline{\zeta}_\Lambda(t) := \max\{t^{i_\Lambda},t^{s_\Lambda}\}.
\end{equation*}
One has (cf. \cite[Lemma 2.1]{FIN})
\begin{equation}
\label{factor}
\underline{\zeta}_\Lambda(k) \Lambda(t) \leq \Lambda(kt) \leq \overline{\zeta}_\Lambda(k) \Lambda(t) \quad \forall k,t \geq 0
\end{equation}
and
\begin{equation*}
\begin{split}
&\underline{\zeta}_\Lambda(\|w\|_{L^\Lambda(\R^N)}) \leq \int_{\R^N} \Lambda(|w(x)|) \dx \leq \overline{\zeta}_\Lambda(\|w\|_{L^\Lambda(\R^N)})
\end{split}
\end{equation*}
for all $w \in L^\Lambda(\R^N)$. We also recall (see \cite[Lemmas 2.4-2.5]{FIN}) that
\begin{equation}
\label{youngind}
s_\Lambda' \leq i_{\overline{\Lambda}} \leq s_{\overline{\Lambda}} \leq i_\Lambda'
\end{equation}
and, provided $s_\Lambda<N$,
\begin{equation}
\label{sobind}
i_\Lambda^* \leq i_{\Lambda_*} \leq s_{\Lambda_*} \leq s_\Lambda^*.
\end{equation}

In the sequel we will use the following inequalities, that are consequence of \eqref{conj}, \eqref{delta2}, and \eqref{indices}; cf. \cite{RR}. Here $\Lambda,\Lambda_1,\Lambda_2$ are Young functions ($\Lambda_1(t)=t$ is allowed).
\begin{equation}
\label{strictdom}
\Lambda_1(t) \leq \sigma \Lambda_2(t) + C_\sigma \quad \forall t,\sigma>0, \quad \mbox{provided} \;\; \Lambda_1\ll\Lambda_2.
\end{equation}
\begin{equation}
\label{fundineq0}
t \leq \Lambda^{-1}(t) \overline{\Lambda}^{-1}(t) \leq 2t \quad \forall t>0.
\end{equation}
\begin{equation}
\label{fundineq}
\Lambda(t) \leq t\overline{\Lambda}^{-1}(\Lambda(t)) \leq 2\Lambda(t) \quad \forall t>0.
\end{equation}
\begin{equation}
\label{fundineq2}
\overline{\Lambda}\left(\frac{\Lambda(t)}{t}\right) \leq \Lambda(t) \quad \forall t>0.
\end{equation}
\begin{equation}
\label{subadd}
\Lambda(s+t) \leq C(\Lambda(s)+\Lambda(t)) \quad \forall s,t>0.
\end{equation}
\begin{equation}
\label{invgrowth}
\Lambda^{-1}(t) \leq C t^{\frac{1}{i_\Lambda}} \quad \forall t>1.
\end{equation}

Suppose \eqref{cianchicond}. We introduce the Beppo Levi-Orlicz spaces $\mathcal{D}^{1,\Lambda}_0(\R^N)$ as the closure of $C^\infty_c(\R^N)$ under the norm
\begin{equation*}
\|u\|_{\mathcal{D}^{1,\Lambda}_0(\R^N)} := \|\nabla u\|_{L^\Lambda(\R^N)}.
\end{equation*}
Equivalently,
\begin{equation*}
\mathcal{D}^{1,\Lambda}_0(\R^N) = \{u\in L^{\Lambda_*}(\R^N): \, |\nabla u|\in L^\Lambda(\R^N)\}.
\end{equation*}
Indeed, the following continuous embedding holds true (vide \cite[p.1634]{C}):
\begin{equation}
\label{embedding}
\mathcal{D}^{1,\Lambda}_0(\R^N) \hookrightarrow L^{\Lambda_*}(\R^N).
\end{equation}

We consider the following function spaces:
\begin{equation*}
X := \mathcal{D}^{1,\Phi}_0(\R^N) \times \mathcal{D}^{1,\Psi}_0(\R^N),
\end{equation*}
\begin{equation*}
Y := L^{\Phi_*}(\R^N) \times L^{\Psi_*}(\R^N).
\end{equation*}

\begin{defi}
A couple $(u,v)\in X$, $u,v>0$ in $\R^N$, is called weak solution to 
\begin{equation*}
\left\{
\begin{alignedat}{2}
-\Delta_{\Phi} u &= h(x)f(u,v) &&\quad \mbox{in}\;\; \R^N, \\
-\Delta_{\Psi} v &= k(x)g(u,v) &&\quad \mbox{in}\;\; \R^N, \\
\end{alignedat}
\right.
\end{equation*}
if,
for any $(\xi,\nu)\in X$,
\begin{equation}
\label{defsol}
\begin{split}
\int_{\R^N} \frac{\phi(|\nabla u|)}{|\nabla u|} \nabla u \nabla \xi \dx = \int_{\R^N} h(x)f(u,v) \xi \dx, \\
\int_{\R^N} \frac{\psi(|\nabla v|)}{|\nabla v|} \nabla v \nabla \nu \dx = \int_{\R^N} k(x)g(u,v) \nu \dx. \\
\end{split}
\end{equation}
\end{defi}

\begin{defi}
A couple $(u,v)\in X$ is called distributional solution to \eqref{prob} if \eqref{defsol} holds true for any $(\xi,\nu)\in C^\infty_c(\R^N)\times C^\infty_c(\R^N)$.
\end{defi}


\subsection{Useful propositions}
\begin{prop}[Pratt's lemma]
\label{pratt}
Let $ (\Omega,\mu,\mathscr{F}) $ be a measure space. Suppose $ \{f_n\} $, $ \{g_n\} $, $ \{h_n\} $ to be sequences of measurable functions such that
\begin{equation*}
\begin{alignedat}{2}
&f_n \to f \quad \mu\mbox{-a.e. in} \;\; \Omega, \quad &&\lim_{n \to \infty} \int_\Omega f_n \, {\rm d}\mu = \int_\Omega f \, {\rm d}\mu \in \R, \\
&h_n \to h \quad \mu\mbox{-a.e. in} \;\; \Omega, \quad \quad &&\lim_{n \to \infty} \int_\Omega h_n \, {\rm d}\mu = \int_\Omega h \, {\rm d}\mu \in \R, \\
&g_n \to g \quad \mu\mbox{-a.e. in} \;\; \Omega, \quad &&f_n \leq g_n \leq h_n \quad \mu\mbox{-a.e. in} \;\; \Omega.
\end{alignedat}
\end{equation*}
Then
\begin{equation*}
\lim_{n \to \infty} \int_\Omega g_n \, {\rm d}\mu = \int_\Omega g \, {\rm d}\mu \in \R.
\end{equation*}
\end{prop}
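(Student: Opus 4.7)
The plan is to reduce Pratt's lemma to two applications of Fatou's lemma, carried out respectively on the nonnegative sequences $g_n-f_n$ and $h_n-g_n$. As a preliminary step I would confirm integrability: the hypothesis $\int_\Omega f_n\, {\rm d}\mu\to\int_\Omega f\, {\rm d}\mu\in\R$ (and analogously for $h_n$) forces each $f_n,h_n$ to be integrable, so the pointwise bound $|g_n|\leq|f_n|+|h_n|$ makes each $g_n$ integrable; passing the a.e.\ inequality $f_n\leq g_n\leq h_n$ to the limit gives $f\leq g\leq h$ a.e., and the same reasoning makes $g$ integrable.

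Since $g_n-f_n\geq 0$ converges a.e.\ to $g-f$, Fatou's lemma yields
\[
\int_\Omega(g-f)\, {\rm d}\mu\leq\liminf_{n\to\infty}\int_\Omega(g_n-f_n)\, {\rm d}\mu=\liminf_{n\to\infty}\int_\Omega g_n\, {\rm d}\mu-\int_\Omega f\, {\rm d}\mu,
\]
where the final equality splits the liminf of a sum in which $\int_\Omega f_n\, {\rm d}\mu$ is a \emph{convergent} sequence. Rearranging delivers the lower bound $\int_\Omega g\, {\rm d}\mu\leq\liminf_n\int_\Omega g_n\, {\rm d}\mu$.

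Symmetrically, applying Fatou to $h_n-g_n\geq 0$, which converges a.e.\ to $h-g\geq 0$, produces
\[
\int_\Omega(h-g)\, {\rm d}\mu\leq\liminf_{n\to\infty}\int_\Omega(h_n-g_n)\, {\rm d}\mu=\int_\Omega h\, {\rm d}\mu-\limsup_{n\to\infty}\int_\Omega g_n\, {\rm d}\mu,
\]
via the identity $\liminf(-a_n)=-\limsup a_n$ together with the convergence of $\int_\Omega h_n\, {\rm d}\mu$. Rearranging gives the upper bound $\limsup_n\int_\Omega g_n\, {\rm d}\mu\leq\int_\Omega g\, {\rm d}\mu$. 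Chaining the two bounds forces $\liminf$ and $\limsup$ of $\int_\Omega g_n\, {\rm d}\mu$ to coincide with the finite real number $\int_\Omega g\, {\rm d}\mu$, which is the desired conclusion.

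The only points requiring care, none of which should constitute a serious obstacle, are the bookkeeping of integrability of $g$ and of each $g_n$, and the manipulation of $\liminf$/$\limsup$ of sums in which precisely one summand is a convergent numerical sequence; the argument is otherwise a standard one-line extension of the proof of the dominated convergence theorem.
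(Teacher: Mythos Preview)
Your proposal is correct and follows precisely the same approach as the paper, which simply states that it suffices to apply Fatou's lemma to both $g_n-f_n$ and $h_n-g_n$. Your write-up merely spells out the integrability bookkeeping and the $\liminf$/$\limsup$ manipulations that the paper leaves implicit.
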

\begin{proof}
It suffices to apply Fatou's lemma to both $ g_n-f_n $ and $ h_n-g_n $.
\end{proof}


\begin{lemma}
\label{intind}
Let $\Lambda$ be a Young function of class $C^2$. Set $\lambda:=\Lambda'$ and suppose that
\begin{equation*}
0 < i_\lambda := \inf_{t>0}\frac{t\lambda'(t)}{\lambda(t)} \leq \sup_{t>0}\frac{t\lambda'(t)}{\lambda(t)} =: s_\lambda < +\infty.
\end{equation*}
Then
\begin{equation*}
i_\lambda+1\leq i_\Lambda\leq s_\Lambda\leq s_\lambda+1.
\end{equation*}
\end{lemma}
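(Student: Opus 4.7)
The plan is to show that the function $\varphi(t):=t\lambda(t)/\Lambda(t)$ is pinched between $i_\lambda+1$ and $s_\lambda+1$ pointwise in $t>0$; taking inf and sup on both sides then yields the four inequalities in the statement. Everything boils down to converting the differential information on $\lambda$ (bounds on $t\lambda'/\lambda$) into integral information on $\Lambda=\int_0^\cdot\lambda$.

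First I would observe that the hypothesis rewrites as $i_\lambda\lambda(t)/t\leq \lambda'(t)\leq s_\lambda\lambda(t)/t$ for all $t>0$. Differentiating $t\mapsto t^{-i_\lambda}\lambda(t)$ gives $t^{-i_\lambda-1}(t\lambda'(t)-i_\lambda\lambda(t))\geq 0$, so $t\mapsto \lambda(t)/t^{i_\lambda}$ is non-decreasing on $(0,+\infty)$; analogously, $t\mapsto \lambda(t)/t^{s_\lambda}$ is non-increasing. Consequently, for every fixed $t>0$ and every $s\in(0,t]$,
\begin{equation*}
\lambda(t)\Bigl(\tfrac{s}{t}\Bigr)^{s_\lambda}\leq \lambda(s)\leq \lambda(t)\Bigl(\tfrac{s}{t}\Bigr)^{i_\lambda}.
\end{equation*}

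Since $\Lambda\in C^2$ satisfies \eqref{Nfunct}, one has $\lambda(0)=\Lambda'(0)=0$, hence $\Lambda(t)=\int_0^t\lambda(s)\ds$. Integrating the two-sided bound above from $0$ to $t$ gives
\begin{equation*}
\frac{t\lambda(t)}{s_\lambda+1}\leq \Lambda(t)\leq \frac{t\lambda(t)}{i_\lambda+1}\qquad \forall\,t>0,
\end{equation*}
which is exactly $i_\lambda+1\leq \varphi(t)\leq s_\lambda+1$. Taking infimum and supremum over $t>0$ yields $i_\lambda+1\leq i_\Lambda$ and $s_\Lambda\leq s_\lambda+1$; the middle inequality $i_\Lambda\leq s_\Lambda$ is trivial by definition. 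There is no real obstacle here beyond checking that $\lambda(0)=0$ (so that the integration starts from zero cleanly), which is guaranteed by the $C^2$ regularity of $\Lambda$ together with the Young-function condition $\lim_{t\to 0^+}\Lambda(t)/t=0$.
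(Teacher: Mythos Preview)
Your proof is correct and follows essentially the same strategy as the paper: both arguments establish the pointwise bound $i_\lambda+1\leq t\lambda(t)/\Lambda(t)\leq s_\lambda+1$ by integrating a differential inequality, then take inf and sup. The paper's version is marginally more direct---it observes $(i_\lambda+1)\lambda(s)\leq (s\lambda(s))'\leq (s_\lambda+1)\lambda(s)$ and integrates on $(0,t)$ in one step---whereas you pass through the monotonicity of $\lambda(t)/t^{i_\lambda}$ and $\lambda(t)/t^{s_\lambda}$ first, but the content is the same.
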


\begin{proof}
Observe that
\begin{equation*}
(i_\lambda+1)\lambda(s) \leq \frac{{\rm d}}{{\rm d}s}(s\lambda(s)) \leq (s_\lambda+1)\lambda(s) \quad \forall s>0.
\end{equation*}
Integrating in $(0,t)$ and dividing by $\Lambda(t)$ produces
\begin{equation*}
i_\lambda+1 \leq \frac{t\Lambda'(t)}{\Lambda(t)} \leq s_\lambda+1 \quad \forall t>0,
\end{equation*}
yielding the conclusion.
\end{proof}

\begin{prop}
\label{convex}
Let $\Lambda,\Theta$ be two Young functions of class $C^2$. Denoting by
\begin{equation*}
\begin{split}
&i_\lambda := \inf_{t>0} \frac{t\Lambda''(t)}{\Lambda'(t)}, \quad s_\lambda := \sup_{t>0} \frac{t\Lambda''(t)}{\Lambda'(t)}, \\
&i_\theta := \inf_{t>0} \frac{t\Theta''(t)}{\Theta'(t)}, \quad s_\theta := \sup_{t>0} \frac{t\Theta''(t)}{\Theta'(t)},
\end{split}
\end{equation*}
suppose that
\begin{equation*}
0 < i_\lambda \leq s_\lambda \leq i_\theta \leq s_\theta < +\infty.
\end{equation*}
Then $\Upsilon:=\Theta\circ\Lambda^{-1}$ is convex. If, moreover, $s_\Lambda<i_\Theta$, then $\Upsilon$ is a Young function.
\end{prop}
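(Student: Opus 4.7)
The plan is to compute $\Upsilon'$ explicitly and to read off its monotonicity from the hypothesis $s_\lambda\le i_\theta$; then, for the additional claim, to derive the two limits in \eqref{Nfunct} from the factor estimate \eqref{factor}.

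Since $i_\lambda>0$ forces $\lambda=\Lambda'>0$ on $(0,+\infty)$, the inverse $L:=\Lambda^{-1}$ is well defined, strictly increasing, and of class $C^2$, with $L'=1/(\lambda\circ L)$. Hence
\begin{equation*}
\Upsilon'(t) = \theta(L(t))\,L'(t) = \frac{\theta(L(t))}{\lambda(L(t))}, \qquad t>0.
\end{equation*}
Since $L$ is non-decreasing, to prove that $\Upsilon'$ is non-decreasing it suffices to show that $s\mapsto \theta(s)/\lambda(s)$ is non-decreasing on $(0,+\infty)$. Writing
\begin{equation*}
\left(\frac{\theta}{\lambda}\right)'(s) = \frac{\theta(s)}{s\,\lambda(s)}\left(\frac{s\theta'(s)}{\theta(s)} - \frac{s\lambda'(s)}{\lambda(s)}\right),
\end{equation*}
the parenthesis is non-negative because $s\theta'(s)/\theta(s)\ge i_\theta \ge s_\lambda \ge s\lambda'(s)/\lambda(s)$. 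This yields convexity of $\Upsilon$.

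For the additional claim, assume $s_\Lambda<i_\Theta$. Clearly $\Upsilon(0)=\Theta(\Lambda^{-1}(0))=0$ and $\Upsilon(t)>0$ for $t>0$, so only \eqref{Nfunct} remains. Setting $k:=L(t)$ one has $\Upsilon(t)/t=\Theta(k)/\Lambda(k)$, and \eqref{factor} applied to $\Lambda$ and $\Theta$ yields
\begin{equation*}
\frac{\Theta(k)}{\Lambda(k)} \ge \frac{\Theta(1)}{\Lambda(1)}\,k^{\,i_\Theta-s_\Lambda} \;\text{ for } k\ge 1, \qquad \frac{\Theta(k)}{\Lambda(k)} \le \frac{\Theta(1)}{\Lambda(1)}\,k^{\,i_\Theta-s_\Lambda} \;\text{ for } 0<k\le 1.
\end{equation*}
Since $L(t)\to +\infty$ as $t\to+\infty$, $L(t)\to 0^+$ as $t\to 0^+$, and $i_\Theta-s_\Lambda>0$, the two limits in \eqref{Nfunct} follow immediately.

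No step is particularly delicate: the substantive points are the pointwise bound $\lambda'/\lambda\le\theta'/\theta$ (ensuring convexity) and the strict gap $i_\Theta>s_\Lambda$ (to obtain the correct signs in the Young-function limits). Everything else is the chain rule and routine manipulation of \eqref{factor}, so I do not foresee any real obstacle.
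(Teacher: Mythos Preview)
Your proof is correct. The convexity argument is essentially identical to the paper's: both compute $\Upsilon'=\dfrac{\theta}{\lambda}\circ\Lambda^{-1}$ and check that $(\theta/\lambda)'\ge 0$ via the inequality $s_\lambda\le i_\theta$; the paper simply phrases this as $\Upsilon''\ge 0$ rather than ``$\Upsilon'$ is non-decreasing''.

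For the Young-function part you take a genuinely different, and shorter, route. You estimate $\Upsilon(t)/t=\Theta(k)/\Lambda(k)$ directly through \eqref{factor} and the gap $i_\Theta-s_\Lambda>0$, which gives the two limits in \eqref{Nfunct} immediately. The paper instead computes the ratio $t\Upsilon'(t)/\Upsilon(t)$ and derives the two-sided bound
\[
1<\frac{i_\Theta}{s_\Lambda}\le i_\Upsilon\le s_\Upsilon\le \frac{s_\Theta}{i_\Lambda}<+\infty,
\]
from which \eqref{Nfunct} follows. Your approach is more economical for the stated conclusion; the paper's approach buys an additional piece of information---the index bound \eqref{nonyoungcompind}---which is referenced later (its proof technique is reused in Lemma~\ref{talenti}). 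If you only need the proposition as stated, your argument is entirely adequate.
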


\begin{proof}
First we notice that $\Upsilon$ is of class $C^2$. We have
\begin{equation}
\label{composition}
\Upsilon' = (\Theta'\circ\Lambda^{-1}) \cdot (\Lambda^{-1})' = (\Theta'\circ \Lambda^{-1}) \cdot \left( \frac{1}{\Lambda'}\circ\Lambda^{-1} \right) = \frac{\Theta'}{\Lambda'}\circ\Lambda^{-1},
\end{equation}
so $\Upsilon$ is strictly increasing, since $\Theta',\Lambda'>0$ in $\R_+$. Differentiating we get
\begin{equation*}
\begin{split}
\Upsilon'' &= \left( \frac{\Theta'}{\Lambda'}\circ\Lambda^{-1} \right)' = \left[ \left( \frac{\Theta'}{\Lambda'} \left( \frac{\Theta''}{\Theta'}-\frac{\Lambda''}{\Lambda'} \right) \right) \circ \Lambda^{-1} \right] \cdot (\Lambda^{-1})' \\
&=\left[ \left( \frac{\Theta'}{\Lambda'} \left( \frac{\Theta''}{\Theta'}-\frac{\Lambda''}{\Lambda'} \right) \right) \circ \Lambda^{-1} \right] \cdot \left(\frac{1}{\Lambda'}\circ\Lambda^{-1}\right) = \left[ \frac{\Theta'}{(\Lambda')^2} \left( \frac{\Theta''}{\Theta'}-\frac{\Lambda''}{\Lambda'} \right) \right] \circ\Lambda^{-1}.
\end{split}
\end{equation*}
Moreover,
\begin{equation*}
\frac{\Theta''(t)}{\Theta'(t)}-\frac{\Lambda''(t)}{\Lambda'(t)} = \frac{1}{t} \left(\frac{t\Theta''(t)}{\Theta'(t)}-\frac{t\Lambda''(t)}{\Lambda'(t)}\right) \geq \frac{1}{t}(i_\theta-s_\lambda)\geq 0\quad \forall t>0.
\end{equation*}
Thus $\Upsilon''\geq 0$, ensuring the convexity of $\Upsilon$.

Now suppose $s_\Lambda<i_\Theta$. By \eqref{composition} and the definition of $\Upsilon$ we deduce that
\begin{equation*}
\frac{t\Upsilon'(t)}{\Upsilon(t)} = \frac{t\Theta'(\Lambda^{-1}(t))}{\Theta(\Lambda^{-1}(t))\Lambda'(\Lambda^{-1}(t))} = \frac{	\Lambda(\tau)\Theta'(\tau)}{\Theta(\tau)\Lambda'(\tau)} = \frac{\frac{\tau\Theta'(\tau)}{\Theta(\tau)}}{\frac{\tau\Lambda'(\tau)}{\Lambda(\tau)}},
\end{equation*}
where $\tau:=\Lambda^{-1}(t)$. Accordingly, besides Lemma \ref{intind}, we infer
\begin{equation}
\label{nonyoungcompind}
1 < \frac{i_\Theta}{s_\Lambda} \leq i_\Upsilon \leq s_\Upsilon \leq \frac{s_\Theta}{i_\Lambda} \leq \frac{s_\theta+1}{i_\lambda+1} < +\infty,
\end{equation}
ensuring \eqref{Nfunct} for $\Upsilon$, which is therefore a Young function.
\end{proof}

The following lemma can be found in \cite[Lemma VI.3.2]{RR}.

\begin{lemma}
\label{RRgen}
Let $\Phi$, $\Psi$ be two Young functions such that $\Phi \ll \Psi$. Then there exists a Young function $\Lambda$ such that $\Phi\ll\Lambda\ll\Psi$. Such $\Lambda$ is called intermediate function between $\Phi$ and $\Psi$.
\end{lemma}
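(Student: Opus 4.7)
The plan is to construct $\Lambda$ by prescribing its right-derivative $\lambda := \Lambda'$ as a non-decreasing function that, on a sequence of intervals marching out to infinity, amplifies $\phi := \Phi'$ by an unbounded sequence of factors while staying controlled from above in a way that produces $\Lambda \ll \Psi$. The raw algebraic candidates ($\sqrt{\Phi \Psi}$, $\Phi \cdot \eta(t)$ for a slowly growing $\eta$, etc.) fail because the absence of a $\Delta_2$ condition allows $\Phi(t)/\Phi(\eta t)$ to blow up for $\eta < 1$, so any attempt that does not genuinely raise the derivative on a sequence of intervals cannot yield $\Phi \ll \Lambda$.

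The first step is to invoke the hypothesis $\Phi \ll \Psi$ to extract, for each $n\in\N$, a threshold $T_n > 0$ (strictly increasing, $T_n \to +\infty$) such that
\[
\Phi(t) \leq \frac{1}{n^{n}}\,\Psi\!\left(\frac{t}{n}\right) \quad \forall t \geq T_n.
\]
By passing to a subsequence we may also force $T_{n+1}/T_n \to +\infty$ arbitrarily fast, which will be crucial later. Next, pick a slowly diverging sequence $\kappa_n \nearrow +\infty$ (e.g.\ $\kappa_n := \lfloor \log n \rfloor$) and define $\lambda$ piecewise: on $[0,T_1]$ set $\lambda := \phi$, and on each $[T_n, T_{n+1})$ define $\lambda$ to be a non-decreasing continuous function which coincides with $\kappa_n\,\phi$ on the bulk of the interval, is adjusted on short transition pieces to guarantee global monotonicity at the junctions, and stays uniformly below $\psi/n$ on the whole interval (this is consistent with the amplification $\kappa_n\,\phi$ thanks to the choice of $T_n$ and the slow growth of $\kappa_n$). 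Finally set $\Lambda(t) := \int_0^t \lambda(s)\,\ds$.

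Because $\lambda$ is non-decreasing, continuous, vanishes at $0$ and tends to $+\infty$, $\Lambda$ is a Young function. The relation $\Lambda \ll \Psi$ follows from integrating $\lambda \leq \psi/n$ on $[T_n,T_{n+1})$, which gives $\Lambda(t) \leq \Psi(t)/n$ for large $t$; combined with the geometric spread of $T_n$, this upgrades to $\Lambda(t)/\Psi(\eta t) \to 0$ for every $\eta > 0$. The relation $\Phi \ll \Lambda$ follows from $\lambda \geq \kappa_n \phi$ on most of $[T_n, T_{n+1})$: integrating gives $\Lambda(t)/\Phi(t) \to +\infty$, and the extension to arbitrary $\eta > 0$ is handled by choosing $T_{n+1}/T_n$ eventually larger than $1/\eta$, so that $\eta t$ still lies in an interval where the amplification $\kappa_m$ (with $m \to \infty$) is active.

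The main technical obstacle is the last point: passing from the "same scaling" bound $\Phi(t)/\Lambda(t) \to 0$ to the full relation $\Phi(t)/\Lambda(\eta t) \to 0$ for every $\eta > 0$, with the symmetric issue on the $\Psi$ side. Because $\Phi(t)/\Phi(\eta t)$ is not under control when $\eta < 1$ in the absence of $\Delta_2$, the geometric spacing of the $T_n$'s must be tuned to outrun this ratio; this forces the construction to work at the level of the derivative $\lambda$ rather than via any simple algebraic combination of $\Phi$ and $\Psi$, which is exactly what the Rao--Ren proof of Lemma VI.3.2 does.
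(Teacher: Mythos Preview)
The paper itself does not prove this lemma; it simply cites \cite[Lemma~VI.3.2]{RR}. Your sketch therefore offers more than the paper, and it is correct in spirit that the construction must proceed at the level of the derivative. However, there is a genuine gap in your argument for $\Phi \ll \Lambda$. You correctly identify the obstacle: without $\Delta_2$, the ratio $\Phi(t)/\Phi(\eta t)$ is uncontrolled for $\eta<1$, so the ``same-scale'' bound $\Phi(t)/\Lambda(t)\to 0$ does not upgrade for free. But your proposed fix---spacing the $T_n$ so that $T_{n+1}/T_n>1/\eta$, hence $\eta t$ lands in $[T_{n-1},T_{n+1})$---does not close it. Even granting this, the amplification only gives $\Lambda(\eta t)\gtrsim \kappa_{n-1}\bigl(\Phi(\eta t)-\Phi(T_{n-1})\bigr)$, so
\[
\frac{\Phi(t)}{\Lambda(\eta t)} \lesssim \frac{1}{\kappa_{n-1}}\cdot\frac{\Phi(t)}{\Phi(\eta t)},
\]
and the second factor is exactly the unbounded quantity you set out to avoid. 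No spacing of the $T_n$ controls $\Phi(t)/\Phi(\eta t)$ for $t$ ranging over $[T_n,T_{n+1})$: making $T_{n+1}/T_n$ large only makes $\sup_{t\in[T_n,T_{n+1})}\Phi(t)/\Phi(\eta t)$ potentially larger. Your final sentence effectively defers back to Rao--Ren rather than completing the argument.

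One clean repair avoids the $\Phi(t)/\Phi(\eta t)$ issue altogether. For Young functions, convexity gives the equivalence $\Phi\ll\Psi \iff \Psi^{-1}(s)/\Phi^{-1}(s)\to 0$ as $s\to\infty$ (use $\Psi(\epsilon\eta t)\le \epsilon\Psi(\eta t)$ for $\epsilon<1$). The problem then becomes: given concave increasing $g:=\Psi^{-1}\le f:=\Phi^{-1}$ with $g/f\to 0$, build a concave increasing $h$ with $h/f\to 0$ and $g/h\to 0$, and set $\Lambda:=h^{-1}$. This intermediate-concave-function construction is elementary (e.g.\ piecewise-affine interpolation on a well-chosen sequence) and sidesteps any ratio of the form $\Phi(t)/\Phi(\eta t)$.
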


\begin{prop}
\label{lambdadef}
Under \eqref{prodcond}, there exist two Young functions $\Lambda_1$, $\Lambda_2$ such that
\begin{equation*}
\label{prodcond2}
\overline{\Phi} \ll \Lambda_1 \ll \Psi\circ\Upsilon_2^{-1}\circ\overline{\Upsilon}_2 \quad \mbox{and} \quad \overline{\Psi} \ll \Lambda_2 \ll \Phi\circ\Gamma_1^{-1}\circ\overline{\Gamma}_1.
\end{equation*} 
\end{prop}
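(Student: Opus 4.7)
The plan is to apply Lemma \ref{RRgen} (the intermediate-function lemma from Rao--Ren) to each of the two strict domination relations in \eqref{prodcond} separately: the first would furnish $\Lambda_1$ with $\overline{\Phi}\ll\Lambda_1\ll\Psi\circ\Upsilon_2^{-1}\circ\overline{\Upsilon}_2$, and the second, by the same argument with $(\Gamma_1,\overline{\Psi},\Phi)$ in place of $(\Upsilon_2,\overline{\Phi},\Psi)$, would yield $\Lambda_2$.

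The only nontrivial hypothesis to check before invoking Lemma \ref{RRgen} is that the right-hand side of each relation is (equivalent to) a Young function; the Young-function property of $\overline{\Phi}$ and $\overline{\Psi}$ is automatic from \ref{ellipticity}. For $F:=\Psi\circ\Upsilon_2^{-1}\circ\overline{\Upsilon}_2$, I would rewrite it as $F=\Psi\circ G^{-1}$ with $G:=\overline{\Upsilon}_2^{-1}\circ\Upsilon_2$, and then appeal to Proposition \ref{convex}. Using \eqref{fundineq0}, observe that $G(t)\simeq \Upsilon_2(t)/t$, which is (equivalent to) a $C^2$ increasing function vanishing at $0$. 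The index computations needed by Proposition \ref{convex} -- namely, that $G$ has finite positive indices and that $s_G<i_\Psi$ -- can be read off from \ref{ellipticity}--\ref{growthcond} through \eqref{youngind}, Lemma \ref{intind}, and the fact that the indices of $\Upsilon_2(t)/t$ are shifted by $-1$ from those of $\Upsilon_2$. Proposition \ref{convex} then gives that (an equivalent smooth version of) $F=\Psi\circ G^{-1}$ is a Young function.

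With $F_1:=\Psi\circ\Upsilon_2^{-1}\circ\overline{\Upsilon}_2$ and $F_2:=\Phi\circ\Gamma_1^{-1}\circ\overline{\Gamma}_1$ established as Young functions, \eqref{prodcond} gives $\overline{\Phi}\ll F_1$ and $\overline{\Psi}\ll F_2$, so Lemma \ref{RRgen} produces Young functions $\Lambda_1$ and $\Lambda_2$ satisfying the desired chains of strict dominations.

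The main obstacle is the verification that the triple compositions $F_1$ and $F_2$ are Young functions; this is essentially a bookkeeping argument on indices, but it has to be carried out carefully because \eqref{youngind} yields only one-sided information on the indices of the conjugates, and \ref{ellipticity} must be combined with Lemma \ref{intind} to get $C^2$ control on the relevant functions. Once this regularity-and-indices step is settled, the remainder is a direct application of Lemma \ref{RRgen}.
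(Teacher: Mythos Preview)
Your plan hinges on showing that $F_1:=\Psi\circ\Upsilon_2^{-1}\circ\overline{\Upsilon}_2$ is (equivalent to) a Young function, so that Lemma \ref{RRgen} applies directly. This is precisely the step the paper \emph{does not} take, and for good reason: it cannot be deduced from the hypotheses. The only link between $\Upsilon_2$ and $\Psi$ is \eqref{prodcond} itself, which is a strict-domination relation $\overline{\Phi}\ll F_1$; but $\ll$ carries no information on indices (for instance $t^p\ll t^p\log(1+t)$, yet both have index $p$). Hence you cannot infer $s_G<i_\Psi$ for $G:=\overline{\Upsilon}_2^{-1}\circ\Upsilon_2$ from \ref{ellipticity}--\ref{growthcond}, and Proposition \ref{convex} is not available. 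Your ``bookkeeping on indices'' would at best reproduce \eqref{prodcond} in the power case, not justify the Young-function property in general. The paper's proof is written with the explicit caveat ``even if $\Theta$ is not a Young function''.

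The paper circumvents this obstruction by an indirect construction. It first extracts from the proof of \cite[Theorem II.2.2(b)]{RR} a Young function $R$ with $\Xi:=\overline{\Phi}\circ R<\Theta$, then sandwiches $\Xi$ between itself and a convex bi-conjugate $\hat{\Theta}^{\star\star}$ of a truncation of $\Theta$; the resulting $\hat{\Lambda}_1:=\frac{1}{2M}\hat{\Theta}^{\star\star}$ is a genuine Young function satisfying $\overline{\Phi}\ll\hat{\Lambda}_1<\Theta$. Only at this point is Lemma \ref{RRgen} invoked, between $\overline{\Phi}$ and $\hat{\Lambda}_1$, giving $\Lambda_1$ with $\overline{\Phi}\ll\Lambda_1\ll\hat{\Lambda}_1<\Theta$, hence $\Lambda_1\ll\Theta$. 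The essential new idea you are missing is this convexification step, which manufactures a Young function below a non-Young $\Theta$ while preserving the $\ll$ relation with $\overline{\Phi}$.
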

\begin{proof}
Set $\Theta:=\Psi\circ\Upsilon_2^{-1}\circ\overline{\Upsilon}_2$. A careful inspection of \cite[Theorem II.2.2(b)]{RR} reveals that there exists a Young function $R$ such that $\Xi := \overline{\Phi}\circ R < \Theta$ even if $\Theta$ is not a Young function. Thus, using \eqref{factor} and $\Xi < \Theta$, we have
\begin{equation*}
\Xi(t) \leq \Xi(1)t^{i_\Xi}\chi_{\{t\leq 1\}} + M\Theta(ct) \chi_{\{t>1\}} =:\hat{\Theta}(t) \quad \forall t>0,
\end{equation*}
for suitable $M,c>0$. Let us consider $F^{\star\star}:=(F^\star)^\star$ to be the convex bi-conjugate of a generic function $F$ (see, e.g., \cite[Definition 51.1]{Z/III} for the definition of convex conjugate). From \cite[Proposition 51.6]{Z/III} we have $\hat{\Theta}^{\star\star}\leq \hat{\Theta}$ and $\Xi^{\star\star}=\Xi$, since $\Xi$ is convex and continuous. Moreover, the convex conjugation is a decreasing operator (vide \cite[Proposition 51.6]{Z/III}), so that $\Xi\leq \hat{\Theta}$ implies $\hat{\Theta}^\star \leq \Xi^\star$, which in turn gives $\Xi^{\star\star} \leq \hat{\Theta}^{\star\star}$. Then
\begin{equation}
\label{biconj}
\Xi=\Xi^{\star\star} \leq \hat{\Theta}^{\star\star} \leq \hat{\Theta} \quad \mbox{in} \;\; (0,+\infty).
\end{equation}
Define $\hat{\Lambda}_1:=\frac{1}{2M} \hat{\Theta}^{\star\star}$. Convexity of $\hat{\Lambda}_1$ is guaranteed by \cite[Proposition 51.6]{Z/III}; moreover, it is readily seen that \eqref{biconj} ensures \eqref{Nfunct} for $\Lambda:=\hat{\Lambda}_1$. Hence $\hat{\Lambda}_1$ is a Young function. Obviously, $\hat{\Lambda}_1 < \Theta$. On the other hand, for any fixed $\eps,C>0$, we choose $T>0$ such that
\begin{equation}
\label{superlin2}
\frac{R(t)}{t} > \frac{1}{\eps} \max\{2MC,1\} \quad \forall t>T,
\end{equation}
which is possible by super-linearity of $R$. Thus, recalling \eqref{biconj}, the convexity of $\overline{\Phi}$, and \eqref{superlin2},
\begin{equation*}
\frac{\hat{\Lambda}_1(\eps t)}{\overline{\Phi}(t)} = \frac{\hat{\Lambda}_1(\eps t)}{\Xi(\eps t)} \frac{\Xi(\eps t)}{\overline{\Phi}(t)} \geq \frac{1}{2M} \frac{\overline{\Phi}(R(\eps t))}{\overline{\Phi}(t)} = \frac{1}{2M} \frac{\overline{\Phi}\left(\frac{R(\eps t)}{t} t\right)}{\overline{\Phi}(t)} \geq \frac{\eps}{2M} \inf_{t>\frac{T}{\eps}} \frac{R(\eps t)}{\eps t} \geq C
\end{equation*}
holds true for any $t>\frac{T}{\eps}$. Arbitrariness of $C$ and $\eps$ permits to conclude $\overline{\Phi} \ll \hat{\Lambda}_1$. Invoking Proposition \ref{RRgen} produces a Young function $\Lambda_1$ such that $\overline{\Phi}\ll\Lambda_1\ll\hat{\Lambda}_1<\Theta$. The same argument furnishes $\Lambda_2$ with the required properties.
\end{proof}

\begin{prop}
\label{interpolation}
Let $h$ belong to $L^1(\R^N) \cap L^\infty(\R^N)$. Then $h \in L^{\Lambda}(\R^N)$ for all Young functions $\Lambda$.
\end{prop}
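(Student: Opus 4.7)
The plan is to show directly that the Luxembourg modular $\int_{\R^N} \Lambda(|h(x)|/\tau)\dx$ is finite (indeed, at most $1$) for some $\tau>0$, which gives $\|h\|_{L^\Lambda(\R^N)} < +\infty$.

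The key observation is that a Young function $\Lambda$ is convex on $[0,+\infty)$ with $\Lambda(0)=0$, so for every $t\in[0,1]$ one has
\begin{equation*}
\Lambda(t) = \Lambda\bigl(t\cdot 1 + (1-t)\cdot 0\bigr) \leq t\,\Lambda(1).
\end{equation*}
In particular, $\Lambda$ behaves sub-linearly near the origin, which is what the $L^1$-information will exploit.

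First I would pick $\tau>0$ so large that $|h(x)|/\tau \leq 1$ for a.e. $x\in\R^N$; this is possible since $h\in L^\infty(\R^N)$, and it suffices to take $\tau \geq \|h\|_\infty$. Then, by the bound above applied pointwise to $t=|h(x)|/\tau$,
\begin{equation*}
\int_{\R^N} \Lambda\!\left(\frac{|h(x)|}{\tau}\right)\dx \;\leq\; \frac{\Lambda(1)}{\tau}\int_{\R^N}|h(x)|\dx \;=\; \frac{\Lambda(1)\,\|h\|_1}{\tau}.
\end{equation*}
Choosing now $\tau := \max\{\|h\|_\infty,\,\Lambda(1)\|h\|_1\}$ makes the right-hand side $\leq 1$, so by definition of the Luxembourg norm $\|h\|_{L^\Lambda(\R^N)} \leq \tau < +\infty$, whence $h\in L^\Lambda(\R^N)$.

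There is no real obstacle here; the only point to be careful about is the degenerate case $\|h\|_1=0$ or $\|h\|_\infty = 0$, in which $h=0$ a.e.\ and the statement is trivial, so one may assume both are positive when selecting $\tau$.
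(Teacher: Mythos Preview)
Your proof is correct and follows essentially the same strategy as the paper: use the $L^\infty$ bound to scale $|h|/\tau$ into a neighborhood of the origin where $\Lambda$ is sub-linear, then use the $L^1$ bound to make the modular at most $1$. The only cosmetic difference is the source of the sub-linear bound---the paper invokes $\lim_{t\to 0^+}\Lambda(t)/t=0$ to get $\Lambda(t)\leq t$ on some $(0,\sigma]$, whereas you use convexity and $\Lambda(0)=0$ to get $\Lambda(t)\leq t\,\Lambda(1)$ on $[0,1]$; the resulting explicit bounds on $\|h\|_\Lambda$ differ accordingly but the argument is the same.
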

\begin{proof}
According to \eqref{Nfunct}, there exists $\sigma>0$ such that $\Lambda(t) < t$ for all $t\in(0,\sigma]$. Set $\tau := \max\{\|h\|_1,\sigma^{-1}\|h\|_\infty\}$. Since $|\{|h|>\tau\sigma\}| = 0$, we have
\begin{equation*}
\begin{split}
\int_{\R^N} \Lambda \left( \frac{|h|}{\tau} \right) \dx &= \int_{\{|h|\leq\tau\sigma\}} \Lambda \left( \frac{|h|}{\tau} \right) \dx \leq \frac{1}{\tau} \int_{\{|h|\leq\tau\sigma\}} |h| \dx \leq \frac{\|h\|_1}{\tau} \leq 1.
\end{split}
\end{equation*}
Thus $\|h\|_\Lambda \leq \max\{\|h\|_1,\sigma^{-1}\|h\|_\infty\}$.
\end{proof}

\begin{prop}
\label{interpolation2}
Let $\Lambda,\Theta$ be two Young functions such that $\Lambda<\Theta$, and let $h\in L^1(\R^N) \cap L^\infty(\R^N)$. Then there exists $C=C(\Lambda,\Theta,\|h\|_1,\|h\|_\infty)>0$ such that $\|hu\|_\Lambda \leq C \|u\|_\Theta$ for all $u\in L^\Theta(\R^N)$.
\end{prop}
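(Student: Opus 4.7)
The plan is to split $\Lambda$ into a linear part near $0$ and a $\Theta$-dominated part near infinity, then estimate the two corresponding pieces of the Luxembourg integral by different tools. From $\Lambda<\Theta$ there exist $c,T>0$ with $\Lambda(t)\leq\Theta(ct)$ for $t\geq T$, while convexity of $\Lambda$ (together with $\Lambda(0)=0$) forces $\Lambda(t)\leq (\Lambda(T)/T)\,t$ for $t\in[0,T]$. Combining the two gives the global pointwise bound
\begin{equation*}
\Lambda(t)\leq A t+\Theta(ct)\qquad\forall\,t\geq 0,\qquad A:=\Lambda(T)/T.
\end{equation*}

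Assume $\|u\|_\Theta>0$ (otherwise everything vanishes) and set $\tau:=C\|u\|_\Theta$, with $C>0$ to be selected. Applying the pointwise bound with argument $|hu|/\tau$ and integrating,
\begin{equation*}
\int_{\R^N}\Lambda\!\left(\frac{|hu|}{\tau}\right)\dx \leq \frac{A}{\tau}\int_{\R^N}|hu|\dx+\int_{\R^N}\Theta\!\left(\frac{c|hu|}{\tau}\right)\dx.
\end{equation*}
For the first term the bound $|h|\leq\|h\|_\infty$ is useless, since $u$ need not lie in $L^1$; instead I would invoke the H\"older inequality in Orlicz spaces, $\int|hu|\dx\leq 2\|h\|_{\overline{\Theta}}\|u\|_\Theta$, and then Proposition \ref{interpolation} applied to $\overline{\Theta}$, which provides $\|h\|_{\overline{\Theta}}\leq K$ for some $K=K(\overline{\Theta},\|h\|_1,\|h\|_\infty)$. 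For the second term, $|hu|\leq\|h\|_\infty|u|$ together with the choice $C\geq c\|h\|_\infty$ ensures that the ratio $c\|h\|_\infty/C$ is in $[0,1]$, so the convexity inequality $\Theta(\lambda s)\leq\lambda\,\Theta(s)$ for $\lambda\in[0,1]$ yields
\begin{equation*}
\int_{\R^N}\Theta\!\left(\frac{c|hu|}{\tau}\right)\dx \leq \frac{c\|h\|_\infty}{C}\int_{\R^N}\Theta\!\left(\frac{|u|}{\|u\|_\Theta}\right)\dx \leq \frac{c\|h\|_\infty}{C},
\end{equation*}
where the last step uses the standard Luxembourg-norm bound $\int\Theta(|u|/\|u\|_\Theta)\dx\leq 1$ valid for any Young function.

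Choosing $C:=2AK+c\|h\|_\infty$, which automatically satisfies $C\geq c\|h\|_\infty$, makes the total $\leq 1$; hence $\|hu\|_\Lambda\leq\tau=C\|u\|_\Theta$ with $C$ depending only on $\Lambda,\Theta,\|h\|_1,\|h\|_\infty$, as required. The delicate point of the argument is the small-$t$ portion of $\Lambda$: there the domination $\Lambda<\Theta$ yields no information and a naive use of $\|h\|_\infty$ fails because $u$ need not be integrable. The combination of the Orlicz H\"older inequality with Proposition \ref{interpolation} neatly converts this would-be $L^1$ estimate into a product of Orlicz norms, using precisely the hypothesis $h\in L^1\cap L^\infty$.
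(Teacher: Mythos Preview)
Your proof is correct. Both you and the paper split the estimate according to whether the argument of $\Lambda$ is above or below the threshold $T$ coming from $\Lambda<\Theta$, and both use convexity in the form $\Lambda(\lambda s)\leq\lambda\Lambda(s)$ for $\lambda\in[0,1]$ together with the unit-ball bound $\int\Theta(|u|/\|u\|_\Theta)\dx\leq 1$.

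The execution differs, however. The paper first pulls out the factor $\frac{|h|}{R\|h\|_\infty}\in[0,1]$ by convexity of $\Lambda$, \emph{keeping the weight $|h|$ inside the integral}, and only then splits on $\{|u|\geq cT\|u\|_\Theta\}$ versus its complement. On the small set this leaves $\frac{\Lambda(T)}{\|h\|_\infty}\int|h|\dx$, which is controlled directly by $\|h\|_1$ without any appeal to Orlicz H\"older or to Proposition~\ref{interpolation}; the final constant is the explicit $c(\|h\|_\infty+\Lambda(T)\|h\|_1)$. Your route instead packages the small-$t$ behaviour into a global linear bound $\Lambda(t)\leq At+\Theta(ct)$ and handles the resulting $\int|hu|\dx$ via the Orlicz H\"older inequality and Proposition~\ref{interpolation} applied to $\overline{\Theta}$. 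This is perfectly valid, but it is less self-contained: it imports two auxiliary results where the paper needs none, and the dependence on $\|h\|_1,\|h\|_\infty$ is mediated through the constant $K=K(\overline{\Theta},\|h\|_1,\|h\|_\infty)$ rather than appearing explicitly. The paper's trick of carrying $|h|$ through the convexity step is worth noting, since it is precisely what lets $\|h\|_1$ enter without a detour through duality.
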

\begin{proof}
By $\Lambda<\Theta$ we have, for suitable $c,T>0$,
\begin{equation*}
\Lambda(t) < \Theta(ct) \quad \forall t\geq T.
\end{equation*}
Set $\tau:=cR\|h\|_\infty\|u\|_\Theta$, with $R\geq 1$ to be chosen. We compute
\begin{equation*}
\begin{split}
\int_{\R^N} \Lambda\left(\frac{|hu|}{\tau}\right) \dx &= \int_{\R^N} \Lambda\left(\frac{|hu|}{cR\|h\|_\infty\|u\|_\Theta}\right) \dx \leq \frac{1}{R} \int_{\R^N} \frac{|h|}{\|h\|_\infty} \Lambda\left(\frac{|u|}{c\|u\|_\Theta}\right) \dx \\
&\leq \frac{1}{R} \left[ \int_{\{|u|\geq cT\|u\|_\Theta\}} \Theta\left(\frac{|u|}{\|u\|_\Theta}\right) \dx + \frac{\Lambda(T)}{\|h\|_\infty} \int_{\{|u|<cT\|u\|_\Theta\}} |h| \dx \right] \\
&\leq \frac{1}{R} \left(1+\Lambda(T)\frac{\|h\|_1}{\|h\|_\infty}\right).
\end{split}
\end{equation*}
Choosing $R:=1+\Lambda(T)\frac{\|h\|_1}{\|h\|_\infty}$ we deduce
\begin{equation*}
\|hu\|_\Lambda \leq \tau = c\left(1+\Lambda(T)\frac{\|h\|_1}{\|h\|_\infty}\right)\|h\|_\infty\|u\|_\Theta = c(\|h\|_\infty+\Lambda(T)\|h\|_1)\|u\|_\Theta.
\end{equation*}
\end{proof}

\begin{lemma}[Young's inequality]
\label{youngineq}
Let $\Lambda_i,\Theta$, $i=1,2,3$, be four Young functions with $\Theta\in\Delta_2$. If
\begin{equation*}
\Lambda_1^{-1}(t)\Lambda_2^{-1}(t)\Lambda_3^{-1}(t) \leq k\Theta^{-1}(t) \quad \forall t>0
\end{equation*}
for some $k>0$, then
\begin{equation*}
\Theta(xyz) \leq C[\Lambda_1(x)+\Lambda_2(y)+\Lambda_3(z)] \quad \forall x,y,z>0
\end{equation*}
for some $C>0$ depending on $k,\Theta$.
\end{lemma}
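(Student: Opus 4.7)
My plan is to follow the standard pattern for generalised Young-type inequalities: reduce the triple product $xyz$ to a single monotone quantity via the hypothesis, apply $\Theta$, and then absorb the multiplicative factor $k$ using $\Delta_2$.

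Given $x,y,z>0$, I would set $\tau:=\max\{\Lambda_1(x),\Lambda_2(y),\Lambda_3(z)\}$. Monotonicity of each $\Lambda_i^{-1}$ yields $x\leq\Lambda_1^{-1}(\tau)$, $y\leq\Lambda_2^{-1}(\tau)$, and $z\leq\Lambda_3^{-1}(\tau)$; multiplying these and invoking the standing assumption at $t=\tau$ gives
\[
xyz\;\leq\;\Lambda_1^{-1}(\tau)\Lambda_2^{-1}(\tau)\Lambda_3^{-1}(\tau)\;\leq\;k\,\Theta^{-1}(\tau),
\]
and monotonicity of $\Theta$ produces $\Theta(xyz)\leq\Theta(k\Theta^{-1}(\tau))$. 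To absorb $k$, let $K$ denote the doubling constant for $\Theta$ from Definition~\ref{delta2} and pick $m\in\N$ with $2^m\geq k$; iterating $\Theta(2t)\leq K\Theta(t)$ exactly $m$ times yields $\Theta(kr)\leq K^m\Theta(r)$. Setting $r=\Theta^{-1}(\tau)$ and $C:=K^m$ gives
\[
\Theta(xyz)\;\leq\;C\tau\;\leq\;C\bigl[\Lambda_1(x)+\Lambda_2(y)+\Lambda_3(z)\bigr],
\]
which is the desired conclusion.

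The step I expect to require most care is the small-$\tau$ regime, i.e.\ $\Theta^{-1}(\tau)<T$, since Definition~\ref{delta2} guarantees the doubling inequality only beyond the threshold $T$, and in full generality the ratio $\Theta(kr)/\Theta(r)$ could blow up as $r\to 0^{+}$. In every application of this lemma within the paper, however, the function $\Theta$ will additionally satisfy the indices condition \eqref{indices}, so that the global estimate $\Theta(kr)\leq\overline{\zeta}_\Theta(k)\Theta(r)$ furnished by \eqref{factor} is available for every $r>0$; with this in hand one may simply take $C=\overline{\zeta}_\Theta(k)$ and dispense with any case-splitting. The entire argument thus reduces to the short chain of monotonicity estimates above, with $\Delta_2$ (in the strengthened form \eqref{factor}) invoked only to strip off the factor $k$.
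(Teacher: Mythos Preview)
Your argument is correct and coincides with the paper's proof: the paper also fixes the largest of the three values (by assuming without loss of generality $\Lambda_1(x)\leq\Lambda_2(y)\leq\Lambda_3(z)$, which is just your $\tau=\Lambda_3(z)$), bounds $xyz$ by $k\Theta^{-1}(\Lambda_3(z))$ via monotonicity and the hypothesis, and then invokes $\Theta\in\Delta_2$ to strip off $k$. Your remark about the threshold $T$ in Definition~\ref{delta2} is well taken; the paper glosses over this point in exactly the same way, and indeed every $\Theta$ to which the lemma is applied satisfies \eqref{indices}, so \eqref{factor} supplies the global doubling you need.
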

\begin{proof}
The proof is patterned after that of \cite[Lemma 2.1]{ON}. Suppose, without loss of generality, that $\Lambda_1(x)\leq \Lambda_2(y)\leq \Lambda_3(z)$. Then, by hypothesis,
\begin{equation*}
xyz = \Lambda_1^{-1}(\Lambda_1(x)) \Lambda_2^{-1}(\Lambda_2(y)) \Lambda_3^{-1}(\Lambda_3(z)) \leq \Lambda_1^{-1}(\Lambda_3(z)) \Lambda_2^{-1}(\Lambda_3(z)) \Lambda_3^{-1}(\Lambda_3(z)) \leq k\Theta^{-1}(\Lambda_3(z)).
\end{equation*}
Thus, since $\Theta\in\Delta_2$, there exists $C=C(k,\Theta)>0$ such that
\begin{equation*}
\Theta(xyz) \leq \Theta(k\Theta^{-1}(\Lambda_3(z))) \leq C\Theta(\Theta^{-1}(\Lambda_3(z))) = C\Lambda_3(z) \leq C[\Lambda_1(x)+\Lambda_2(y)+\Lambda_3(z)].
\end{equation*}
Exchanging the role of $\Lambda_1$,$\Lambda_2$,$\Lambda_3$ concludes the proof.
\end{proof}

\begin{prop}
\label{holderineq}
Let $\Lambda$ be a Young function satisfying $s_\Lambda<N$. Then, for any $u\in L^{\Lambda_*}(\R^N)$ and $v\in L^N(\R^N)$, one has $\|uv\|_\Lambda \leq C \|u\|_{\Lambda_*} \|v\|_N$.
\end{prop}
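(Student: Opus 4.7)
My plan is to derive the inequality from a pointwise Orlicz--H\"older estimate, whose central ingredient is the bound
\begin{equation*}
\Lambda_*^{-1}(t)\,t^{1/N} \leq C\,\Lambda^{-1}(t) \quad \forall t>0.
\end{equation*}
Setting $s:=\Lambda^{-1}(t)$ and recalling that $\Lambda_*=\Lambda\circ\mathscr{H}^{-1}$, this is equivalent to $\mathscr{H}(s)^N\Lambda(s) \leq C\,s^N$. By \eqref{factor}, one has $\Lambda(\tau)\geq (\tau/s)^{s_\Lambda}\Lambda(s)$ for every $\tau\in(0,s]$; inserting this into the integral defining $\mathscr{H}^{N'}$ reduces it to
\begin{equation*}
\mathscr{H}(s)^{N'} \leq \Bigl(\frac{s^{s_\Lambda}}{\Lambda(s)}\Bigr)^{N'-1} \int_0^s \tau^{(1-s_\Lambda)(N'-1)}\dtau,
\end{equation*}
which converges at the origin precisely because $s_\Lambda<N$; a short computation of the exponents (noting $s_\Lambda(N'-1)+(N-s_\Lambda)/(N-1)=N'$) then produces the claimed pointwise bound.

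With this in hand, I would deduce a two-variable Young-type inequality $\Lambda(xy)\leq C_1[\Lambda_*(x)+y^N]$ for all $x,y\geq 0$, mimicking the case analysis of Lemma~\ref{youngineq}. Assuming without loss of generality $\Lambda_*(x)\leq y^N=:t$, one gets $x\leq \Lambda_*^{-1}(t)$ and $y=t^{1/N}$, so $xy\leq \Lambda_*^{-1}(t)\,t^{1/N}\leq C\,\Lambda^{-1}(y^N)$; applying $\Lambda$ and using \eqref{factor} (available since $s_\Lambda<\infty$ forces $\Lambda\in\Delta_2$) yields $\Lambda(xy)\leq C^{s_\Lambda}\,y^N$. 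The symmetric situation gives $\Lambda(xy)\leq C^{s_\Lambda}\Lambda_*(x)$, hence $\Lambda(xy) \leq C_1[\Lambda_*(x)+y^N]$ in every case.

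To conclude, I would set $\alpha:=\|u\|_{\Lambda_*}$ and $\beta:=\|v\|_N$ and apply the two-variable Young inequality pointwise to $u/\alpha$ and $v/\beta$. Integrating and invoking the defining properties of the Luxembourg norm and of the $L^N$-norm produces
\begin{equation*}
\int_{\R^N}\Lambda\bigl(|uv|/(\alpha\beta)\bigr)\dx \leq 2C_1.
\end{equation*}
A final convexity rescaling $\Lambda(\lambda t)\leq\lambda\Lambda(t)$ for $\lambda\in[0,1]$, applied with $\lambda=1/(2C_1)$ (WLOG $2C_1\geq 1$), absorbs the constant and gives $\|uv\|_\Lambda \leq 2C_1\|u\|_{\Lambda_*}\|v\|_N$.

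The main technical step is the verification of the pointwise estimate for $\mathscr{H}$: the hypothesis $s_\Lambda<N$ plays a double role, ensuring both the integrability of $\tau^{(1-s_\Lambda)(N'-1)}$ at the origin and the correct matching of the powers of $s$ so that $\mathscr{H}(s)^N\Lambda(s)$ grows exactly like $s^N$. The remaining arguments are routine manipulations of Luxembourg norms together with the $\Delta_2$-condition.
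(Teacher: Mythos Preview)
Your proof is correct and follows essentially the same approach as the paper: both reduce the statement to the pointwise bound $\Lambda_*^{-1}(t)\,t^{1/N}\leq C\,\Lambda^{-1}(t)$ and establish it by bounding $\Lambda(\tau)$ from below by a power of $\tau$ times $\Lambda(s)$ in the integral defining $\mathscr{H}$ (the paper via the monotonicity of $t\mapsto t^{N-\eps}/\Lambda(t)$, you via \eqref{factor} with exponent $s_\Lambda$, which is in fact slightly sharper). The only difference is that the paper invokes O'Neil's generalized H\"older inequality \cite[Theorem~6.7]{ON} as a black box to pass from the pointwise bound to the norm inequality, whereas you reprove this step from scratch via the two-variable Young inequality $\Lambda(xy)\leq C_1[\Lambda_*(x)+y^N]$ in the spirit of Lemma~\ref{youngineq}; this makes your argument self-contained at the cost of a few extra lines.
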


\begin{proof}
According to H\"older's inequality \cite[Theorem 6.7]{ON}, it is sufficient to prove
\begin{equation}
\label{Holdcond}
t^{\frac{1}{N}} \Lambda_*^{-1}(t) \leq C \Lambda^{-1}(t) \quad \forall t>0,
\end{equation}
which is equivalent (setting $s:=\Lambda^{-1}(t)$) to
\begin{equation}
\label{Holdcond2}
\Lambda(s)^{\frac{1}{N}} \Lambda_*^{-1}(\Lambda(s)) \leq Cs \quad \forall s>0.
\end{equation}
Recalling that $\Lambda_*^{-1}(\Lambda(s)) = \left(\int_0^s \left(\frac{\tau}{\Lambda(\tau)}\right)^{\frac{1}{N-1}} \dtau\right)^{\frac{1}{N'}}$, \eqref{Holdcond2} is equivalent to
\begin{equation}
\label{Holdcond3}
\Lambda(s)^{\frac{1}{N}} \left(\int_0^s \left(\frac{\tau}{\Lambda(\tau)}\right)^{\frac{1}{N-1}} \dtau\right)^{\frac{1}{N'}} \leq Cs \quad \forall s>0.
\end{equation}
By hypothesis, for any $\eps\in(0,N-s_\Lambda)$, the function $t\mapsto\frac{t^{N-\eps}}{\Lambda(t)}$ is increasing: indeed,
\begin{equation*}
\frac{{\rm d}}{\dt} \frac{t^{N-\eps}}{\Lambda(t)} = \frac{t^{N-\eps-1}}{\Lambda(t)} \left( N-\eps - \frac{t\Lambda'(t)}{\Lambda(t)} \right) \geq \frac{t^{N-\eps-1}}{\Lambda(t)} \left( N-s_\Lambda-\eps \right) > 0 \quad \forall t>0.
\end{equation*}
Thus, fixing $\eps$ as above,
\begin{equation*}
\begin{split}
\int_0^s \left(\frac{\tau}{\Lambda(\tau)}\right)^{\frac{1}{N-1}} \dtau &= \int_0^s \left(\frac{\tau^{N-\eps}}{\Lambda(\tau)}\right)^{\frac{1}{N-1}} \tau^{\frac{\eps}{N-1}-1} \dtau \leq \left(\frac{s^{N-\eps}}{\Lambda(s)}\right)^{\frac{1}{N-1}} \int_0^s \tau^{\frac{\eps}{N-1}-1} \dtau \\
&= \frac{N-1}{\eps} \frac{s^{N'}}{\Lambda(s)^{\frac{1}{N-1}}},
\end{split}
\end{equation*}
which ensures \eqref{Holdcond3}.
\end{proof}

\begin{prop}
\label{weightyoung}
Let $\Lambda$ be a Young function satisfying $s_\Lambda<N$, and let $w\in L^1(\R^N) \cap L^\infty(\R^N)$, $w\geq 0$. Then, for any $u\in \mathcal{D}^{1,\Lambda}_0(\R^N)$,
\begin{equation*}
\int_{\R^N} w\Lambda(|u|) \dx \leq C \int_{\R^N} \Lambda(|\nabla u|) \dx,
\end{equation*}
being $C=C(N,\|w\|_1,\|w\|_\infty)>0$. In particular, the embedding
\begin{equation*}
\mathcal{D}^{1,\Lambda}_0(\R^N) \hookrightarrow L^\Lambda(\R^N;w)
\end{equation*}
is continuous.
\end{prop}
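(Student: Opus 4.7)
The strategy is to mimic the proof of Proposition \ref{interpolation2}, with the weight $w$ sitting outside $\Lambda(|u|)$ rather than multiplying $u$ inside, and then to compose the resulting continuous inclusion $L^{\Lambda_*}(\R^N) \hookrightarrow L^\Lambda(\R^N;w)$ with the Sobolev embedding \eqref{embedding}. The key structural fact I will exploit is that the Sobolev conjugate strictly dominates $\Lambda$, namely $\Lambda \ll \Lambda_*$ (a classical consequence of $s_\Lambda<N$), so that in particular $\Lambda<\Lambda_*$: there exist $c,T>0$ with $\Lambda(t) \leq \Lambda_*(ct)$ for all $t \geq T$.

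Fix $u\in\mathcal{D}^{1,\Lambda}_0(\R^N)$ with $u\not\equiv 0$, and set $\tau:=cR\|u\|_{\Lambda_*}$, where $R\geq 1$ is to be chosen. Convexity of $\Lambda$ together with $\Lambda(0)=0$ yields $\Lambda(|u|/\tau) \leq \frac{1}{R}\Lambda(|u|/(c\|u\|_{\Lambda_*}))$, whence
\begin{equation*}
\int_{\R^N} w\,\Lambda(|u|/\tau) \dx \leq \frac{1}{R} \int_{A} w\,\Lambda(T) \dx + \frac{1}{R} \int_{\R^N \setminus A} w\,\Lambda_*(|u|/\|u\|_{\Lambda_*}) \dx,
\end{equation*}
where $A:=\{|u|<cT\|u\|_{\Lambda_*}\}$ and on $\R^N\setminus A$ one uses $\Lambda(|u|/(c\|u\|_{\Lambda_*})) \leq \Lambda_*(|u|/\|u\|_{\Lambda_*})$. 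Bounding the first piece by $\Lambda(T)\|w\|_1/R$ and the second by $\|w\|_\infty \int \Lambda_*(|u|/\|u\|_{\Lambda_*}) \dx /R \leq \|w\|_\infty/R$ (by the very definition of the Luxembourg norm), and then choosing $R:=\Lambda(T)\|w\|_1+\|w\|_\infty$, makes the right-hand side $\leq 1$. Therefore $\|u\|_{L^\Lambda(\R^N;w)} \leq \tau = cR\|u\|_{\Lambda_*}$; composing with $\|u\|_{\Lambda_*} \leq C_0\|\nabla u\|_\Lambda$ from \eqref{embedding} produces the continuous embedding, while the integral inequality is recovered from the Luxembourg-norm bound through \eqref{factor}, using that $\Lambda\in\Delta_2\cap\nabla_2$ (guaranteed by $0<i_\Lambda\leq s_\Lambda<+\infty$).

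The main obstacle I anticipate is verifying $\Lambda<\Lambda_*$ in the general Orlicz setting, which requires unpacking the asymptotic behavior of $\mathscr{H}^{-1}(t)/t$ at infinity under $s_\Lambda<N$: this is classical but does not follow from the indices alone. A secondary, milder technical point is the non-sharpness of the passage between Luxembourg norm and modular integral via \eqref{factor}, which is however controlled by the $\Delta_2\cap\nabla_2$ structure of $\Lambda$.
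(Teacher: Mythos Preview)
Your route correctly yields the continuous embedding $\mathcal{D}^{1,\Lambda}_0(\R^N)\hookrightarrow L^\Lambda(\R^N;w)$, but the final step---``the integral inequality is recovered from the Luxembourg-norm bound through \eqref{factor}''---does not work when $i_\Lambda<s_\Lambda$. A norm inequality $\|u\|_{L^\Lambda(\R^N;w)}\leq C_1\|\nabla u\|_\Lambda$ gives, after normalizing $s:=\|\nabla u\|_\Lambda$ and using \eqref{factor},
\[
\int_{\R^N} w\Lambda(|u|)\dx \leq \overline{\zeta}_\Lambda(C_1 s),\qquad \int_{\R^N}\Lambda(|\nabla u|)\dx\geq \underline{\zeta}_\Lambda(s),
\]
so the ratio is at best $C_1^{s_\Lambda}s^{s_\Lambda-i_\Lambda}$ for $s\geq 1$, which is unbounded. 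The $\Delta_2\cap\nabla_2$ property only says $1<i_\Lambda\leq s_\Lambda<\infty$; it does not force $i_\Lambda=s_\Lambda$, and that equality is exactly what your passage from norm to modular needs. In short, the modular inequality in the statement is genuinely stronger than the embedding you prove.

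The paper avoids this obstruction by never passing through norms. It factors $\Lambda(|u|)$ as a product of three terms and applies the three-function Young inequality (Lemma~\ref{youngineq}) with $\overline{\Lambda}$, $t^N$, $\Lambda_*$ (legitimated by \eqref{Holdcond} and \eqref{fundineq0}); the $\overline{\Lambda}$-piece is reabsorbed on the left via \eqref{fundineq2}, the $t^N$-piece is already $\int\Lambda(|\nabla u|)$, and the $\Lambda_*$-piece is controlled by Cianchi's \emph{modular} Sobolev inequality \cite[Theorem~3]{C}, which bounds $\int\Lambda_*\bigl(|u|/C_N(\int\Lambda(|\nabla u|))^{1/N}\bigr)$ directly by $\int\Lambda(|\nabla u|)$. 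This modular-to-modular structure is what your argument is missing; to repair your approach you would need to replace the Luxembourg-norm Sobolev embedding \eqref{embedding} with its modular counterpart and argue at the level of integrals throughout.
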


\begin{proof}
Set $C_N=8|B_1|^{-1/N}$ and take any $\eps\in(0,1)$. Observe that
\begin{equation*}
\Lambda(|u|) = \left[\eps\frac{\Lambda(|u|)}{|u|}\right] \left[\frac{C_N}{\eps}\left(\int_{\R^N}\Lambda(|\nabla u|) \dx\right)^{\frac{1}{N}}\right] \left[\frac{|u|}{C_N \left(\int_{\R^N}\Lambda(|\nabla u|) \dx\right)^{\frac{1}{N}}}\right].
\end{equation*}
Now we apply Lemma \ref{youngineq} with the functions $\overline{\Lambda}$, $t^N$, and $\Lambda_*$, after observing that
\begin{equation*}
\overline{\Lambda}^{-1}(t)t^{\frac{1}{N}}\Lambda_*^{-1}(t) \leq k\overline{\Lambda}^{-1}(t)\Lambda^{-1}(t) \leq 2kt,
\end{equation*}
by virtue of \eqref{Holdcond} and \eqref{fundineq0}. Exploiting \eqref{fundineq2} We get
\begin{equation*}
\begin{aligned}
&\Lambda(|u|) \\
&\leq C\left[\eps\overline{\Lambda}\left(\frac{\Lambda(|u|)}{|u|}\right) + \left(\frac{C_N}{\eps}\right)^N \int_{\R^N}\Lambda(|\nabla u|) \dx + \Lambda_*\left(\frac{|u|}{C_N \left(\int_{\R^N}\Lambda(|\nabla u|) \dx\right)^{\frac{1}{N}}}\right)\right] \\
&\leq C\eps\Lambda(|u|) + C\left(\frac{C_N}{\eps}\right)^N \int_{\R^N}\Lambda(|\nabla u|) \dx + C\Lambda_*\left(\frac{|u|}{C_N \left(\int_{\R^N}\Lambda(|\nabla u|) \dx\right)^{\frac{1}{N}}}\right).
\end{aligned}
\end{equation*}
Re-absorbing the first term on the left, multiplying by $w$, and integrating yield
\begin{equation*}
\begin{aligned}
&(1-C\eps)\int_{\R^N} w\Lambda(|u|) \dx \\
&\leq C\left[\|w\|_1 \left(\frac{C_N}{\eps}\right)^N \int_{\R^N}\Lambda(|\nabla u|) \dx + \|w\|_\infty \int_{\R^N} \Lambda_*\left(\frac{|u|}{C_N \left(\int_{\R^N}\Lambda(|\nabla u|) \dx\right)^{\frac{1}{N}}}\right) \dx\right].
\end{aligned}
\end{equation*}
Now we choose $\eps=\frac{1}{2C}$. Using \cite[Theorem 3]{C} we conclude
\begin{equation*}
\int_{\R^N} w\Lambda(|u|) \dx \leq 2C\left[\left(\frac{C_N}{\eps}\right)^N \|w\|_1 + \|w\|_\infty\right] \int_{\R^N}\Lambda(|\nabla u|) \dx.
\end{equation*}
\end{proof}

\begin{prop}
\label{compemb}
Let $\Lambda$ be a Young function satisfying $s_\Lambda<N$, and let $w\in L^1(\R^N) \cap L^\infty(\R^N)$, $w\geq 0$, obeying $w(x) \to 0$ as $|x|\to+\infty$. Then the embedding
\begin{equation*}
\mathcal{D}^{1,\Lambda}_0(\R^N) \hookrightarrow L^\Lambda(\R^N;w)
\end{equation*}
is compact.
\end{prop}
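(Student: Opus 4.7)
The plan is to combine local compactness with a quantitative tail estimate extracted from Proposition~\ref{weightyoung}. Let $\{u_n\}$ be bounded in $\mathcal{D}^{1,\Lambda}_0(\R^N)$; since $s_\Lambda<+\infty$ forces $\Lambda\in\Delta_2\cap\nabla_2$, the space is reflexive and, up to a subsequence, $u_n\rightharpoonup u$. I aim at strong convergence in $L^\Lambda(\R^N;w)$, which by the $\Delta_2$ property is equivalent to the modular convergence $\int_{\R^N} w\,\Lambda(|u_n-u|/\tau)\dx \to 0$ for every $\tau>0$.

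First I would localize. For any fixed $R>0$, embedding \eqref{embedding} together with the inclusion $L^{\Lambda_*}(B_R)\hookrightarrow L^\Lambda(B_R)$ on the bounded ball (valid since $\Lambda_*$ dominates $\Lambda$ at infinity) shows that $\{u_n\}$ is bounded in $W^{1,\Lambda}(B_R)$. The Rellich--Kondrachov type compactness for Orlicz--Sobolev spaces on bounded smooth domains then yields, passing to a further subsequence, $u_n\to u$ strongly in $L^\Lambda(B_R)$; since $w\in L^\infty(\R^N)$, this gives
\[
\int_{B_R} w\,\Lambda\!\left(\frac{|u_n-u|}{\tau}\right)\dx \;\leq\; \|w\|_\infty\int_{B_R}\Lambda\!\left(\frac{|u_n-u|}{\tau}\right)\dx \;\to\;0.
\]

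For the outer piece I apply Proposition~\ref{weightyoung} with the truncated weight $w_R := w\chi_{B_R^e}$ to the function $u_n-u\in\mathcal{D}^{1,\Lambda}_0(\R^N)$, obtaining
\[
\int_{B_R^e} w\,\Lambda\!\left(\frac{|u_n-u|}{\tau}\right)\dx \;\leq\; C\bigl(N,\|w_R\|_1,\|w_R\|_\infty\bigr)\int_{\R^N}\Lambda\!\left(\frac{|\nabla(u_n-u)|}{\tau}\right)\dx.
\]
The integral on the right stays bounded in $n$ because $\{u_n\}$ is bounded in $\mathcal{D}^{1,\Lambda}_0(\R^N)$. Tracking the dependence inside the proof of Proposition~\ref{weightyoung}, the constant $C$ turns out to be a linear expression in $\|w_R\|_1$ and $\|w_R\|_\infty$; both vanish as $R\to+\infty$, the former by dominated convergence (as $w\in L^1(\R^N)$), the latter by the decay hypothesis $w(x)\to 0$ at infinity. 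A standard $\eps/2$ argument (choose $R$ so large that the tail is below $\eps/2$ uniformly in $n$, then $n_0$ so large that the inner piece is below $\eps/2$ for $n\geq n_0$) completes the proof of modular convergence, hence of norm convergence in $L^\Lambda(\R^N;w)$.

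The principal technicality is not the splitting itself but the quantitative dependence $C=O(\|w\|_1+\|w\|_\infty)$ in Proposition~\ref{weightyoung}, which forces one to inspect its proof rather than invoke it as a black box. A subsidiary point is the compact Sobolev--Orlicz embedding on $B_R$, which is classical under $s_\Lambda<+\infty$ and $\Lambda\in\Delta_2\cap\nabla_2$ and can be cited, e.g., from Adams' monograph or from Cianchi's works already used in the paper.
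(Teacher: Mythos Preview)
Your argument is correct and follows the same high-level strategy as the paper---local compactness plus a uniform tail estimate---but the execution differs in two respects. First, the paper phrases everything operator-theoretically: it composes restriction, the compact local embedding (cited from \cite[Lemma~7.4.1]{KJF}), and zero-extension into a compact operator $I_n$, then shows $\{I_n\}$ is Cauchy in operator norm and hence converges to the identity, which is therefore compact. You instead run the equivalent sequential argument directly on a weakly convergent sequence. Second, and more substantively, the paper controls the tail by a self-contained estimate $\|u\|_{L^\Lambda(B_n^e;w)} \leq c_n\|u\|_{\Lambda_*}$ with $c_n=\|w\|_{L^\infty(B_n^e)}^{1/(2s_{\Lambda_*})}$, using only the embedding into $L^{\Lambda_*}$; you recycle Proposition~\ref{weightyoung} applied to the truncated weight $w\chi_{B_R^e}$, which is slicker in spirit but forces you (as you correctly note) to open the proof of that proposition to extract the linear dependence of the constant on $\|w_R\|_1$ and $\|w_R\|_\infty$. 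Both routes are fine; the paper's is more self-contained, yours reuses existing machinery. One minor clean-up: once you have $u_n\rightharpoonup u$, the strong $L^\Lambda(B_R)$-limit of any subsequence is forced to be $u|_{B_R}$, so the full subsequence converges on each $B_R$ and no diagonal extraction is needed---your $\eps/2$ argument then goes through as written.
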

\begin{proof}
This proof is inspired by \cite[Theorem 2.1]{V}. For any bounded domain $\Omega \subseteq \R^N$, we define the weighted Orlicz-Sobolev space
\begin{equation*}
W^{1,\Lambda}(\Omega;w) := \{u\in L^\Lambda(\Omega;w): \, |\nabla u|\in L^\Lambda(\Omega)\}.
\end{equation*}
The following operators are continuous:
\begin{equation*}
\begin{alignedat}{2}
&r_n: \mathcal{D}^{1,\Lambda}_0(\R^N) \to W^{1,\Lambda}(B_n;w), \quad &&r_n(u) = u_{\mid_{B_n}}, \\
&i_n: W^{1,\Lambda}(B_n;w) \to L^\Lambda(B_n;w), \quad &&i_n(u) = u, \\
&e_n: L^\Lambda(B_n;w) \to L^\Lambda(\R^N;w), \quad && e_n(u)(x) = \left\{
\begin{alignedat}{2}
&u(x) \quad &\mbox{if} \;\; x\in B_n, \\
&0 \quad &\mbox{if} \;\; x\in B_n^e. \\
\end{alignedat}
\right.
\end{alignedat}
\end{equation*}
Reasoning as in \cite[Lemma 7.4.1]{KJF}, we deduce that $i_n$ is compact; thus $I_n:=e_n\circ i_n\circ r_n$ is compact too. We want to prove that $\{I_n\}$ converges to the identity operator. To this aim, first we show that, for all $u\in L^{\Lambda_*}(\R^N)$,
\begin{equation}
\label{operatorineq}
\|u\|_{L^\Lambda(B_n^e;w)} \leq c_n \|u\|_{\Lambda_*}, \quad \mbox{being} \;\; c_n:=\|w\|_{L^\infty(B_n^e)}^{\frac{1}{2s_{\Lambda_*}}}.
\end{equation}
Observe that $\|w\|_{L^\infty(B_n^e)}\to 0$ because of $w(x)\to 0$ as $|x|\to+\infty$, as well as $\|w\|_{L^1(B_n^e)}\to 0$, since $w\in L^1(\R^N)$. Thus, choosing $M>0$ such that $\Lambda(t) \leq \Lambda_*(t)$ for all $t>M$, besides using \eqref{factor}, for any $n$ sufficiently large we have
\begin{equation*}
\begin{split}
&\int_{B_n^e} w\Lambda\left(\frac{|u|}{c_n\|u\|_{\Lambda_*}}\right) \dx \\
&\leq \int_{\{u>Mc_n\|u\|_{\Lambda_*}\} \cap B_n^e} w \Lambda_*\left(\frac{|u|}{c_n\|u\|_{\Lambda_*}}\right) \dx + \Lambda(M) \int_{\{u\leq Mc_n\|u\|_{\Lambda_*}\} \cap B_n^e} w \dx \\
&\leq C \left[\int_{B_n^e} \frac{w}{c_n^{s_{\Lambda_*}}} \Lambda_*\left(\frac{|u|}{\|u\|_{\Lambda_*}}\right) \dx + \int_{B_n^e} w \dx\right] \\
&\leq C \left[ \|w\|_{L^\infty(B_n^e)}^{\frac{1}{2}} + \|w\|_{L^1(B_n^e)} \right] \leq 1,
\end{split}
\end{equation*}
ensuring \eqref{operatorineq}. Accordingly, for any $n,m\in\N$ such that $m>n$,
\begin{equation*}
\|I_n-I_m\| = \sup_{u\in\mathcal{D}^{1,\Lambda}_0(\R^N)\setminus\{0\}} \frac{\|I_n(u)-I_m(u)\|_{L^\Lambda(\R^N;w)}}{\|u\|_{\mathcal{D}^{1,\Lambda}_0(\R^N)}} \leq C \sup_{u\in\mathcal{D}^{1,\Lambda}_0(\R^N)\setminus\{0\}} \frac{\|u\|_{L^\Lambda(B_n^e;w)}}{\|u\|_{L^{\Lambda_*}(\R^N)}} \leq C c_n,
\end{equation*}
where we have used the embedding inequality related to \eqref{embedding}. Since $c_n\to 0$, $\{I_n\}$ is a Cauchy sequence of compact operators, and hence $I_n\to I$ for some compact operator $I$ (see \cite[Theorem 6.1]{B}). The argument in \cite[p.283]{V} identifies $I$ with the identity operator, which concludes the proof.
\end{proof}

\section{Decay estimates}

\begin{lemma}
\label{radialsub}
Suppose \ref{ellipticity}. Then, for all $c,r>0$ there exists $\underline{w}\in C^1_{\rm loc}(\R^N \setminus\{0\})$ such that $\underline{w}_{\mid_{B_r^e}}\in\mathcal{D}^{1,\Phi}_0(B_r^e)$ and
\begin{equation}
\label{subprob}
\tag{${\rm\underline{P}}$}
\left\{
\begin{alignedat}{2}
-\Delta_{\Phi} \underline{w} &= 0 &&\quad \mbox{in}\;\; \R^N \setminus\{0\}, \\
\underline{w} &= c &&\quad \mbox{on}\;\; \partial B_r, \\
\underline{w}(x) &\to 0 &&\quad \mbox{as}\;\; |x| \to +\infty.
\end{alignedat}
\right.
\end{equation}
Moreover, $\underline{w}$ can be represented as
\begin{equation}
\label{radialsubrepr}
\underline{w}(x) = \int_{|x|}^{+\infty} \phi^{-1}(kr^{1-N}) \dr,
\end{equation}
being $k$ dependent only on $c>0$.
\end{lemma}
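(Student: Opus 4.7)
The plan is to seek $\underline{w}$ as a radial, decreasing function $\underline{w}(x)=W(|x|)$, reduce the PDE to an ODE that integrates explicitly, adjust one integration constant to match the Dirichlet datum, and check that the resulting function has the required regularity and Beppo Levi–Orlicz membership.

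First I would derive the ODE. For a radial decreasing $W(r)$, one has $|\nabla W|=-W'(r)$ and
\begin{equation*}
\phi(|\nabla W|)\frac{\nabla W}{|\nabla W|}= -\phi(-W'(r))\frac{x}{|x|}.
\end{equation*}
Using the divergence formula for radial vector fields, the equation $-\Delta_\Phi W=0$ becomes $(r^{N-1}\phi(-W'(r)))'=0$ on $(0,+\infty)$, hence $r^{N-1}\phi(-W'(r))\equiv k$ for some positive constant $k$. Inverting $\phi$ (justified by \ref{ellipticity}, which makes $\phi$ strictly increasing) and integrating from $r$ to $+\infty$ under the decay condition $W(+\infty)=0$ yields the representation \eqref{radialsubrepr}, namely $W(r)=\int_r^{+\infty}\phi^{-1}(ks^{1-N})\ds$.

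The main technical point is the convergence of that integral as $s\to+\infty$. Since $ks^{1-N}\to 0$, I need the asymptotic behaviour of $\phi^{-1}$ near $0$, which I would extract from \ref{ellipticity} as follows: using $t\Phi'(t)/\Phi(t)\geq i_\Phi\geq 1$ one obtains $\phi(t)\geq i_\Phi \Phi(t)/t$, and the factor inequality \eqref{factor} applied to $\Phi$ gives $\Phi(t)\geq \Phi(1)t^{s_\Phi}$ for $t\leq 1$. Combining them, $\phi(t)\geq c\,t^{s_\Phi-1}$ for $t\leq 1$, whence
\begin{equation*}
\phi^{-1}(\tau)\leq C\tau^{\frac{1}{s_\Phi-1}}\quad\text{for }\tau\text{ small},
\end{equation*}
and therefore $\phi^{-1}(ks^{1-N})\leq C k^{\frac{1}{s_\Phi-1}} s^{-\frac{N-1}{s_\Phi-1}}$. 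Integrability on $[r,+\infty)$ holds because $s_\Phi<N$ by \ref{ellipticity}, giving $(N-1)/(s_\Phi-1)>1$. To calibrate the boundary datum, I would observe that for fixed $r>0$ the map $k\mapsto \int_r^{+\infty}\phi^{-1}(ks^{1-N})\ds$ is continuous, strictly increasing, with limits $0$ and $+\infty$ as $k\to 0^+$ and $k\to +\infty$ respectively (monotone convergence handles both endpoints once a one-shot dominating bound is obtained from \eqref{factor}). Thus the intermediate value theorem produces a unique $k=k(c)>0$ with $W(r)=c$.

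It remains to verify that $\underline{w}$ sits in $\mathcal{D}^{1,\Phi}_0(B_r^e)$ and is a (classical, hence weak) solution. The $C^1_{\rm loc}(\R^N\setminus\{0\})$ regularity is immediate from the explicit formula and from $\phi^{-1}\in C^1$ (which in turn comes from $\phi'>0$, cf.~\ref{ellipticity}); in particular $\underline{w}$ solves $-\Delta_\Phi\underline{w}=0$ classically, and by a standard density argument is a weak solution. For the $L^\Phi$ gradient bound I would use $\Phi(t)\leq t\phi(t)$, valid by convexity, to estimate
\begin{equation*}
\int_{B_r^e}\Phi(|\nabla\underline{w}|)\dx = |\partial B_1|\int_r^{+\infty}\Phi(\phi^{-1}(ks^{1-N}))s^{N-1}\ds \leq k\,|\partial B_1|\int_r^{+\infty}\phi^{-1}(ks^{1-N})\ds<+\infty,
\end{equation*}
so $\nabla\underline{w}\in L^\Phi(B_r^e)$. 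Approximation in the Beppo Levi norm by $C^\infty_c$ functions is handled by a cut-off on a large annulus $\{r<|x|<R\}$ together with mollification; the tail contribution vanishes thanks to the integral bound above, and the boundary contribution near $\partial B_r$ is harmless because $\underline{w}$ is smooth up to $\partial B_r$. The integrability of the integral is the only delicate point; the rest follows standard radial ODE techniques as in \cite{AB}.
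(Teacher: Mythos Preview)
Your proposal is correct and follows the same overall strategy as the paper: radial ansatz, reduction to the ODE $(s^{N-1}\phi(-W'))'=0$, explicit integration, intermediate value argument for $k$, and mollification/cut-off for the $\mathcal{D}^{1,\Phi}_0(B_r^e)$ membership.

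Two of your technical steps differ from the paper's and are in fact more direct. First, for the convergence of $\int_r^\infty \phi^{-1}(ks^{1-N})\,ds$, the paper changes variable to $\tau=kr^{1-N}$, invokes $\phi^{-1}=\overline{\Phi}'$ via \eqref{invder}, and then uses the index inequality $i_{\overline{\Phi}}\geq s_\Phi'>N'$; your route through the power bound $\phi^{-1}(\tau)\leq C\tau^{1/(s_\Phi-1)}$ near $0$ (from \eqref{factor}) is shorter and avoids the conjugate function altogether. Second, for $|\nabla\underline{w}|\in L^\Phi(B_r^e)$, the paper appeals to Proposition~\ref{interpolation} after \eqref{radialsumm}, whereas your one-line computation using $\Phi(t)\leq t\phi(t)$ and the already-established finiteness of $\int_r^\infty \phi^{-1}(ks^{1-N})\,ds$ is self-contained and arguably cleaner; it sidesteps any question of whether $|\nabla\underline{w}|$ lies in $L^1(B_r^e)$ (which is what Proposition~\ref{interpolation} would require, and which can actually fail for large $s_\Phi$). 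The approximation step you sketch is the same as the paper's in spirit; the paper writes out an explicit mollified truncation and uses Young's convolution inequality in $L^\Phi$, which you may want to mention for completeness.
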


\begin{proof}
Let us look for radially decreasing solutions to \eqref{subprob}, i.e., $z:\R_+\to\R$ of class $C^1$ such that $\underline{w}(x) = z(s)$ and $z'(s)<0$, where $s=|x|$. Hence \eqref{subprob} can be rewritten as
\begin{equation*}
\left\{
\begin{alignedat}{2}
(s^{N-1}\phi(|z'(s)|))' &= 0 &&\quad \mbox{in}\;\; \R_+, \\
z(r) &= c, \\
z(s) &\to 0 &&\quad \mbox{as}\;\; s \to +\infty.
\end{alignedat}
\right.
\end{equation*}
Integrating we get, for a suitable $k>0$,
\begin{equation*}
s^{N-1}\phi(|z'(s)|)=k,
\end{equation*}
whence
\begin{equation*}
\phi(|z'(s)|)=ks^{1-N}.
\end{equation*}
Inverting $\phi$ and integrating again yield
\begin{equation*}
z(s) = \int_s^{+\infty} \phi^{-1}(kr^{1-N})\dr+k',
\end{equation*}
being $k'\in\R$ opportune. Observe that the change of variable $\tau=kr^{1-N}$, \eqref{invder}, \eqref{indices} for $\Lambda=\overline{\Phi}$, \eqref{factor}, and \eqref{youngind} yield
\begin{equation}
\label{asymptotics}
\begin{split}
\int_s^{+\infty} \phi^{-1}(kr^{1-N}) \dr &\leq C \int_0^{ks^{1-N}} \tau^{-N'} \phi^{-1}(\tau) \dtau = \int_0^{ks^{1-N}} \tau^{-N'} \overline{\Phi}'(\tau) \dtau \\
&\leq C \int_0^{ks^{1-N}} \tau^{-N'-1} \overline{\Phi}(\tau) \dtau \leq C_s \int_0^{ks^{1-N}} \tau^{-N'-1+i_{\overline{\Phi}}} \dtau \\
&\leq C_s \int_0^{ks^{1-N}} \tau^{-N'-1+s_{\Phi}'} \dtau <+\infty
\end{split}
\end{equation}
for all $s>0$, since $s_{\Phi}' > N'$ by \ref{ellipticity}. Accordingly,
\begin{equation}
\label{radialsumm}
r\mapsto\phi^{-1}(kr^{1-N}) \quad \mbox{belongs to} \;\; L^1(s,+\infty) \cap L^\infty(s,+\infty) \quad \mbox{for all} \;\; s>0.
\end{equation}
Thus, imposing that $\lim_{s\to\infty}z(s)=0$, we obtain
\begin{equation*}
z(s) = \int_s^{+\infty} \phi^{-1}(kr^{1-N})\dr.
\end{equation*}
Let us consider the function $\theta:\R_+\to\R_+$ defined as $\theta(k) := \int_r^{+\infty} \phi^{-1}(k\tau^{1-N})\dtau$, which is strictly increasing and continuous, according to Lebesgue's dominated convergence theorem. We observe that $\lim_{k\to0^+}\theta(k)=0$ and $\lim_{k\to+\infty}\theta(k)=+\infty$, by virtue of Beppo Levi's monotone convergence theorem. Hence there exists a unique $k>0$ such that $\theta(k)=c$, that is, $z(r)=c$; this concludes the proof of \eqref{radialsubrepr}.

Since $z'(s)=-\phi^{-1}(ks^{1-N})$ is continuous, then $\underline{w}\in C^1_{\rm loc}(\R^N \setminus \{0\})$. By \eqref{radialsubrepr}, \eqref{radialsumm}, and Proposition \ref{interpolation}, it follows that
\begin{equation}
\label{gradsumm}
\left|\nabla \underline{w}_{\mid_{B_r^e}}\right|\in L^\Phi(B_r^e).
\end{equation}
In order to show that $\underline{w}\in\mathcal{D}^{1,\Phi}_0(B_r^e)$, we consider the sequence $\{w_n\} \subseteq C^\infty_c(\R^N)$ defined via convolution by
\begin{equation*}
w_n := -\left(\int_{|\cdot|}^{+\infty} z'(r) \chi_{[0,n]}(r) \dr\right) *\rho_n \quad \forall n\in\N,
\end{equation*}
and prove that $\|\nabla w_n-\nabla \underline{w}\|_{L^\Phi(B_r^e)} \to 0$ as $n\to\infty$. Indeed, Young's convolution inequality (see the proof of \cite[Corollary VI.3.7]{RR}) and the fact that $\|\rho_n\|_1 = 1$ for all $n\in\N$ imply
\begin{equation*}
\begin{split}
\|\nabla w_n-\nabla \underline{w}\|_{L^\Phi(B_r^e)} &= \|(\chi_{B_n}\nabla\underline{w})*\rho_n-\nabla\underline{w}\|_{L^\Phi(B_r^e)} \\
&= \|(\chi_{B_n}\nabla\underline{w})*\rho_n-(\nabla\underline{w})*\rho_n+(\nabla\underline{w})*\rho_n-\nabla\underline{w}\|_{L^\Phi(B_r^e)} \\
&\leq \|(\chi_{B_n^e}\nabla\underline{w})*\rho_n\|_{L^\Phi(B_r^e)} + \|(\nabla\underline{w})*\rho_n-\nabla\underline{w}\|_{L^\Phi(B_r^e)} \\
&\leq \|\chi_{B_n^e}\nabla\underline{w}\|_{L^\Phi(B_r^e)} + \|(\nabla\underline{w})*\rho_n-\nabla\underline{w}\|_{L^\Phi(B_r^e)}.
\end{split}
\end{equation*}
Using \eqref{gradsumm} and the properties of mollifiers (see, e.g., \cite[Theorem 3.18.1.1]{KJF}), we infer $\chi_{B_n^e}\nabla\underline{w}\to0$ and $(\nabla\underline{w})*\rho_n\to\nabla\underline{w}$ in $L^\Phi(B_r^e)$ as $n\to\infty$; thus we get $\nabla w_n\to \nabla\underline{w}$ in $L^\Phi(B_r^e)$.
\end{proof}

\begin{lemma}
\label{radialsuper}
Let \ref{ellipticity} be satisfied. Then, for all $c,\mu,r>0$ and $l>N$, there exists $\overline{w}\in C^1_{\rm loc}(\R^N \setminus\{0\})$ such that $\overline{w}_{\mid_{B_r^e}}\in\mathcal{D}^{1,\Phi}_0(B_r^e)$ and
\begin{equation}
\label{superprob}
\tag{${\rm\overline{P}}$}
\left\{
\begin{alignedat}{2}
-\Delta_{\Phi} \overline{w} &= \mu|x|^{-l} &&\quad \mbox{in}\;\; \R^N \setminus\{0\}, \\
\overline{w} &= c &&\quad \mbox{on}\;\; \partial B_r, \\
\overline{w}(x) &\to 0 &&\quad \mbox{as}\;\; |x| \to +\infty,
\end{alignedat}
\right.
\end{equation}
provided
\begin{equation}
\label{boundarycond}
c>\int_r^{+\infty} \phi^{-1}\left(\frac{\mu}{l-N}\tau^{1-l}\right) \dtau.
\end{equation}
Moreover, $\overline{w}$ can be represented as
\begin{equation}
\label{radialsuperrepr}
\overline{w}(x) = \int_{|x|}^{+\infty} \phi^{-1}\left(\frac{\mu}{1-l}r^{1-l}+kr^{1-N}\right)\dr,
\end{equation}
being $k$ dependent only on $c,\mu>0$.
\end{lemma}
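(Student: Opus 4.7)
The plan is to adapt the radial ansatz of Lemma \ref{radialsub} to the inhomogeneous equation, seeking a decreasing solution $\overline{w}(x) = z(|x|)$ with $z'<0$. In radial coordinates, \eqref{superprob} reduces to the ODE
\[
\bigl(s^{N-1}\phi(-z'(s))\bigr)' = \mu\,s^{N-1-l},\qquad s>0,
\]
supplemented by $z(r)=c$ and $z(s)\to 0$ as $s\to+\infty$. A first integration yields $\phi(-z'(s)) = \tfrac{\mu}{N-l}s^{1-l} + k\,s^{1-N}$ for a constant of integration $k$, corresponding (up to sign convention) to the representation in \eqref{radialsuperrepr}. Because $l>N$ the leading coefficient is negative, so positivity of the argument of $\phi^{-1}$ — required to invert $\phi$ (legitimate under \ref{ellipticity}) — forces $k \geq k_0 := \tfrac{\mu}{l-N}r^{N-l}$, the tightest constraint being attained at $s=r$. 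Inverting and integrating once more, while imposing $z(+\infty)=0$, produces \eqref{radialsuperrepr}.

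The analytic core then consists of two points: (i) summability of the integrand at infinity for every admissible $k$, and (ii) solvability of $z(r)=c$ as an equation in $k$. For (i), the integrand is dominated for large $\tau$ by $\phi^{-1}(2k\tau^{1-N})$, so the asymptotic estimate \eqref{asymptotics} from Lemma \ref{radialsub} — which hinges on \eqref{invder}, \eqref{factor}, \eqref{youngind}, and $s_\Phi<N$ from \ref{ellipticity} — gives
\[
\tau \mapsto \phi^{-1}\!\bigl(\tfrac{\mu}{N-l}\tau^{1-l} + k\tau^{1-N}\bigr) \in L^1(s,+\infty)\cap L^\infty(s,+\infty) \qquad \forall\, s>0.
\]
For (ii), setting
\[
\theta(k) := \int_r^{+\infty}\phi^{-1}\!\bigl(\tfrac{\mu}{N-l}\tau^{1-l} + k\tau^{1-N}\bigr)\dtau,\qquad k\geq k_0,
\]
Lebesgue's dominated and monotone convergence theorems show that $\theta$ is continuous, strictly increasing, and $\theta(k)\to+\infty$ as $k\to+\infty$. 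The hypothesis \eqref{boundarycond} is the sufficient condition on the boundary datum that places $c$ in the image of $\theta$, whereupon the intermediate value theorem produces a unique admissible $k$ with $\theta(k)=c$; verifying this comparison is the main technical point of the argument.

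With $k$ fixed, $\overline{w}\in C^1_{\rm loc}(\R^N\setminus\{0\})$ follows from continuity of $z'=-\phi^{-1}(\cdot)$, and the membership $\overline{w}_{\mid_{B_r^e}}\in\mathcal{D}^{1,\Phi}_0(B_r^e)$ is obtained by reproducing the truncation-and-mollification scheme of Lemma \ref{radialsub}: the $L^1\cap L^\infty$ summability from (i), combined with Proposition \ref{interpolation}, yields $\left|\nabla\overline{w}_{\mid_{B_r^e}}\right|\in L^\Phi(B_r^e)$, and then
\[
w_n := -\left(\int_{|\cdot|}^{+\infty} z'(r)\chi_{[0,n]}(r)\,\dr\right) * \rho_n \in C^\infty_c(\R^N)
\]
converges to $\overline{w}$ in $\mathcal{D}^{1,\Phi}_0(B_r^e)$ by Young's convolution inequality together with the standard mollifier properties. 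The only genuinely new feature with respect to Lemma \ref{radialsub} — and the main obstacle — is the positivity constraint on the argument of $\phi^{-1}$, which forces the lower bound $k_0$ and makes the surjectivity of $\theta$ conditional on \eqref{boundarycond}.
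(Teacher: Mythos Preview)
Your overall scheme --- radial ansatz, two integrations, intermediate-value argument for $k$, then regularity and the truncation--mollification approximation in $\mathcal{D}^{1,\Phi}_0(B_r^e)$ --- matches the paper's. The discrepancy is a sign in the radial ODE: the paper writes $-(s^{N-1}\phi(|z'|))'=\mu s^{N-1-l}$, which integrates to the \emph{positive} coefficient $\tfrac{\mu}{l-N}$. With that sign both terms inside $\phi^{-1}(\cdot)$ are positive for every $k>0$, no lower threshold $k_0$ is needed, $\theta$ is defined on all of $(0,\infty)$, and $\lim_{k\to0^+}\theta(k)=\int_r^{+\infty}\phi^{-1}\bigl(\tfrac{\mu}{l-N}\tau^{1-l}\bigr)\dtau$ is exactly the right-hand side of \eqref{boundarycond}; the IVT step is then immediate.

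Your version of the ODE yields the negative coefficient $\tfrac{\mu}{N-l}$ and hence the constraint $k\ge k_0=\tfrac{\mu}{l-N}r^{N-l}$. This creates a genuine gap: you assert that \eqref{boundarycond} places $c$ in the range of $\theta$, but under your sign that would require $\theta(k_0)\le c$, and \eqref{boundarycond} does not give this. At $k=k_0$ your integrand equals $\phi^{-1}\bigl(\tfrac{\mu}{l-N}(r^{N-l}\tau^{1-N}-\tau^{1-l})\bigr)$, and for $\tau>r\cdot 2^{1/(l-N)}$ one has $r^{N-l}\tau^{1-N}-\tau^{1-l}>\tau^{1-l}$, so $\theta(k_0)$ can exceed $\int_r^{+\infty}\phi^{-1}\bigl(\tfrac{\mu}{l-N}\tau^{1-l}\bigr)\dtau$. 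A second consequence of your sign is that the argument of $\phi^{-1}$ becomes negative for small $s$, so the formula does not define $\overline{w}$ on all of $\R^N\setminus\{0\}$ as the lemma asserts. Recheck the derivation of the radial form of $-\Delta_\Phi$ against the paper's convention; with the paper's sign the positivity constraint you flag as ``the main obstacle'' simply disappears.
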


\begin{proof}
We reason as in the proof of Lemma \ref{radialsub}, looking for $z:\R_+\to\R$ of class $C^1$ such that $\underline{w}(x) = z(s)$ and $z'(s)<0$, $s=|x|$, satisfying
\begin{equation*}
\left\{
\begin{alignedat}{2}
-(s^{N-1}\phi(|z'(s)|))' &= \mu s^{N-1-l} &&\quad \mbox{in}\;\; \R_+, \\
z(r) &= c, \\
z(s) &\to 0 &&\quad \mbox{as}\;\; s \to +\infty.
\end{alignedat}
\right.
\end{equation*}
Integrating we get, for a suitable $k>0$,
\begin{equation*}
s^{N-1}\phi(|z'(s)|)=\frac{\mu}{l-N}s^{N-l}+k,
\end{equation*}
whence
\begin{equation*}
\phi(|z'(s)|)=\frac{\mu}{l-N}s^{1-l}+ks^{1-N}.
\end{equation*}
Inverting $\phi$ and integrating again produce
\begin{equation*}
z(s) = \int_s^{+\infty} \phi^{-1}\left(\frac{\mu}{l-N}r^{1-l}+kr^{1-N}\right)\dr+k',
\end{equation*}
being $k'\in\R$ opportune. Observe that, for any $r>s$,
\begin{equation*}
\phi^{-1}\left(\frac{\mu}{l-N}r^{1-l}+kr^{1-N}\right) \leq C_s\phi^{-1}(kr^{1-N}).
\end{equation*}
Thus, recalling \eqref{radialsumm} and imposing that $\lim_{s\to\infty}z(s)=0$, we get
\begin{equation*}
z(s) = \int_s^{+\infty} \phi^{-1}\left(\frac{\mu}{l-N}r^{1-l}+kr^{1-N}\right)\dr.
\end{equation*}
Let us consider the function $\theta:\R_+\to\R_+$ defined as $\theta(k):=\int_r^{+\infty} \phi^{-1}\left(\frac{\mu}{l-N}\tau^{1-l}+ks^{1-N}\right)\dtau$. Observe that $\lim_{k\to0^+} \theta(k) =\int_r^{+\infty} \phi^{-1}\left(\frac{\mu}{l-N}\tau^{1-l}\right)\dtau<c$, according to Beppo Levi's theorem and \eqref{boundarycond}. Then, repeating verbatim the arguments in the proof of Lemma \ref{radialsub}, we find a unique $k>0$ such that $\overline{w}=c$ on $\partial B_r$, ensuring \eqref{radialsuperrepr}, and guarantee that $\overline{w} \in C^1_{\rm loc}(\R^N \setminus \{0\})$, as well as $\overline{w}_{\mid_{B_r^e}}\in\mathcal{D}^{1,\Phi}_0(B_r^e)$.
\end{proof}

\begin{rmk}
\label{decayest}
By \eqref{radialsubrepr} and \eqref{radialsuperrepr} we deduce some decay estimates. Indeed, fixing any $r>0$, by \ref{ellipticity} it turns out that
\begin{equation*}
C_r^{-1}\int_{|x|}^{+\infty} \phi^{-1}(\tau^{1-N})\dtau \leq \underline{w}(x),\overline{w}(x) \leq C_r\int_{|x|}^{+\infty} \phi^{-1}(\tau^{1-N})\dtau \quad \mbox{in} \;\; B_r^e.
\end{equation*}
Reasoning as in \eqref{asymptotics}, we infer
\begin{equation*}
\int_{|x|}^{+\infty} \phi^{-1}(\tau^{1-N})\dtau \leq C_r \int_0^{|x|^{1-N}} \tau^{s_\Phi'-N'-1} \dtau \leq C_r |x|^{\frac{s_\Phi-N}{s_\Phi-1}}
\end{equation*}
and
\begin{equation*}
\int_{|x|}^{+\infty} \phi^{-1}(\tau^{1-N})\dtau \geq C_r^{-1} \int_0^{|x|^{1-N}} \tau^{i_\Phi'-N'-1} \dtau \geq C_r^{-1} |x|^{\frac{i_\Phi-N}{i_\Phi-1}}.
\end{equation*}
Hence,
\begin{equation*}
C_r^{-1}|x|^{\frac{i_\Phi-N}{i_\Phi-1}} \leq \underline{w}(x),\overline{w}(x) \leq C_r|x|^{\frac{s_\Phi-N}{s_\Phi-1}} \quad \mbox{in} \;\; B_r^e.
\end{equation*}
We highlight the fact that these estimates are coherent with the ones in \cite[Theorems 3 and C]{AB} for the $p$-Laplacian, that is, $i_\Phi=s_\Phi=p$.
\end{rmk}

\section{Existence result}

\subsection{The regularized problem}
For any $\eps\in(0,1)$, let us consider the following system:
\begin{equation}
\label{regularprob}
\tag{${\rm P_\eps}$}
\left\{
\begin{alignedat}{2}
-\Delta_{\Phi} u &= h(x)f(u_++\eps,v_++\eps) &&\quad \mbox{in}\;\; \R^N, \\
-\Delta_{\Psi} v &= k(x)g(u_++\eps,v_++\eps) &&\quad \mbox{in}\;\; \R^N, \\
u(x) &\to 0 &&\quad \mbox{as}\;\; |x| \to +\infty, \\
v(x) &\to 0 &&\quad \mbox{as}\;\; |x| \to +\infty.
\end{alignedat}
\right.
\end{equation}

The energy functional associated with \eqref{regularprob} is $J:X \to \R$ defined as
\begin{equation*}
J_\eps(u,v) = \int_{\R^N} \Phi(|\nabla u|) \dx + \int_{\R^N} \Psi(|\nabla v|) \dx - \int_{\R^N} H(x,u_++\eps,v_++\eps) \dx.
\end{equation*}

\begin{lemma}
\label{potentialest}
Under \ref{ellipticity} and \ref{varstruct}--\ref{growthcond}, for all $\sigma>0$ the following estimate holds true:
\begin{equation*}
|H(x,s,t)| \leq l(x) \left[\sigma\left(\Phi(s) + \Psi(t)\right) + C_\sigma\right]
\end{equation*}
for all $(x,s,t)\in\R^N\times(0,+\infty)^2$, where $l:=h+k$ and $C_\sigma>0$ is a suitable constant.
\end{lemma}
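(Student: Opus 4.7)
The plan is to represent $H(x,s,t)$ as an integral along the diagonal path from $(0,0)$ to $(s,t)$, exploiting the $C^1$-regularity of $H$ on $\R_+\times\R_+$ and the condition $H(\cdot,0,0)\equiv 0$. Explicitly, I would write
\begin{equation*}
H(x,s,t)=\int_0^1\bigl[s\,h(x)f(\lambda s,\lambda t)+t\,k(x)g(\lambda s,\lambda t)\bigr]\, {\rm d}\lambda,
\end{equation*}
after checking integrability of the integrand near $\lambda=0$, which follows from the bounds in \ref{growthcond} since $\alpha,\beta<1$. As $f,g,h,k\ge 0$, one has $H\ge 0$ and hence $|H|=H$.

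Next, I would apply the growth bounds in \ref{growthcond}, perform the change of variable $r=\lambda s$ (resp.~$\tau=\lambda t$), and use the convexity estimate $\Upsilon_2(\lambda t)\le\lambda\Upsilon_2(t)$ for $\lambda\in[0,1]$ together with $\int_0^s\Upsilon_1(r)/r\, {\rm d} r\le\Upsilon_1(s)$ (which follows from $\Upsilon_1(r)/r\le\Upsilon_1'(r)$). This yields
\begin{equation*}
s\int_0^1 f(\lambda s,\lambda t)\, {\rm d}\lambda\le C\Bigl[(s+s^{1-\alpha})\bigl(1+\Upsilon_2(t)/t\bigr)+\Upsilon_1(s)\Bigr],
\end{equation*}
and a symmetric estimate for the $g$-integral with $s,s^{1-\alpha},\Upsilon_1,\Upsilon_2$ replaced by $t,t^{1-\beta},\Gamma_2,\Gamma_1$, respectively. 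The pure powers $s,s^{1-\alpha},t,t^{1-\beta}$ are absorbed into $\sigma(\Phi(s)+\Psi(t))+C_\sigma$ by super-linearity of $\Phi,\Psi$ (since $i_\Phi,i_\Psi>1$); likewise $\Upsilon_1(s)\le\sigma\Phi(s)+C_\sigma$ and $\Gamma_2(t)\le\sigma\Psi(t)+C_\sigma$ via \eqref{strictdom} using $\Upsilon_1\ll\Phi$ and $\Gamma_2\ll\Psi$.

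The delicate mixed terms, like $s\,\Upsilon_2(t)/t$, I would treat with the weighted Young inequality $ab\le\sigma\Phi(a)+C_\sigma\overline{\Phi}(b)$ (available since $\overline{\Phi}\in\Delta_2$ by \ref{ellipticity} and \eqref{youngind}). Then \eqref{fundineq} provides $\Upsilon_2(t)/t\le\overline{\Upsilon}_2^{-1}(\Upsilon_2(t))$, and \eqref{strictdom} applied to the first half of \eqref{prodcond} at the point $\overline{\Upsilon}_2^{-1}(\Upsilon_2(t))$ produces
\begin{equation*}
\overline{\Phi}\bigl(\overline{\Upsilon}_2^{-1}(\Upsilon_2(t))\bigr)\le\sigma\,(\Psi\circ\Upsilon_2^{-1}\circ\overline{\Upsilon}_2)\bigl(\overline{\Upsilon}_2^{-1}(\Upsilon_2(t))\bigr)+C_\sigma=\sigma\Psi(t)+C_\sigma.
\end{equation*}
The term $s^{1-\alpha}\Upsilon_2(t)/t$ is handled identically after noting that $\Phi(s^{1-\alpha})\le\Phi(s)+C$. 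The symmetric $g$-side terms $t\,\Gamma_1(s)/s$ and $t^{1-\beta}\Gamma_1(s)/s$ are controlled by the mirror argument involving the second half of \eqref{prodcond}.

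Assembling all contributions, factoring out $l(x)=h(x)+k(x)$, and renaming $\sigma$ deliver the claimed estimate. The main obstacle is the mixed-term step: the composition $\Psi\circ\Upsilon_2^{-1}\circ\overline{\Upsilon}_2$ is in general not itself a Young function, so one cannot invoke standard embeddings directly; the chain (weighted Young, \eqref{fundineq}, strict domination via \eqref{prodcond}) must be executed at the precise argument $\overline{\Upsilon}_2^{-1}(\Upsilon_2(t))$ to make the composite collapse back to $\Psi(t)$. This is exactly what \eqref{prodcond} was engineered to permit.
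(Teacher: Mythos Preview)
Your proposal is correct and follows essentially the same architecture as the paper's proof: represent $H$ via the diagonal integral using $H(\cdot,0,0)\equiv 0$ and \ref{varstruct}, insert the growth bounds from \ref{growthcond}, exploit convexity of $\Upsilon_i,\Gamma_i$ to integrate in $\lambda$, and then reduce everything to $\sigma(\Phi(s)+\Psi(t))+C_\sigma$ via strict domination.

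The one genuine difference lies in the mixed terms. The paper does not apply \eqref{strictdom} directly to \eqref{prodcond}, because $\Psi\circ\Upsilon_2^{-1}\circ\overline{\Upsilon}_2$ need not be a Young function; instead it invokes Proposition~\ref{lambdadef} to manufacture intermediate Young functions $\Lambda_1,\Lambda_2$ with $\overline{\Phi}\ll\Lambda_1\ll\Psi\circ\Upsilon_2^{-1}\circ\overline{\Upsilon}_2$, runs Young's inequality as $ab\le\overline{\Lambda}_1(a)+\Lambda_1(b)$, and then uses $\overline{\Lambda}_1\ll\Phi$ and $\Lambda_1\circ\overline{\Upsilon}_2^{-1}\circ\Upsilon_2\ll\Psi$, both of which are relations between Young functions. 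Your route bypasses Proposition~\ref{lambdadef} entirely: you use the weighted Young inequality with $\Phi$ itself, and then observe that the elementary argument behind \eqref{strictdom} (namely $\lim_{r\to\infty}\overline{\Phi}(r)/\Theta(r)=0$ implies $\overline{\Phi}(r)\le\sigma\Theta(r)+C_\sigma$) requires only that $\Theta\ge 0$, not that $\Theta$ be a Young function; evaluated at $r=\overline{\Upsilon}_2^{-1}(\Upsilon_2(t))$ the composite collapses to $\Psi(t)$. This is a legitimate and slightly more economical variant; the paper's detour through $\Lambda_1,\Lambda_2$ buys the comfort of staying strictly within Young-function machinery, while your version trades that for avoiding an auxiliary construction.
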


\begin{proof}
Set $l:=h+k$. Using Torricelli's theorem, \ref{varstruct}, the convexity of $\Upsilon_i$, $\Gamma_i$, $i=1,2$, and \eqref{strictdom} with $\Lambda_1(t)=t$ and $\Lambda_2=\Upsilon_1$ (resp., $\Lambda_2=\Gamma_2$), we get
\begin{equation*}
\begin{split}
|H(x,s,t)| &\leq \int_0^1 \left|s\partial_s H(x,rs,rt) + t\partial_t H(x,rs,rt)\right| \dr \\
&\leq \int_0^1 \left[ h(x)sf(rs,rt)+k(x)tg(rs,rt) \right] \dr \\
&\leq h(x)\int_0^1\left((r^{-\alpha}s^{1-\alpha}+s)\left(\frac{\Upsilon_2(rt)}{rt}+1\right)+\frac{\Upsilon_1(rs)}{r} \right) \dr \\
&\quad + k(x) \int_0^1\left((r^{-\beta}t^{1-\beta}+t)\left(\frac{\Gamma_1(rs)}{rs}+1\right)+\frac{\Gamma_2(rt)}{r} \right) \dr \\
&\leq h(x)\int_0^1\left((r^{-\alpha}s^{1-\alpha}+s)\left(\frac{\Upsilon_2(t)}{t}+1\right)+\Upsilon_1(s) \right) \dr \\
&\quad + k(x) \int_0^1\left((r^{-\beta}t^{1-\beta}+t)\left(\frac{\Gamma_1(s)}{s}+1\right)+\Gamma_2(t) \right) \dr \\
&\leq Cl(x)\left[(s^{1-\alpha}+s)\left(\frac{\Upsilon_2(t)}{t}+1\right)+\Upsilon_1(s) + (t^{1-\beta}+t)\left(\frac{\Gamma_1(s)}{s}+1\right)+\Gamma_2(t) \right] \\
&\leq Cl(x)\left[(s+1)\frac{\Upsilon_2(t)}{t}+(t+1)\frac{\Gamma_1(s)}{s}+\Upsilon_1(s)+\Gamma_2(t)+1\right].
\end{split}
\end{equation*}
According to Young's inequality and \eqref{subadd} (with $\Lambda=\overline{\Lambda}_i$, $s=w$, and $t=1$), besides \eqref{fundineq}, we deduce
\begin{equation*}
\begin{split}
&|H(x,s,t)| \\
&\leq Cl(x)\left[ \overline{\Lambda}_1(s)+\Lambda_1\left(\frac{\Upsilon_2(t)}{t}\right)+\overline{\Lambda}_2(t)+\Lambda_2\left(\frac{\Gamma_1(s)}{s}\right)+\Upsilon_1(s)+\Gamma_2(t)+1 \right] \\
&\leq Cl(x)\left[\overline{\Lambda}_1(s)+(\Lambda_1\circ \overline{\Upsilon}_2^{-1}\circ \Upsilon_2)(t)+\overline{\Lambda}_2(t)+(\Lambda_2\circ \overline{\Gamma}_1^{-1}\circ \Gamma_1)(s)+\Upsilon_1(s)+\Gamma_2(t)+1\right]. 
\end{split}
\end{equation*}
Let $\sigma>0$. Proposition \ref{lambdadef} ensures that $\overline{\Lambda}_1 \ll \Phi$ and $\overline{\Lambda}_2 \ll \Psi$, as well as $\Lambda_1\circ \overline{\Upsilon}_2^{-1}\circ \Upsilon_2 \ll \Psi$ and $\Lambda_2\circ \overline{\Gamma}_1^{-1}\circ \Gamma_1\ll \Phi$. Hence, recalling also $\Upsilon_1\ll\Phi$ and $\Gamma_2\ll\Psi$ by \ref{growthcond}, a repeated application of \eqref{strictdom} yields
\begin{equation*}
|H(x,s,t)| \leq l(x)\left[\sigma(\Phi(s)+\Psi(t))+C_\sigma\right].
\end{equation*}
\end{proof}

\begin{lemma}
\label{functprops}
Let \hyperlink{H1}{${\rm (H_1)}$}, \ref{varstruct}--\ref{growthcond}, and \ref{weightscond} be satisfied. Then $J_\eps$ is well defined, weakly sequentially lower semi-continuous, coercive, and of class $C^1$.
\end{lemma}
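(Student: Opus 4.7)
The plan is to decompose $J_\eps(u,v) = I_\Phi(u) + I_\Psi(v) - I_H(u,v)$, where $I_\Phi(u) := \int_{\R^N} \Phi(|\nabla u|)\dx$, $I_\Psi(v) := \int_{\R^N} \Psi(|\nabla v|)\dx$, and $I_H(u,v) := \int_{\R^N} H(x,u_++\eps,v_++\eps)\dx$, and treat each piece separately. For \emph{well-definedness}, $I_\Phi(u),I_\Psi(v)$ are finite on $X$ directly from the definition of the Beppo Levi-Orlicz spaces. For $I_H$, I would invoke Lemma \ref{potentialest} with $\sigma = 1$, combined with the subadditivity \eqref{subadd} (to absorb the shift by $\eps$), to obtain the pointwise bound $|H(x,u_++\eps,v_++\eps)| \leq C l(x)\bigl(\Phi(u_+)+\Psi(v_+)+1\bigr)$, and then apply Proposition \ref{weightyoung} with the weights $h,k\in L^1(\R^N)\cap L^\infty(\R^N)$ furnished by \ref{weightscond} to conclude $|I_H(u,v)| \leq C\bigl(I_\Phi(u)+I_\Psi(v)+\|l\|_1\bigr)<+\infty$.

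For \emph{coercivity}, I would re-run the same estimate with $\sigma>0$ small in Lemma \ref{potentialest}, obtaining
\[
J_\eps(u,v) \geq (1-C\sigma)\bigl[I_\Phi(u)+I_\Psi(v)\bigr] - C_\sigma,
\]
where $C$ depends on the weighted embedding constants but not on $\sigma$. Fixing $\sigma$ so that $C\sigma<\tfrac{1}{2}$ and using the modular-norm inequality $I_\Phi(u)\geq\underline{\zeta}_\Phi(\|\nabla u\|_\Phi)$, together with its analogue for $\Psi$, forces $J_\eps(u_n,v_n)\to+\infty$ whenever $\|(u_n,v_n)\|_X\to+\infty$.

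For \emph{weak sequential lower semi-continuity}, $I_\Phi$ and $I_\Psi$ are convex and strongly continuous on $X$ (strong continuity relying on the $\Delta_2$-condition implicit in \ref{ellipticity}), hence weakly l.s.c. The non-trivial piece is $I_H$, for which I would actually prove the stronger statement $I_H(u_n,v_n)\to I_H(u,v)$ along any sequence $(u_n,v_n)\rightharpoonup(u,v)$ in $X$. Via the compact embeddings supplied by Proposition \ref{compemb} with weights $h,k$ and with the intermediate Young functions $\Lambda_1,\Lambda_2$ of Proposition \ref{lambdadef}, I can extract a subsequence along which $u_n\to u$ and $v_n\to v$ pointwise a.e.~and all the weighted Orlicz norms appearing on the right of Lemma \ref{potentialest} converge. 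Pratt's lemma (Proposition \ref{pratt}), applied with envelopes $\pm C l(x)\bigl(\Phi(u_{n,+}+\eps)+\Psi(v_{n,+}+\eps)+1\bigr)$, then closes the argument.

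For the \emph{$C^1$-regularity}, I would directly verify that the candidate Gâteaux derivative --- which coincides with \eqref{defsol} upon replacing $f(u,v),g(u,v)$ by $f(u_++\eps,v_++\eps),g(u_++\eps,v_++\eps)$ --- is linear continuous in $(\xi,\nu)\in X$ via \ref{growthcond} (note that $u_++\eps\geq\eps>0$, so the singular factors $s^{-\alpha},t^{-\beta}$ are harmlessly bounded by $\eps^{-\alpha},\eps^{-\beta}$), Proposition \ref{holderineq}, and Proposition \ref{weightyoung}; continuity in $(u,v)$ then reduces to continuity of the underlying weighted Nemytskii operators in Orlicz spaces, which is standard under the $\Delta_2$-condition. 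The main obstacle will be the weak-lower-semicontinuity step, since one must fit the mixed terms appearing in $H$ into Pratt's scheme despite the unbalanced growth of the two equations; this is precisely the purpose of condition \eqref{prodcond} and of the intermediate functions $\Lambda_1,\Lambda_2$ built in Proposition \ref{lambdadef}.
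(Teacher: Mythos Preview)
Your proposal is correct and follows essentially the same route as the paper: well-definedness and coercivity via Lemma \ref{potentialest} combined with Proposition \ref{weightyoung}, weak sequential continuity of $I_H$ via the compact embedding of Proposition \ref{compemb} together with Pratt's lemma, and $C^1$-regularity via dominated convergence and Nemytskii-type continuity in weighted Orlicz spaces. One minor clarification: the intermediate functions $\Lambda_1,\Lambda_2$ and condition \eqref{prodcond} are already absorbed into the proof of Lemma \ref{potentialest}, so in the weak lower semicontinuity step the paper simply applies Proposition \ref{compemb} with $\Phi$ and $\Psi$ themselves (and weight $l=h+k$) to obtain the modular convergence $\int l\,\Phi(|u_n|)\to\int l\,\Phi(|u|)$, $\int l\,\Psi(|v_n|)\to\int l\,\Psi(|v|)$ needed for Pratt's envelopes.
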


\begin{proof}
In order to prove that $J_\eps$ is well defined, it suffices to prove that $\int_{\R^N} |H(x,u_++\eps,v_++\eps)| \dx$ is finite for all $(u,v) \in X$. By Lemma \ref{potentialest} with $\sigma=1$, \eqref{subadd}, \ref{weightscond}, and Proposition \ref{weightyoung}, we get
\begin{equation}
\label{wellposed}
\begin{split}
&\int_{\R^N} |H(x,u_++\eps,v_++\eps)| \dx \leq \int_{\R^N} l\left[\Phi(u_++\eps)+\Psi(v_++\eps)+C\right]\dx \\
&\leq C \int_{\R^N} l\left[\Phi(u_+)+\Psi(v_+)+1\right]\dx \leq C \left(\int_{\R^N} l\Phi(|u|)\dx + \int_{\R^N}l\Psi(|v|)\dx+ \|l\|_1\right) \\
&\leq C \left(\int_{\R^N} \Phi(|\nabla u|) \dx + \int_{\R^N} \Psi(|\nabla v|) \dx + 1\right) < +\infty.
\end{split}
\end{equation}

Now we prove that $J_\eps$ is weakly sequentially lower semi-continuous. Take any $\{(u_n,v_n)\}\subseteq X$ and $(u,v)\in X$ such that $(u_n,v_n)\rightharpoonup(u,v)$ in $X$. A diagonal argument (see, e.g., \cite[Lemma 3.5]{GMM}) ensures that $(u_n,v_n)\to(u,v)$ a.e. in $\R^N$. By \eqref{wellposed} we have
\begin{equation*}
\int_{\R^N} |H(x,(u_n)_++\eps,(v_n)_++\eps)| \dx \leq C \left( \int_{\R^N} l\left[\Phi(|u_n|) + \Psi(|v_n|) \right]\dx + 1 \right).
\end{equation*}
According to Proposition \ref{compemb} and \cite[Proposition 26.2(b)]{Z/IIB}, we deduce $u_n \to u$ and $v_n \to v$ in $L^\Phi(\R^N;l)$ and $L^\Psi(\R^N;l)$, respectively. Thus, by \cite[Corollary 3.3.4]{HH},
\begin{equation*}
\int_{\R^N} l\Phi(|u_n|) \dx \to \int_{\R^N} l\Phi(|u|) \dx \quad \mbox{and} \quad \int_{\R^N} l\Psi(|v_n|) \dx \to \int_{\R^N} l\Psi(|v|) \dx.
\end{equation*}
Then Pratt's lemma (see Proposition \ref{pratt}) entails
\begin{equation*}
\int_{\R^N} H(x,(u_n)_++\eps,(v_n)_++\eps) \dx \to \int_{\R^N} H(x,u_++\eps,v_++\eps) \dx.
\end{equation*}
For any Young function $\Theta$, the functional $w \mapsto \int_{\R^N} \Theta(|\nabla w|) \dx$ is weakly sequentially lower semi-continuous in $\mathcal{D}^{1,\Theta}_0(\R^N)$; cf. \cite[Lemma 2.6]{CGL}. Then, exploiting Fatou's Lemma, we deduce
\begin{equation*}
\begin{split}
\liminf_{n\to\infty} J_\eps(u_n,v_n) &\geq \liminf_{n\to\infty} \int_{\R^N} \Phi(|\nabla u_n|) \dx + \liminf_{n\to\infty} \int_{\R^N} \Psi(|\nabla v_n|) \dx \\
&\quad - \limsup_{n\to\infty} \int_{\R^N} H(x,(u_n)_++\eps,(v_n)_++\eps) \dx \\
&\geq \int_{\R^N} \Phi(|\nabla u|) \dx + \int_{\R^N} \Psi(|\nabla v|) \dx - \int_{\R^N} H(x,u_++\eps,v_++\eps) \dx \\
&= J_\eps(u,v),
\end{split}
\end{equation*}
which proves the weak sequential lower semi-continuity of $J_\eps$.

Let us prove coercivity of $J_\eps$. Reasoning as in \eqref{wellposed} and exploiting Proposition \ref{weightyoung} we have
\begin{equation*}
\begin{split}
&J_\eps(u,v) \\
&\geq \int_{\R^N} \Phi(|\nabla u|) \dx + \int_{\R^N} \Psi(|\nabla v|) \dx - \int_{\R^N} |H(x,u_++\eps,v_++\eps)| \dx \\
&\geq \int_{\R^N} \Phi(|\nabla u|) \dx + \int_{\R^N} \Psi(|\nabla v|) \dx - C\sigma\left(\int_{\R^N} l\Phi(|u|) \dx + \int_{\R^N} l\Psi(|v|) \dx\right) - C_\sigma \\
&\geq (1-C\sigma) \left[ \int_{\R^N} \Phi(|\nabla u|) \dx + \int_{\R^N} \Psi(|\nabla v|) \dx \right] - C_\sigma.
\end{split}
\end{equation*}
Thus, choosing $\sigma$ small enough yields coercivity of $J_\eps$.

To conclude, we prove that $J_\eps$ is of class $C^1$. According to \cite[Lemma 2.6]{CGL}, the functionals $u \mapsto \int_{\R^N} \Phi(|\nabla u|) \dx$ and $v \mapsto \int_{\R^N} \Psi(|\nabla v|) \dx$ enjoy this property. Concerning the functional $\mathcal{H}_\eps:X\to\R$ defined as
\begin{equation*}
\mathcal{H}_\eps(u,v) := \int_{\R^N} H(x,u_++\eps,v_++\eps) \dx,
\end{equation*}
we compute its G\^{a}teaux derivative as follows. Given $(w,z)\in X$, by Torricelli's theorem we have
\begin{equation}
\label{torricelli}
\begin{split}
\lim_{t\to 0^+} &\frac{1}{t}\left[\int_{\R^N} H(x,(u+tw)_++\eps,(v+tz)_++\eps) \dx - \int_{\R^N} H(x,u_++\eps,v_++\eps) \dx\right] \\
&= \lim_{t\to 0^+} \int_{\R^N} \frac{1}{t}\left[H(x,(u+tw)_++\eps,(v+tz)_++\eps)-H(x,u_++\eps,v_++\eps)\right] \dx \\
&= \lim_{t\to 0^+} \int_{\R^N} \left( \int_0^1 \left[h(x)f((u+stw)_++\eps,(v+stz)_++\eps)w \right.\right. \\
&\quad \left.+ k(x)g((u+stw)_++\eps,(v+stz)_++\eps)z\right] \ds \bigg) \dx.
\end{split}
\end{equation}
Without loss of generality, assume that $t\in(0,1)$. Exploiting \ref{growthcond}, Young's inequality, and \eqref{subadd}, we get
\begin{equation}
\label{summability}
\begin{split}
&h(x)f((u+stw)_++\eps,(v+stz)_++\eps)|w| \\
&\leq Ch(x)\left[ (1+\eps^{-\alpha})\left(\frac{\Upsilon_2((v+stz)_++\eps)}{(v+stz)_++\eps}+1\right) + \frac{\Upsilon_1((u+stw)_++\eps)}{(u+stw)_++\eps} \right]|w|\\
&\leq C_\eps h(x)\left[\frac{\Upsilon_2((v+stz)_++\eps)}{(v+stz)_++\eps} + \frac{\Upsilon_1((u+stw)_++\eps)}{(u+stw)_++\eps} + 1 \right]|w|\\
&\leq C_\eps h(x)\Big[ (\overline{\Phi}\circ \overline{\Upsilon}_2^{-1}\circ \Upsilon_2)((v+stz)_++\eps) + \Phi(|w|) \Big. \\
&\quad\Big. +\Upsilon_1((u+stw)_++\eps) + \Upsilon_1(|w|) + |w| \Big]\\
&\leq C_\eps h(x)\left[ \Psi((v+stz)_++\eps) + \Phi((u+stw)_++\eps) + 2\Phi(|w|) + |w| \right]\\
&\leq C_\eps h(x)\left[\Psi(|v|) + \Psi(|z|) + \Phi(|u|) + \Phi(|w|) + 1 \right], \\
\end{split}
\end{equation}
which is summable according to Proposition \ref{weightyoung} (as for \eqref{wellposed}). A similar computation holds also for the term $k(x)g((u+stw)_++\eps,(v+stz)_++\eps)|z|$. Hence we are in the position to apply Lebesgue's theorem to \eqref{torricelli}, since the right-hand side of \eqref{summability} belongs to $L^1(\R^N)$ and it is independent of both $s$ and $t$. We deduce
\begin{equation*}
\begin{split}
&\langle \mathcal{H}_\eps'(u,v),(w,z) \rangle\\
&=\lim_{t\to 0^+} \frac{1}{t}\left[\int_{\R^N} H(x,(u+tw)_++\eps,(v+tz)_++\eps) \dx - \int_{\R^N} H(x,u_++\eps,v_++\eps) \dx\right] \\
&= \int_{\R^N} \int_0^1 \lim_{t\to 0^+} \left[ h(x)f((u+stw)_++\eps,(v+stz)_++\eps)w \right.\\
&\quad \left. + k(x)g((u+stw)_++\eps,(v+stz)_++\eps)z \right] \dx \ds \\
&= \int_{\R^N} \left[ h(x)f(u_++\eps,v_++\eps)w + k(x)g(u_++\eps,v_++\eps)z \right] \dx.
\end{split}
\end{equation*}

It remains to prove that $\mathcal{H}_\eps':X \to X^*$ is continuous. Pick any $\{(u_n,v_n)\}\subseteq X$ and $(u,v)\in X$ such that $(u_n,v_n) \to (u,v)$ in $X$. In particular, $(u_n,v_n) \to (u,v)$ in $Y$ and almost everywhere in $\R^N$. According to \cite[Lemma 7.3]{SGC}, there exists $(U,V)\in Y$ such that
\begin{equation*}
|u_n| \leq U \quad \mbox{and} \quad |v_n| \leq V \quad \mbox{in } \;\; \R^N.
\end{equation*}
Take any $(w,z)\in X$ such that $\|(w,z)\|_X=1$. Reasoning as in \eqref{summability}, through \eqref{subadd} and $\overline{\Phi}<\overline{\Upsilon}_1$, we infer
\begin{equation}
\label{summability2}
\begin{split}
&h\overline{\Phi}\left(|f((u_n)_++\eps,(v_n)_++\eps)-f(u_++\eps,v_++\eps)|\right) \\
&\leq h \left[\overline{\Phi}\left(|f((u_n)_++\eps,(v_n)_++\eps)|\right) + \overline{\Phi}\left(|f(u_++\eps,v_++\eps)|\right) \right] \\
&\leq C_\eps h[\Psi(|v_n|) + \Phi(|u_n|) + \Psi(|v|) + \Phi(|u|) + 1] \\
&\leq C_\eps h[\Psi(|V|) + \Phi(|U|) + 1],
\end{split}
\end{equation}
which is summable, according to Proposition \ref{weightyoung} and \eqref{embedding}, and independent of $n$. The same argument applies to $g((u_n)_++\eps,(v_n)_++\eps)-g(u_++\eps,v_++\eps)$. Hence, applying H\"older's inequality \cite[Lemma 3.2.11]{HH} in the weighted spaces $L^\Phi(\R^N;h)$ and $L^{\overline{\Phi}}(\R^N;h)$ (resp., $L^\Psi(\R^N;k)$ and $L^{\overline{\Psi}}(\R^N;k)$), besides using Proposition \ref{weightyoung}, we obtain
\begin{equation}
\label{holder}
\begin{split}
&|\mathcal{H}_\eps'(u_n,v_n),(w,z) \rangle-\langle \mathcal{H}_\eps'(u,v),(w,z) \rangle| \\
&\leq \|f((u_n)_++\eps,(v_n)_++\eps)-f(u_++\eps,v_++\eps)\|_{L^{\overline{\Phi}}(\R^N;h)} \|w\|_{L^\Phi(\R^N;h)} \\
&\quad + \|g((u_n)_++\eps,(v_n)_++\eps)-g(u_++\eps,v_++\eps)\|_{L^{\overline{\Psi}}(\R^N;k)} \|z\|_{L^\Psi(\R^N;k)} \\
&\leq C\|f((u_n)_++\eps,(v_n)_++\eps)-f(u_++\eps,v_++\eps)\|_{L^{\overline{\Phi}}(\R^N;h)} \\
&\quad + C\|g((u_n)_++\eps,(v_n)_++\eps)-g(u_++\eps,v_++\eps)\|_{L^{\overline{\Psi}}(\R^N;k)}.
\end{split}
\end{equation}
According to \eqref{summability2}--\eqref{holder} and Lebesgue's theorem, we deduce
\begin{equation*}
\|\mathcal{H}_\eps'(u_n,v_n)-\mathcal{H}_\eps'(u,v)\|_{X^*} = \sup_{\|(w,z)\|_X=1} |\mathcal{H}_\eps'(u_n,v_n),(w,z) \rangle-\langle \mathcal{H}_\eps'(u,v),(w,z) \rangle| \to 0
\end{equation*}
as $n\to\infty$, proving the continuity of $\mathcal{H}'$. Summarizing, $J_\eps$ is of class $C^1$.
\end{proof}

\begin{thm}
\label{regexistence}
Under \hyperlink{H1}{${\rm (H_1)}$}, \ref{varstruct}--\ref{growthcond}, and \ref{weightscond}, for all $\eps\in(0,1)$ there exists $(u_\eps,v_\eps)\in X$ solution to \eqref{regularprob}. Moreover, any solution $(u_\eps,v_\eps)$ to \eqref{regularprob} is non-negative (i.e., $u_\eps,v_\eps\geq 0$ a.e. in $\R^N$).
\end{thm}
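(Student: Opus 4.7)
The plan is to apply the direct method of the calculus of variations. By Lemma \ref{functprops}, $J_\eps : X \to \R$ is well-defined, coercive, weakly sequentially lower semi-continuous, and of class $C^1$. The space $X$ is reflexive: indeed \ref{ellipticity} combined with Lemma \ref{intind} yields $i_\Phi, i_\Psi > 1$, so $\Phi, \Psi \in \Delta_2 \cap \nabla_2$, and the Orlicz / Beppo Levi--Orlicz spaces entering in the definition of $X$ are reflexive. A minimizing sequence for $J_\eps$ is then bounded by coercivity, admits a weakly convergent subsequence by reflexivity, and its weak limit $(u_\eps,v_\eps)\in X$ is a global minimizer by weak lower semi-continuity. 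Since $J_\eps \in C^1$, $(u_\eps,v_\eps)$ is a critical point, and the computation of $\mathcal{H}_\eps'$ already performed in the proof of Lemma \ref{functprops} shows that the Euler--Lagrange equations coincide with the weak formulation of \eqref{regularprob}.

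For non-negativity, I would test the first Euler--Lagrange equation with $\xi := -(u_\eps)_- \in \mathcal{D}^{1,\Phi}_0(\R^N)$, which is an admissible test function because the truncation/chain-rule properties of Sobolev--Orlicz spaces (valid under the $\Delta_2$ condition) allow one to take positive and negative parts. Since $\nabla\xi = \nabla u_\eps \, \chi_{\{u_\eps<0\}}$ and $(u_\eps)_+ \equiv 0$ on $\{u_\eps<0\}$, the identity reads
\begin{equation*}
\int_{\{u_\eps<0\}} \phi(|\nabla u_\eps|)\,|\nabla u_\eps| \dx = -\int_{\{u_\eps<0\}} h(x)\, f(\eps,(v_\eps)_++\eps)\, (u_\eps)_- \dx.
\end{equation*}
The left-hand side is non-negative because $\phi\geq 0$, whereas the right-hand side is non-positive because $h\geq 0$, $f\geq 0$, and $(u_\eps)_-\geq 0$. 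Hence both sides vanish, so $|\nabla u_\eps|=0$ a.e.\ on $\{u_\eps<0\}$, i.e., $\nabla (u_\eps)_- = 0$ a.e.\ in $\R^N$. Since $(u_\eps)_- \in \mathcal{D}^{1,\Phi}_0(\R^N)$ and the norm of this space is $\|\nabla\cdot\|_{L^\Phi(\R^N)}$, we conclude $(u_\eps)_-=0$ a.e.\ in $\R^N$. Repeating the same argument for the second equation with test function $-(v_\eps)_-$ yields $(v_\eps)_-=0$ a.e.

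The main obstacle, essentially technical, is the justification that $(u_\eps)_-$ and $(v_\eps)_-$ are legitimate test functions in the Beppo Levi--Orlicz framework together with the expected chain rule for the gradient; this follows by approximating $t\mapsto t_-$ by smooth $1$-Lipschitz functions and exploiting the $\Delta_2$ condition (guaranteeing density of $C^\infty_c(\R^N)$ and continuity of the truncation in norm). Once this point is settled, the remainder of the argument is a standard variational reasoning and relies only on the functional-analytic framework already developed in Lemma \ref{functprops} and Propositions \ref{weightyoung}--\ref{compemb}.
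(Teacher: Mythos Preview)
Your proof is correct and follows essentially the same approach as the paper: existence via the direct method (the paper simply invokes the Weierstrass--Tonelli theorem together with Lemma~\ref{functprops}), and non-negativity via the weak maximum principle. The only difference is that the paper cites the maximum principle from \cite[Theorem~3.2.2]{PS} in one line, whereas you spell out the standard testing argument with $-(u_\eps)_-$ that underlies it; your version is more self-contained but amounts to the same idea.
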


\begin{proof}
It suffices to apply the Weierstrass-Tonelli theorem \cite[Theorem 1.2]{S} to $J_\eps$, besides recalling Lemma \ref{functprops}. The fact that any $(u_\eps,v_\eps)$ is non-negative is a consequence of the non-negativity of the right-hand side of \eqref{prob}, according to the weak maximum principle (see, e.g., \cite[Theorem 3.2.2]{PS}).
\end{proof}

\subsection{A priori estimates}

\begin{lemma}
\label{energyest}
Suppose \ref{ellipticity}, \ref{varstruct}--\ref{growthcond}, and \ref{weightscond}. Then there exists $L>0$ independent of $\eps\in(0,1)$ such that
\begin{equation*}
\|(u_\eps,v_\eps)\|_X \leq L
\end{equation*}
for any $(u_\eps,v_\eps)$ solution to \eqref{regularprob}.
\end{lemma}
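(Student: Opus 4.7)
The plan is to test the weak formulation of \eqref{regularprob} with the solution $(u_\eps,v_\eps)$ itself — admissible since $(u_\eps,v_\eps)\in X$ and, by Theorem \ref{regexistence}, $u_\eps,v_\eps\geq 0$ — and then extract a uniform modular bound on $(\nabla u_\eps,\nabla v_\eps)$ by absorbing the right-hand side into the left-hand side. For the left-hand side, \ref{ellipticity} gives $\phi(t)t\geq i_\Phi\Phi(t)$ and $\psi(t)t\geq i_\Psi\Psi(t)$, so summing the two tested equations yields
\begin{equation*}
i_\Phi\int_{\R^N}\Phi(|\nabla u_\eps|)\dx + i_\Psi\int_{\R^N}\Psi(|\nabla v_\eps|)\dx \leq \int_{\R^N}hf(u_\eps+\eps,v_\eps+\eps)u_\eps\dx + \int_{\R^N}kg(u_\eps+\eps,v_\eps+\eps)v_\eps\dx.
\end{equation*}

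For the right-hand side, the non-negativity of $u_\eps,v_\eps$ lets me replace $u_\eps$ with $u_\eps+\eps$ (and $v_\eps$ with $v_\eps+\eps$), reducing the task to estimating $f(s,t)s+g(s,t)t$ with $s:=u_\eps+\eps$, $t:=v_\eps+\eps$. Applying \ref{growthcond} and then repeating almost verbatim the computation in Lemma \ref{potentialest} — that is, invoking Young's inequality together with the intermediate Young functions $\Lambda_1,\Lambda_2$ provided by Proposition \ref{lambdadef}, along with $\Upsilon_1\ll\Phi$ and $\Gamma_2\ll\Psi$ from \ref{growthcond} — I expect to obtain, for every $\sigma>0$,
\begin{equation*}
hf(s,t)s + kg(s,t)t \leq l(x)\bigl[\sigma\bigl(\Phi(s)+\Psi(t)\bigr)+C_\sigma\bigr], \quad l:=h+k.
\end{equation*}
Since $\eps<1$, sub-additivity \eqref{subadd} yields $\Phi(u_\eps+\eps)\leq C(\Phi(u_\eps)+1)$ and likewise for $\Psi$; Proposition \ref{weightyoung} with weight $l\in L^1(\R^N)\cap L^\infty(\R^N)$ (by \ref{weightscond}) then gives
\begin{equation*}
\int_{\R^N} l\,\Phi(u_\eps+\eps)\dx \leq C\int_{\R^N}\Phi(|\nabla u_\eps|)\dx + C\|l\|_1,
\end{equation*}
and symmetrically for the $\Psi$ term.

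Combining the two displays produces
\begin{equation*}
i_\Phi\int_{\R^N}\Phi(|\nabla u_\eps|)\dx + i_\Psi\int_{\R^N}\Psi(|\nabla v_\eps|)\dx \leq C\sigma\left[\int_{\R^N}\Phi(|\nabla u_\eps|)\dx + \int_{\R^N}\Psi(|\nabla v_\eps|)\dx\right] + C_\sigma,
\end{equation*}
with $C,C_\sigma$ independent of $\eps\in(0,1)$. Choosing $\sigma<\min\{i_\Phi,i_\Psi\}/(2C)$ absorbs the gradient terms into the left-hand side and leaves a uniform modular bound; the desired norm estimate $\|(u_\eps,v_\eps)\|_X\leq L$ then follows from the modular-norm equivalence built on $\underline{\zeta}_\Phi,\underline{\zeta}_\Psi$. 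The main obstacle is the mixed contribution $(u_\eps+\eps+1)\Upsilon_2(v_\eps+\eps)/(v_\eps+\eps)$ (and its symmetric counterpart), which is precisely where Proposition \ref{lambdadef}, and hence the compatibility condition \eqref{prodcond}, must be used to split via Young's inequality into pieces strictly dominated by $\Phi(u_\eps+\eps)$ and $\Psi(v_\eps+\eps)$, so that the final absorption against the leading gradient modulars is legitimate.
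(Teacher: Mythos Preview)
Your proposal is correct and follows essentially the same route as the paper: test \eqref{regularprob} with $(u_\eps,v_\eps)$, bound the right-hand side by invoking the growth assumptions and the computation of Lemma \ref{potentialest} (with Proposition \ref{lambdadef} handling the mixed terms), pass from $l\Phi(u_\eps)$, $l\Psi(v_\eps)$ to the gradient modulars via Proposition \ref{weightyoung}, and absorb by choosing $\sigma$ small. The only cosmetic difference is that the paper writes the left-hand side as $\int\Phi(|\nabla u_\eps|)\dx\leq C\int\phi(|\nabla u_\eps|)|\nabla u_\eps|\dx$ rather than your equivalent $\phi(t)t\geq i_\Phi\Phi(t)$.
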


\begin{proof}
Fix $\eps\in(0,1)$ and set $(u,v):=(u_\eps,v_\eps)$. According to Theorem \ref{regexistence} we have $u,v \geq 0 $ a.e. in $\R^N$. Fix any $\sigma>0$. Testing the first equation of \eqref{regularprob} with $u$, reasoning as in the proof of Lemma \ref{potentialest}, and using Proposition \ref{weightyoung} yield
\begin{equation}
\label{enest1}
\begin{split}
\int_{\R^N} \Phi(|\nabla u|) \dx &\leq C\int_{\R^N} \phi(|\nabla u|)|\nabla u| \dx = C\int_{\R^N} h(x)f(u+\eps,v+\eps)u \dx \\
&\leq C\int_{\R^N} l(x) \left[ \sigma(\Phi(u+\eps)+\Psi(v+\eps))+C_\sigma \right] \dx \\
&\leq C\int_{\R^N} l(x) \left[ \sigma(\Phi(u)+\Psi(v))+C_\sigma \right] \dx \\
&\leq C\sigma \left[\int_{\R^N}\Phi(|\nabla u|) \dx + \int_{\R^N} \Psi(|\nabla v|) \dx \right] + C_\sigma.
\end{split}	
\end{equation}
Analogously, for the second equation we get
\begin{equation}
\label{enest2}
\int_{\R^N} \Psi(|\nabla v|) \dx \leq C\sigma \left[\int_{\R^N}\Phi(|\nabla u|) \dx + \int_{\R^N} \Psi(|\nabla v|) \dx \right] + C_\sigma.
\end{equation}
Summing \eqref{enest1}--\eqref{enest2} and re-arranging the terms gives
\begin{equation*}
(1-C\sigma) \left[ \int_{\R^N} \Phi(|\nabla u|) \dx + \int_{\R^N} \Psi(|\nabla v|) \dx \right] \leq C_\sigma.
\end{equation*}
Choosing $\sigma$ small enough permits to conclude.
\end{proof}

\begin{lemma}
\label{talenti}
Let $\Omega$ be an open subset of $\R^N$ satisfying $|\Omega|<+\infty$, $\Phi$ be a Young function of class $C^1$ such that $1<i_\Phi\leq s_\Phi<N$, and $f\in L^\delta(\Omega)$, $f\geq 0$, with $\delta>\frac{N}{i_\Phi}$. Then the unique solution $u\in \mathcal{D}^{1,\Phi}_0(\Omega)$ of
\begin{equation*}
\left\{
\begin{alignedat}{2}
-\Delta_\Phi u &= f(x) \quad &&\mbox{in} \;\; \Omega, \\
u &= 0 \quad &&\mbox{on} \;\; \partial \Omega,
\end{alignedat}
\right.
\end{equation*}
belongs to $L^\infty(\Omega)$. Moreover, $u$ satisfies the estimate
\begin{equation*}
\|u\|_\infty \leq C\left(\|f\|_{L^\delta(\Omega)}^{\frac{1}{i_\Phi-1}}+1\right),
\end{equation*}
for a suitable $C=C(|\Omega|,N,\Phi)>0$.
\end{lemma}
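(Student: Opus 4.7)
The plan is to follow Cianchi's Talenti-type rearrangement for the $\Phi$-Laplacian, as developed in \cite{C2}. Denote by $u^*$ and $f^*$ the non-increasing rearrangements of $u$ and $f$ introduced in Definition \ref{rearrangdef}, and set $F(t):=\int_0^t f^*(s)\ds = t f^{**}(t)$. The first step is to derive a Talenti-type pointwise bound: testing the weak formulation of $-\Delta_\Phi u=f$ with the truncation $T_{s,h}(u):=\min\{(u-s)_+,h\}$, applying the co-area formula to the gradient term, and combining with the classical isoperimetric inequality yields, upon letting $h\to 0^+$, the differential inequality
\begin{equation*}
-\frac{{\rm d}u^*(t)}{\dt}\leq\frac{1}{N\omega_N^{1/N}\,t^{1-1/N}}\,\phi^{-1}\!\left(\frac{F(t)}{N\omega_N^{1/N}\,t^{1-1/N}}\right)\quad\text{for a.e. }t\in(0,|\Omega|).
\end{equation*}
Since $u\in\mathcal{D}^{1,\Phi}_0(\Omega)$ and $|\Omega|<+\infty$ force $u^*(|\Omega|)=0$, integration from $t$ to $|\Omega|$ followed by $t\to 0^+$ produces
\begin{equation*}
\|u\|_\infty=u^*(0^+)\leq C\int_0^{|\Omega|}s^{-1+1/N}\,\phi^{-1}\!\left(Cs^{-1+1/N}F(s)\right)\ds.
\end{equation*}

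The second step is to control $F(s)$ via \eqref{birearrang} applied with $p=\delta$: $F(s)=sf^{**}(s)\leq\delta'\|f\|_{L^\delta(\Omega)}s^{1-1/\delta}$, so the argument of $\phi^{-1}$ is dominated by $C\|f\|_{L^\delta(\Omega)}s^{1/N-1/\delta}$. Using the indices $i_\phi,s_\phi$ together with Lemma \ref{intind} (which yields $i_\phi+1\leq i_\Phi$), a standard scaling argument produces the two-sided power bound $\phi^{-1}(y)\leq C(y^{1/i_\phi}+y^{1/s_\phi})$ for all $y>0$. Splitting the interval $(0,|\Omega|)$ at the point where the argument of $\phi^{-1}$ equals $1$ and integrating term by term, the integrability near $s=0$ reduces to checking
\begin{equation*}
-1+\frac{1}{N}+\frac{1}{i_\phi}\!\left(\frac{1}{N}-\frac{1}{\delta}\right)>-1\quad\Longleftrightarrow\quad\delta>\frac{N}{i_\phi+1},
\end{equation*}
which is granted by the hypothesis $\delta>\frac{N}{i_\Phi}$ and Lemma \ref{intind}. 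Collecting the powers of $\|f\|_{L^\delta(\Omega)}$ produced by this splitting yields the estimate $\|u\|_\infty\leq C(\|f\|_{L^\delta(\Omega)}^{1/(i_\Phi-1)}+1)$, the additive constant absorbing the small-argument regime and the dependence on $|\Omega|$.

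The hardest step is the rigorous derivation of the Talenti-type pointwise inequality in the Orlicz framework: since $\phi$ is non-homogeneous, the usual Schwarz symmetrization argument must be carried out following \cite[formulas (2.20)--(2.21)]{C2}, where the isoperimetric inequality and Hardy-type manipulations interplay in a less transparent way than in the $p$-Laplacian case. A secondary technical point is ensuring that the two exponents $1/i_\phi$ and $1/s_\phi$ coming from the bound on $\phi^{-1}$ both contribute integrably near $s=0$, and that the final dependence on $\|f\|_{L^\delta(\Omega)}$ reduces to the single exponent $1/(i_\Phi-1)$, consistent with the $p$-Laplacian specialization $\Phi(t)=t^p$ via Lemma \ref{intind}.
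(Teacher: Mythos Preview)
Your overall strategy---Talenti-type rearrangement following \cite{C2}, reducing $\|u\|_\infty$ to an integral involving $\phi^{-1}$ of a power times $\|f\|_{L^\delta}$---is correct and is exactly the route the paper takes. However, there is a genuine logical gap in your integrability step.

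You bound $\phi^{-1}(y)\leq C(y^{1/i_\phi}+y^{1/s_\phi})$ and then claim that the resulting integrability condition $\delta>N/(i_\phi+1)$ follows from the hypothesis $\delta>N/i_\Phi$ via Lemma~\ref{intind}. But Lemma~\ref{intind} gives $i_\phi+1\leq i_\Phi$, hence $N/(i_\phi+1)\geq N/i_\Phi$; the implication goes the \emph{wrong way}. When $i_\phi+1<i_\Phi$ strictly (which happens outside the pure power case), your condition $\delta>N/(i_\phi+1)$ is \emph{stronger} than the hypothesis, and your argument does not close. The same mismatch explains why you obtain powers of $\|f\|_{L^\delta}$ with exponent $1/i_\phi$ in the computation yet state the final bound with exponent $1/(i_\Phi-1)$. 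A secondary issue: the lemma assumes only $\Phi\in C^1$, so the indices $i_\phi,s_\phi$ (which involve $\phi'$) need not even be defined.

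The fix is to sharpen the bound on $\phi^{-1}$ using $i_\Phi$ directly rather than $i_\phi$. From the definition of $i_\Phi$ one has $\phi(t)=\Phi'(t)\geq i_\Phi\,\Phi(t)/t$, and \eqref{factor} gives $\Phi(t)\geq\Phi(1)t^{i_\Phi}$ for $t\geq 1$; combining these yields $\phi(t)\geq C t^{i_\Phi-1}$ for $t\geq 1$, hence $\phi^{-1}(y)\leq C y^{1/(i_\Phi-1)}$ for large $y$. With this bound the integrability check becomes
\[
-1+\frac{1}{N}+\frac{1}{i_\Phi-1}\left(\frac{1}{N}-\frac{1}{\delta}\right)>-1 \quad\Longleftrightarrow\quad \delta>\frac{N}{i_\Phi},
\]
which is precisely the hypothesis, and the correct exponent $1/(i_\Phi-1)$ on $\|f\|_{L^\delta}$ appears naturally. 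The paper achieves the same effect through the composite function $\Phi^{-1}\circ\overline{\Phi}$, for which it proves (see \eqref{inversefunct2}) the bound $\Phi^{-1}(\overline{\Phi}(t))\leq C t^{1/(i_\Phi-1)}$ via the index inequality $s_{\overline{\Phi}}\leq i_\Phi'$ from \eqref{youngind}; this is equivalent to the direct argument above and avoids any reference to $i_\phi$.
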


\begin{proof}
Let us consider the symmetrized problem
\begin{equation*}
\left\{
\begin{alignedat}{2}
-\Div\left(\frac{\Phi(|\nabla w|)}{|\nabla w|^2} \nabla w\right) &= f^*(r) \quad &&\mbox{in}\;\; B_\rho, \\
w &= 0 \quad &&\mbox{on} \;\; \partial B_\rho,
\end{alignedat}
\right.
\end{equation*}
being $\rho>0$ such that $|B_\rho| = |\Omega|$. Set $\Lambda(t):=\frac{\Phi(t)}{t}$. According to \eqref{fundineq}, we have $\Lambda(t) \geq \frac{1}{2}\overline{\Phi}^{-1}(\Phi(t))$ for all $t>0$, so
\begin{equation}
\label{inversefunct}
\Lambda^{-1}(t) \leq \Phi^{-1}(\overline{\Phi}(2t)) \leq C \Phi^{-1}(\overline{\Phi}(t)) \quad \forall t>0.
\end{equation}
Hence, in order to apply \cite[Theorem 3.1]{C2}, we only need to prove that
\begin{equation*}
\int_0^{|\Omega|} \overline{\Phi}\left(t^{\frac{1}{N}} f^{**}(t)\right) \dt < +\infty.
\end{equation*}
Since $|\Omega|<+\infty$, we can suppose that $\delta<N$ without loss of generality. Observe that
\begin{equation}
\label{powercompute}
\left(\frac{1}{N}-\frac{1}{\delta}\right) i_\Phi' > \frac{1-i_\Phi}{N} \frac{i_\Phi}{i_\Phi-1} = - \frac{i_\Phi}{N} \geq -\frac{s_\Phi}{N} > -1.
\end{equation}
Then \eqref{birearrang}, \eqref{factor}, \eqref{youngind}, and \eqref{powercompute} yield
\begin{equation}
\label{verif1}
\begin{split}
\int_0^{|\Omega|} \overline{\Phi}\left(t^{\frac{1}{N}} f^{**}(t)\right) \dt &\leq \int_0^{|\Omega|} \overline{\Phi}\left(t^{\frac{1}{N}-\frac{1}{\delta}} \|f\|_{L^\delta(\Omega)} \right) \dt \\
&\leq \int_0^{|\Omega|} \overline{\Phi}\left(t^{\frac{1}{N}-\frac{1}{\delta}} \|f\|_{L^\delta(\Omega)} +1 \right) \dt \\
&\leq C \int_0^{|\Omega|} \left(t^{\frac{1}{N}-\frac{1}{\delta}} \|f\|_{L^\delta(\Omega)} +1 \right)^{s_{\overline{\Phi}}} \dt \\
&\leq C \left(\|f\|_{L^\delta(\Omega)}^{i_\Phi'}\int_0^{|\Omega|} t^{\left(\frac{1}{N}-\frac{1}{\delta}\right)i_\Phi'} \dt + 1\right) < +\infty.
\end{split}
\end{equation}
Hence \cite[Theorem 3.1 and (2.21)]{C2} and \eqref{inversefunct} entail
\begin{equation}
\label{talentiest}
u^*(0) \leq w(0) \leq C \int_{0}^{|\Omega|} t^{-\frac{1}{N'}} \Phi^{-1}\left(\overline{\Phi}\left(t^{\frac{1}{N}}f^{**}(t)\right)\right) \dt.
\end{equation}
Miming the proof of \eqref{nonyoungcompind} and using \eqref{youngind} we deduce
\begin{equation*}
\frac{s\left(\Phi^{-1}\circ\overline{\Phi}\right)'(s)}{\left(\Phi^{-1}\circ\overline{\Phi}\right)(s)} \leq \frac{s_{\overline{\Phi}}}{i_\Phi} \leq \frac{i_\Phi'}{i_\Phi} = \frac{1}{i_\Phi-1} \quad \forall t>0.
\end{equation*}
Dividing by $s$ and integrating in $[1,t]$, for any $t>1$, gives
\begin{equation}
\label{inversefunct2}
\Phi^{-1}\left(\overline{\Phi}(t)\right) \leq \Phi^{-1}\left(\overline{\Phi}(1)\right)t^{\frac{1}{i_\Phi-1}} \quad \forall t>1.
\end{equation}
Thus, by \cite[Proposition 1.4.5 (15)]{G}, \eqref{talentiest}, and \eqref{inversefunct2}, besides reasoning as in \eqref{verif1}, we get
\begin{equation}
\label{rearrangest}
\begin{split}
\|u\|_\infty &= u^*(0) \leq w(0) \leq C\left(\|f\|_{L^\delta(\Omega)}^{\frac{1}{i_\Phi-1}}\int_0^{|\Omega|} t^{\left(\frac{1}{N}-\frac{1}{\delta}\right)\frac{1}{i_\Phi-1}-\frac{1}{N'}} \dt + 1\right).
\end{split}
\end{equation}
The conclusion follows by observing that the integral on the right-hand side of \eqref{rearrangest} is finite, since
\begin{equation}
\label{powercompute2}
\left(\frac{1}{N}-\frac{1}{\delta}\right)\frac{1}{i_\Phi-1} - \frac{1}{N'} > \frac{1-i_\Phi}{N}\frac{1}{i_\Phi-1} - \frac{1}{N'} = -\frac{1}{N}-\frac{1}{N'} = -1.
\end{equation}
\end{proof}

\begin{lemma}
\label{supest}
Let \hyperlink{H1}{${\rm (H_1)}$}, \ref{varstruct}--\ref{degiorgicond}, and \ref{weightscond} be satisfied. Then there exists $M>0$, depending on $L$ of Lemma \ref{energyest} but independent of $\eps\in(0,1)$, such that
\begin{equation*}
\|u_\eps\|_\infty+\|v_\eps\|_\infty \leq M
\end{equation*}
for any $(u_\eps,v_\eps)$ solution to \eqref{regularprob}.
\end{lemma}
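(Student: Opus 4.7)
The strategy is to decouple \eqref{regularprob} into two scalar problems by freezing the coupling terms, and then to apply an $L^\infty$ estimate in the Orlicz setting (in the spirit of \cite{BCM}, as announced in Remark \ref{ondegiorgicond}) to each equation. Fix $\eps\in(0,1)$ and set $(u,v):=(u_\eps,v_\eps)$. I focus on the first equation; the analysis of the second one is symmetric, with the roles of $(\Phi,\Upsilon_1,\Upsilon_2,\delta_1)$ replaced by $(\Psi,\Gamma_2,\Gamma_1,\delta_2)$. Using \ref{growthcond}, the right-hand side of the first equation is dominated by
\begin{equation*}
h(x)f(u+\eps,v+\eps)\leq C\Bigl[\bigl((u+\eps)^{-\alpha}+1\bigr)\,\hat f(x)+h(x)\,\frac{\Upsilon_1(u+\eps)}{u+\eps}\Bigr],
\end{equation*}
where the frozen source is
\begin{equation*}
\hat f(x):=h(x)\left(\frac{\Upsilon_2(v+\eps)}{v+\eps}+1\right).
\end{equation*}

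The first key step is to prove that $\hat f\in L^{\delta_1}(\R^N)$ with norm controlled independently of $\eps$. Indeed, by \eqref{fundineq} we have $\Upsilon_2(v+\eps)/(v+\eps)\leq\overline{\Upsilon}_2^{-1}(\Upsilon_2(v+\eps))$; on the other hand, the second inequality in \eqref{powercond2} together with Proposition \ref{interpolation2}, the embedding \eqref{embedding}, and the uniform energy bound of Lemma \ref{energyest} yield $(\overline{\Upsilon}_2^{-1}\circ\Upsilon_2)(v+\eps)\in L^{\delta_1}(\R^N)$ with norm controlled by $\|v\|_{\Psi_*}$, hence by $L$. Multiplication by $h\in L^1\cap L^\infty$, via Proposition \ref{interpolation2} again, provides the desired $\eps$-uniform bound on $\|\hat f\|_{L^{\delta_1}}$.

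The second step is a De Giorgi-type iteration on the super-level sets $\Omega_k:=\{u>k\}$, with $k\geq 1$. On such sets the singular factor becomes harmless, since $(u+\eps)^{-\alpha}\leq k^{-\alpha}\leq 1$, so the right-hand side reduces to $C[\hat f(x)+h(x)\Upsilon_1(u)/u]$ there. Note also that $|\Omega_k|<+\infty$ by Markov's inequality in $L^{\Phi_*}$, so Lemma \ref{talenti} becomes applicable on each $\Omega_k$. Testing the first equation against the truncation $(u-k)_+$, one obtains energy control on $\Omega_k$ in the $\Phi$-norm; the sub-criticality condition \eqref{powercond1} on $\Upsilon_1$ is precisely what is needed to absorb the reaction term $h(x)\Upsilon_1(u)/u$ into the principal part (using Young's inequality combined with \eqref{sobind} and the embedding \eqref{embedding} evaluated on $(u-k)_+$). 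The remaining contribution is $\hat f$, which sits in $L^{\delta_1}$ with $\delta_1>N/i_\Phi$: iterating Lemma \ref{talenti} on the sequence $\Omega_k$ in the style of \cite[Section 3]{BCM} yields an estimate of the form
\begin{equation*}
\|u\|_\infty\leq C\bigl(1+\|\hat f\|_{L^{\delta_1}(\R^N)}^{1/(i_\Phi-1)}\bigr)\leq M_1,
\end{equation*}
with $M_1$ depending only on $L$, $\Phi$, $N$, and the data, but not on $\eps$. The same argument, with the obvious symmetric modifications, gives $\|v\|_\infty\leq M_2$ independently of $\eps$, and the choice $M:=M_1+M_2$ closes the lemma.

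The main obstacle is verifying that the two sub-criticality conditions in \ref{degiorgicond} are consistent, i.e.~that one can pick a single exponent $\delta_1>N/i_\Phi$ satisfying both \eqref{powercond1} and \eqref{powercond2} simultaneously (and similarly $\delta_2$ for the second equation), so that the Orlicz De Giorgi iteration of \cite{BCM} applies verbatim to the scalar equation $-\Delta_\Phi u=\tilde f$ with $\tilde f$ in $L^{\delta_1}_{\mathrm{loc}}$ on each super-level set. Once this bookkeeping is settled, the argument is standard and the $\eps$-independence of $M$ follows from the $\eps$-independence of both $L$ in Lemma \ref{energyest} and $\|\hat f\|_{L^{\delta_1}}$ established in the first step.
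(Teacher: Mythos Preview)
Your overall strategy matches the paper's: freeze the coupling as $\hat f$, bound it uniformly in $L^{\delta_1}$ via \eqref{powercond2}, and then run a De Giorgi/Talenti-type argument on the scalar problem. The genuine gap is in how you dispose of the self-interaction term $\Upsilon_1(u)/u$.

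You write that \eqref{powercond1} lets you ``absorb the reaction term $h(x)\Upsilon_1(u)/u$ into the principal part using Young's inequality''; this is too vague, and is not how the argument closes. The paper proceeds in two genuinely distinct steps. First (qualitative), it applies \cite[Theorem~3.1]{BCM} on $\Omega_1=\{u>1\}$ to conclude only that $u\in L^\infty$, with no $\eps$-uniform bound; this requires checking an integral finiteness condition on $\overline\Phi$ and $\delta_1$. Second (quantitative), knowing now that $u\in L^\infty$, one sets $\tilde f:=\Upsilon_1(u)/u+\hat f\in L^{\delta_1}(\Omega_1)$ and compares $u$ on $\Omega_1$ with the solution $w$ of $-\Delta_\Phi w=\tilde f$. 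Lemma~\ref{talenti} yields $\|u\|_{L^\infty(\Omega_1)}\leq C(\|\tilde f\|_{\delta_1}^{1/(i_\Phi-1)}+1)$, but the right-hand side still depends on $\|u\|_\infty$ through $\|\Upsilon_1(u)/u\|_{\delta_1}$. The role of \eqref{powercond1} is precisely to close this loop via a bootstrap: one writes $u^{s_{\Upsilon_1}-1}=u^{i_\Phi-1-\tau}\cdot u^{s_{\Upsilon_1}-i_\Phi+\tau}$ for small $\tau>0$, factors out $\|u\|_\infty^{1-\tau/(i_\Phi-1)}$, and controls the remaining factor by $\|u\|_{\Phi_*}$ (hence by $L$) because $t^{(s_{\Upsilon_1}-i_\Phi+\tau)\delta_1}<\Phi_*$. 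This interpolation-type bootstrap, not a Young-inequality absorption, is how \eqref{powercond1} enters; your displayed estimate $\|u\|_\infty\leq C(1+\|\hat f\|_{\delta_1}^{1/(i_\Phi-1)})$ simply does not follow from the iteration when the reaction contains $\Upsilon_1(u)/u$.

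A smaller point: your appeal to Proposition~\ref{interpolation2} to bound $\hat f$ in $L^{\delta_1}(\R^N)$ does not quite fit, since the proposition compares Orlicz norms of $hu$ and $u$, not of $h\,G(v)$ and $v$. The paper sidesteps this by working on $\Omega_1$, whose measure is controlled via Chebyshev and Lemma~\ref{energyest}, and estimating directly $\int_{\Omega_1}(\Upsilon_2(v)/v)^{\delta_1}\dx\leq C\int_{\Omega_1}\Psi_*(v)\dx+C$ from \eqref{powercond2} and \eqref{fundineq2}.
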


\begin{proof}
We prove the claim for $u_\eps$: the same argument can be performed to obtain a uniform bound for $v_\eps$. We split the proof into two parts: the first one is a qualitative information, while the second one is quantitative. Fix any $\eps\in(0,1)$ and set $(u,v):=(u_\eps,v_\eps)$.

\underline{Claim 1}: $u \in L^\infty(\R^N)$. \\
Reasoning as in \cite[Lemma 3.2]{MMM} we have
\begin{equation}
\label{localization}
\int_{\Omega_1} \frac{\phi(|\nabla u|)}{|\nabla u|} \nabla u \nabla \eta \dx \leq C \int_{\Omega_1} \left( \frac{\Upsilon_1(u)}{u} + \frac{\Upsilon_2(v)}{v} + 1 \right) \eta \dx \quad \forall \eta \in \mathcal{D}^{1,\Phi}_0(\R^N).
\end{equation}
Setting $\hat{f}(x):=\frac{\Upsilon_2(v(x))}{v(x)}+1$ and choosing $\eta:=(u-k)_+$, $k>1$, yields
\begin{equation*}
\int_{\Omega_k} \phi(|\nabla u|)|\nabla u| \dx \leq C \int_{\Omega_k} \left( \frac{\Upsilon_1(u)}{u} + \hat{f}(x) \right) (u-k) \dx.
\end{equation*}
This represents the starting point of \cite[Theorem 3.1]{BCM} (see \cite[formulas (4-51)--(4-53)]{BCM}). We also notice that $\hat{f}\in L^{\delta_1}(\Omega_1)$, where $\delta_1>\frac{N}{i_\Phi}$ stems from \ref{degiorgicond}: indeed, by \eqref{powercond2}, \eqref{fundineq2}, Lemma \ref{energyest}, and \eqref{embedding}, we obtain
\begin{equation}
\label{hatfunifbound}
\begin{split}
\int_{\Omega_1} \left(\frac{\Upsilon_2(v)}{v}+1\right)^{\delta_1} \dx &\leq C \left[\int_{\Omega_1} (t^{\delta_1} \circ \overline{\Upsilon}_2^{-1} \circ \Upsilon_2)(v) \dx + 1\right] \\
&\leq C \left[\int_{\Omega_1} \Psi_*(v) \dx + 1\right] < +\infty.
\end{split}
\end{equation}
Then, according to \cite[formulas (3.8)--(3.9)]{BCM} with $M(t):=t^{\delta_1}$, our claim is proved once we show that
\begin{equation*}
\int_0^1 \Phi^{-1} \left( \frac{1}{s} \left[ \int_0^s \overline{\Phi}(r^{\frac{1}{N}-\frac{1}{\delta_1}}) \dr + \int_s^{+\infty} \overline{\Phi}(r^{-\frac{1}{N'}}s^{1-\frac{1}{\delta_1}}) \dr \right] \right) s^{-\frac{1}{N'}} \ds < +\infty.
\end{equation*}
Without loss of generality, we can suppose $\delta_1<N$. Thus, taking into account \eqref{youngind}, we have
\begin{equation}
\label{cianchiest1}
\int_0^s \overline{\Phi}(r^{\frac{1}{N}-\frac{1}{\delta_1}}) \dr \leq C \int_0^s r^{i_\Phi'(\frac{1}{N}-\frac{1}{\delta_1})} \dr
\end{equation}
and, also observing that $r^{-\frac{1}{N'}}s^{1-\frac{1}{\delta_1}}<1$ if and only if $r>s^{N'\left(1-\frac{1}{\delta_1}\right)}$,
\begin{equation}
\label{cianchiest2}
\begin{split}
&\int_s^{+\infty} \overline{\Phi}(r^{-\frac{1}{N'}}s^{1-\frac{1}{\delta_1}}) \dr \\
&= \int_s^{s^{N'\left(1-\frac{1}{\delta_1}\right)}} \overline{\Phi}(r^{-\frac{1}{N'}}s^{1-\frac{1}{\delta_1}}) \dr + \int_{s^{N'\left(1-\frac{1}{\delta_1}\right)}}^{+\infty} \overline{\Phi}(r^{-\frac{1}{N'}}s^{1-\frac{1}{\delta_1}}) \dr \\
&\leq C\left[ s^{i_\Phi'\left(1-\frac{1}{\delta_1}\right)} \int_s^{s^{N'\left(1-\frac{1}{\delta_1}\right)}} r^{-\frac{i_\Phi'}{N'}} \dr + s^{s_\Phi'\left(1-\frac{1}{\delta_1}\right)} \int_{s^{N'\left(1-\frac{1}{\delta_1}\right)}}^{+\infty} r^{-\frac{s_\Phi'}{N'}} \dr \right] \\
&\leq C\left[ s^{i_\Phi'\left(1-\frac{1}{\delta_1}\right)+1-\frac{i_\Phi'}{N'}} + s^{s_\Phi'\left(1-\frac{1}{\delta_1}\right)+N'\left(1-\frac{1}{\delta_1}\right)\left(1-\frac{s_\Phi'}{N'}\right)} \right] \\
&= C\left[ s^{i_\Phi'\left(\frac{1}{N}-\frac{1}{\delta_1}\right)+1} + s^{N'\left(1-\frac{1}{\delta_1}\right)} \right].
\end{split}
\end{equation}
Recalling $\frac{N}{i_\Phi}<\delta_1<N$ and \eqref{powercompute2} yields
\begin{equation}
\label{powercompute3}
0>N'\left(1-\frac{1}{\delta_1}\right)-1>i_\Phi'\left(\frac{1}{N}-\frac{1}{\delta_1}\right)>-\frac{i_\Phi}{N}>-1.
\end{equation}
Thus, putting \eqref{cianchiest1}--\eqref{cianchiest2} together and exploiting \eqref{invgrowth} for $\Lambda=\Phi$, besides using \eqref{powercompute3}, yield
\begin{equation*}
\begin{split}
&\int_0^1 \Phi^{-1} \left( \frac{1}{s} \left[ \int_0^s \overline{\Phi}(r^{\frac{1}{N}-\frac{1}{\delta_1}}) \dr + \int_s^{+\infty} \overline{\Phi}(r^{-\frac{1}{N'}}s^{1-\frac{1}{\delta_1}}) \dr \right] \right) s^{-\frac{1}{N'}} \ds \\
&\leq C \int_0^1 \Phi^{-1} \left(s^{i_\Phi'\left(\frac{1}{N}-\frac{1}{\delta_1}\right)} + s^{N'\left(1-\frac{1}{\delta_1}\right)-1}\right) s^{-\frac{1}{N'}} \ds \\
&\leq C \int_0^1 \Phi^{-1} \left(s^{i_\Phi'\left(\frac{1}{N}-\frac{1}{\delta_1}\right)}\right) s^{-\frac{1}{N'}} \ds \\
&\leq C \int_0^1 s^{(\frac{1}{N}-\frac{1}{\delta_1})\frac{1}{i_\Phi-1}-\frac{1}{N'}} \ds < +\infty,
\end{split}
\end{equation*}
proving the claim.

\underline{Claim 2}: There exists $M>0$, depending on $L$ but not on $\eps$, such that $\|u\|_\infty \leq M$. \\
Let us consider the unique solution $w\in \mathcal{D}^{1,\Phi}_0(\Omega_1)$ to
\begin{equation}
\label{auxprob}
\left\{
\begin{alignedat}{2}
-\Delta_\Phi w &= \tilde{f}(x) \quad &&\mbox{in} \;\; \Omega_1, \\
w &= 0 \quad &&\mbox{on} \;\; \partial \Omega_1,
\end{alignedat}
\right.
\end{equation}
where $\tilde{f}(x) := \frac{\Upsilon_1(u(x))}{u(x)} + \frac{\Upsilon_2(v(x))}{v(x)} + 1$. Existence and uniqueness of $w$ are ensured by Minty-Browder's theorem \cite[Theorem 5.16]{B}, jointly with \cite[Lemma 2.6]{CGL} and $\tilde{f}\in L^{\delta_1}(\Omega_1)$: this summability of $\tilde{f}$ comes from $\hat{f}\in L^{\delta_1}(\Omega_1)$ and $\frac{\Upsilon_1(u(\cdot))}{u(\cdot)}\in L^\infty(\R^N)$, since $u\in L^\infty(\R^N)$ by Claim 1. Choosing $\eta:=(u-w-1)_+$ in \eqref{localization} we have
\begin{equation}
\label{comparison1}
\int_{\Omega_1} \frac{\phi(|\nabla u|)}{|\nabla u|} \nabla u \nabla (u-w-1)_+ \dx \leq C \int_{\Omega_1} \left( \frac{\Upsilon_1(u)}{u} + \frac{\Upsilon_2(v)}{v} + 1 \right) (u-w-1)_+ \dx
\end{equation}
while, testing \eqref{auxprob} with $(u-w-1)_+$, we deduce
\begin{equation}
\label{comparison2}
\int_{\Omega_1} \frac{\phi(|\nabla w|)}{|\nabla w|} \nabla w \nabla (u-w-1)_+ \dx = C \int_{\Omega_1} \left( \frac{\Upsilon_1(u)}{u} + \frac{\Upsilon_2(v)}{v} + 1 \right) (u-w-1)_+ \dx.
\end{equation}
Subtracting \eqref{comparison1}--\eqref{comparison2} term by term we get
\begin{equation*}
\int_{\{u>w+1\}} \left( \frac{\phi(|\nabla u|)}{|\nabla u|} \nabla u - \frac{\phi(|\nabla w|)}{|\nabla w|} \nabla w \right) (\nabla u -\nabla w) \dx \leq 0.
\end{equation*}
The strict monotonicity of the $\Phi$-Laplacian operator (see, e.g., \cite[Lemma 2.6]{CGL}) produces $u \leq w+1$ in $\Omega_1$. In particular,
\begin{equation}
\label{supcomp1}
\|u\|_{L^\infty(\Omega_1)} \leq \|w+1\|_{L^\infty(\Omega_1)} \leq \|w\|_{L^\infty(\Omega_1)}+1.
\end{equation}
Chebichev's inequality entails
\begin{equation*}
|\Omega_1| \leq C \int_{\R^N} \Phi_*(u) \dx,
\end{equation*}
which is uniformly bounded in terms of $L$ by \eqref{embedding} and Lemma \ref{energyest}. We stress the fact that $|\Omega_1|$ does not depend on $u$ but only on $L$, so it is independent on $\eps$. We are in the position to apply Lemma \ref{talenti} to $w$ and deduce
\begin{equation}
\label{supcomp2}
\|w\|_{L^\infty(\Omega_1)} \leq C \left(\|\tilde{f}\|_{L^{\delta_1}(\Omega_1)}^{\frac{1}{i_\Phi-1}} + 1\right).
\end{equation}
Joining \eqref{supcomp1}--\eqref{supcomp2} together and using the definition of $\tilde{f}$ we get
\begin{equation*}
\|u\|_{L^\infty(\Omega_1)} \leq C\left( \left\|\frac{\Upsilon_1(u)}{u}\right\|_{L^{\delta_1}(\Omega_1)}^{\frac{1}{i_\Phi-1}} + \|\hat{f}\|_{L^{\delta_1}(\Omega_1)}^{\frac{1}{i_\Phi-1}} + 1\right),
\end{equation*}
which can be rewritten, using \eqref{factor} and the uniform boundedness of $\hat{f}$ in $L^{\delta_1}(\Omega_1)$ proved in \eqref{hatfunifbound}, as
\begin{equation}
\label{improvement}
\|u\|_{L^\infty(\Omega_1)} \leq C\left( \|u^{s_{\Upsilon_1}-1}\|_{L^{\delta_1}(\Omega_1)}^{\frac{1}{i_\Phi-1}} + 1\right).
\end{equation}
Let us consider $\tau>0$ such that $t^{(s_{\Upsilon_1}-i_\Phi+\tau)\delta_1}<\Phi_*$ (which is possible according to \eqref{powercond1}, taking a smaller $\delta_1$ if necessary). Then \eqref{improvement}, besides observing that $\|u\|_{L^\infty(\Omega_1)}>1$ provided $|\Omega_1|>0$, gives
\begin{equation}
\label{improvement2}
\begin{split}
\|u\|_{L^\infty(\Omega_1)} &\leq C\left( \|u\|_{L^\infty(\Omega_1)}^{\frac{i_\Phi-1-\tau}{i_\Phi-1}} \|u^{s_{\Upsilon_1}-i_\Phi+\tau}\|_{L^{\delta_1}(\Omega_1)}^{\frac{1}{i_\Phi-1}} + 1\right) \\
&\leq C\|u\|_{L^\infty(\Omega_1)}^{1-\frac{\tau}{i_\Phi-1}} \left(\|u\|_{L^{(s_{\Upsilon_1}-i_\Phi+\tau)\delta_1}(\Omega_1)}^{\frac{s_{\Upsilon_1}-i_\Phi+\tau}{i_\Phi-1}} + 1\right) \\
&\leq C\|u\|_{L^\infty(\Omega_1)}^{1-\frac{\tau}{i_\Phi-1}} \left(\|u\|_{L^{\Phi_*}(\Omega_1)}^{\frac{s_{\Upsilon_1}-i_\Phi+\tau}{i_\Phi-1}} + 1\right).
\end{split}
\end{equation}
Accordingly, by the embedding \eqref{embedding} and Lemma \ref{energyest}, estimate \eqref{improvement} can be improved to
\begin{equation*}
\|u\|_{L^\infty(\Omega_1)}^{\frac{\tau}{i_\Phi-1}} \leq C \left(L^{\frac{s_{\Upsilon_1}-i_\Phi+\tau}{i_\Phi-1}} + 1\right).
\end{equation*}
\end{proof}

\begin{lemma}
\label{belowest}
Suppose \hyperlink{H1}{${\rm (H_1)}$}--\hyperlink{H2}{${\rm (H_2)}$}. Then for all $r>0$ there exists $c_r>0$, independent of $\eps\in(0,1)$, such that $\min\{u_\eps,v_\eps\} \geq c_r$ in $B_r$ for all $(u_\eps,v_\eps)$ solution to \eqref{regularprob}.
\end{lemma}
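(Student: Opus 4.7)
Fix $r>0$ and set $U_\eps:=u_\eps+\eps$, $V_\eps:=v_\eps+\eps$, which are strictly positive in $\R^N$ by Theorem \ref{regexistence}. By Lemma \ref{supest} one has $\|U_\eps\|_\infty+\|V_\eps\|_\infty\leq M+2=:M'$ uniformly in $\eps\in(0,1)$. Invoking \ref{harnackcond} evaluated at the cap $M'$ together with the positivity of $h,k$ on $B_{2r}$ granted by \ref{weightscond}, we obtain a constant $C_1>0$ independent of $\eps$ such that
\begin{equation*}
-\Delta_\Phi U_\eps\geq C_1 V_\eps^{\nu_2},\qquad -\Delta_\Psi V_\eps\geq C_1 U_\eps^{\nu_1}\qquad\text{in }B_{2r};
\end{equation*}
hence $U_\eps$ and $V_\eps$ are nonnegative weak supersolutions of Orlicz-Laplacian equations whose sources are driven by the other variable.

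The strategy is to apply the weak Harnack inequality in Orlicz spaces established in \cite[Theorem 1.4]{BHHK} to each supersolution, combined with a comparison principle against the solutions of the auxiliary Dirichlet problems $-\Delta_\Phi w=C_1 V_\eps^{\nu_2}$ and $-\Delta_\Psi z=C_1 U_\eps^{\nu_1}$ on $B_{2r}$ with zero boundary data, whose pointwise behavior on $B_r$ is analyzed via a radial argument in the spirit of Lemma \ref{radialsuper}. This produces inequalities of the shape
\begin{equation*}
\essinf_{B_r} U_\eps\geq C_2\bigl(\essinf_{B_{2r}} V_\eps\bigr)^{\mu_\Phi\nu_2},\qquad \essinf_{B_r} V_\eps\geq C_2\bigl(\essinf_{B_{2r}} U_\eps\bigr)^{\mu_\Psi\nu_1},
\end{equation*}
where the exponents $\mu_\Phi,\mu_\Psi>0$ are dictated by the Orlicz indices through \eqref{youngind}: they are the reciprocals of the two quantities appearing inside the minima of \eqref{exponents}, and in each case one selects the sharper of the two routes (the Orlicz-Young exponent $i_\Phi/s_{\overline\Phi}$ or the Sobolev-Serrin exponent $N(i_\Phi-1)/(N-i_\Phi)$ coming from the weak Harnack), with the analogous choice for $\Psi$.

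Chaining the two inequalities on dyadic balls $B_r\subset B_{2r}\subset\cdots\subset B_{2^k r}$ and using the trivial bound $U_\eps,V_\eps\geq\eps$, one gets $\essinf_{B_r} U_\eps\geq C_3^{(k)}\eps^{\theta^k}$ with $\theta:=\nu_1\nu_2\mu_\Phi\mu_\Psi<1$ by \eqref{exponents} and with multiplicative constants $C_3^{(k)}>0$ whose uniform positivity is ensured by the sub-linearity $\theta<1$ together with the strict positivity of $h,k$ on bounded sets granted by \ref{weightscond}. Taking the supremum over $k\in\N$ and observing that $\eps^{\theta^k}\to 1$ as $k\to+\infty$ (for every fixed $\eps\in(0,1)$) then yields $\essinf_{B_r} U_\eps\geq c_r>0$ uniformly in $\eps$; the symmetric argument furnishes the corresponding bound for $V_\eps$, whence the conclusion after subtracting $\eps$. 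The main obstacle is the precise Orlicz-framework calibration of $\mu_\Phi,\mu_\Psi$: the two alternatives inside each $\min$ of \eqref{exponents} reflect the two distinct estimates available (Orlicz-Young versus Sobolev-Serrin), and choosing the sharper one in each case is exactly what makes \eqref{exponents} equivalent to $\theta<1$ and closes the bootstrap.
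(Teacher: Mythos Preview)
Your proposal has a genuine gap in the dyadic chaining step. The inequalities you obtain have the form $\essinf_{B_r} U_\eps\geq C_2(r)\bigl(\essinf_{B_{2r}} V_\eps\bigr)^{\mu_\Phi\nu_2}$, with the infimum on the right taken over the \emph{larger} ball. Iterating therefore forces the balls to grow to $B_{2^k r}$, and the multiplicative constant at step $j$ involves $\inf_{B_{2^{j+1}r}} h$ and $\inf_{B_{2^{j+1}r}} k$. Under \ref{weightscond} you only know $h(x),k(x)\to 0$ as $|x|\to\infty$, with no quantitative rate (the polynomial decay \ref{decaycond} is part of \hyperlink{H3}{${\rm (H_3)}$}, which is \emph{not} assumed here). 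Hence the assertion that ``the uniform positivity of $C_3^{(k)}$ is ensured by $\theta<1$ together with the strict positivity of $h,k$ on bounded sets'' is unjustified: the balls are unbounded as $k\to\infty$, and $\log C_3^{(k)}$ may well diverge to $-\infty$, in which case taking the supremum over $k$ gives nothing. The appeal to Lemma \ref{radialsuper} is also misplaced, since that lemma concerns exterior domains, not interior balls.

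The paper circumvents this entirely by a different mechanism: it tests the equation with $(u+l)^{-\sigma}\eta^a$ (a Trudinger-type test function) to obtain the \emph{integral} cross-estimate $\int_{B_r} v^{\nu_2}\dx \leq C\int_{B_R} u^{\tau_1}\dx$, and only then applies the weak Harnack inequality of \cite{BHHK} to convert $\int_{B_R} u^{\tau_1}\dx$ back into $(\inf_{B_r} u)^{\tau_1}$. The point is that this produces
\[
\inf_{B_r} u \geq c\,\bigl(\inf_{B_r} v\bigr)^{\nu_2/\tau_1}, \qquad \inf_{B_r} v \geq c\,\bigl(\inf_{B_r} u\bigr)^{\nu_1/\tau_2},
\]
with both infima over the \emph{same} ball $B_r$ (only the fixed ratio $R=2r$ enters the constant). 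Concatenating once gives $\inf_{B_r} u \geq c_r\,(\inf_{B_r} u)^{\nu_1\nu_2/(\tau_1\tau_2)}$ with exponent $<1$ by \eqref{exponents}, which yields the lower bound directly---no iteration over growing balls, no dependence on $\eps$, and no decay rate on $h,k$ needed.
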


\begin{proof}
Fix any $\eps\in(0,1)$. According to \ref{harnackcond} and Lemma \ref{supest}, $(u,v):=(u_\eps,v_\eps)$ satisfies
\begin{equation}
\label{supersol}
\left\{
\begin{alignedat}{2}
-\Delta_{\Phi} u &\geq C^{-1}h(x)v^{\nu_2} &&\quad \mbox{in}\;\; \R^N, \\
-\Delta_{\Psi} v &\geq C^{-1}k(x)u^{\nu_1} &&\quad \mbox{in}\;\; \R^N,
\end{alignedat}
\right.
\end{equation}
for a suitable $C>0$ depending on the parameter $M$ stemming from Lemma \ref{supest}.

Fix $\eta \in C^\infty_c(\R^N)$ such that $\eta \equiv 1$ in $B_r$ and $\eta\equiv 0$ outside $B_R$, with $0<r<R<+\infty$. Set $w_l:=u+l$, for any $l\in(0,1)$. Testing the first equation of \eqref{supersol} with $w_l^{-\sigma}\eta^a$, being $\sigma\in(0,\frac{1}{s_{\overline{\Phi}}-1})$ and $a>i_{\overline{\Phi}}'$, besides noticing that Lemma \ref{supest} and \ref{weightscond} yield respectively $w_l \leq M+1$ and $h\geq c_R$ in $B_R$, we get
\begin{equation}
\label{belowtest1}
\begin{split}
&a \int_{B_R} w_l^{-\sigma}\eta^{a-1} \phi(|\nabla u|) \frac{\nabla u \nabla \eta}{|\nabla u|} \dx -\sigma \int_{B_R} w_l^{-\sigma-1}\eta^a \phi(|\nabla u|)|\nabla u| \dx \\
&\geq C^{-1} \int_{B_R} h w_l^{-\sigma}v^{\nu_2}\eta^a \dx \geq C^{-1} \int_{B_r} v^{\nu_2} \dx.
\end{split}
\end{equation}
Pick any $\mu\in(0,1)$. The fist integral of \eqref{belowtest1} can be estimated via \eqref{indices}, Young's inequality, the convexity of $\overline{\Phi}$, \eqref{factor}, \eqref{youngind}, and \eqref{fundineq2} as
\begin{equation}
\label{reabsorb}
\begin{split}
&\int_{B_R} w_l^{-\sigma}\eta^{a-1} \phi(|\nabla u|) |\nabla \eta| \dx \leq s_\Phi \int_{B_R} w_l^{-\sigma}\eta^{a-1} \frac{\Phi(|\nabla u|)}{|\nabla u|} |\nabla \eta| \dx \\
&= s_\Phi \int_{B_R} \left(\mu \eta^{a i_{\overline{\Phi}}^{-1}} w_l^{-\frac{\sigma+1}{s_{\overline{\Phi}}}} \frac{\Phi(|\nabla u|)}{|\nabla u|}\right) \left(\mu^{-1} \eta^{a-1-a i_{\overline{\Phi}}^{-1}}|\nabla \eta| w_l^{\frac{\sigma+1}{s_{\overline{\Phi}}}-\sigma}\right) \dx \\
&\leq s_\Phi \left[ \int_{B_R} \overline{\Phi}\left(\mu \eta^{a i_{\overline{\Phi}}^{-1}} w_l^{-\frac{\sigma+1}{s_{\overline{\Phi}}}} \frac{\Phi(|\nabla u|)}{|\nabla u|}\right) \dx + \int_{B_R} \Phi\left(\mu^{-1} \eta^{a-1-a i_{\overline{\Phi}}^{-1}}|\nabla \eta| w_l^{\frac{\sigma+1}{s_{\overline{\Phi}}}-\sigma}\right) \dx \right] \\
&\leq C\mu \int_{B_R} \eta^a w_l^{-\sigma-1} \overline{\Phi}\left(\frac{\Phi(|\nabla u|)}{|\nabla u|}\right) \dx + C_{\mu,\eta} \int_{B_R} w_l^{i_\Phi\left(\frac{\sigma+1}{s_{\overline{\Phi}}}-\sigma\right)} \dx \\
&\leq C\mu \int_{B_R} \eta^a w_l^{-\sigma-1} \phi(|\nabla u|)|\nabla u| \dx + C_{\mu,\eta} \int_{B_R} w_l^{i_\Phi\left(\frac{\sigma+1}{s_{\overline{\Phi}}}-\sigma\right)} \dx,
\end{split}
\end{equation}
for a suitable $C>0$ depending on $\Phi$ and $M$. By \eqref{belowtest1}--\eqref{reabsorb} we get
\begin{equation*}
\begin{aligned}
C^{-1} \int_{B_r} v^{\nu_2} \dx &\leq \left(Ca\mu-\sigma\right)\int_{B_R} w_l^{-\sigma-1}\eta^a \phi(|\nabla u|)|\nabla u| \dx + C_{\mu,\eta} \int_{B_R} w_l^{i_\Phi\left(\frac{\sigma+1}{s_{\overline{\Phi}}}-\sigma\right)} \dx.
\end{aligned}
\end{equation*}
Choosing $\mu<\frac{\sigma}{Ca}$ yields
\begin{equation*}
\int_{B_r} v^{\nu_2} \dx \leq C \int_{B_R} w_l^{i_\Phi\left(\frac{\sigma+1}{s_{\overline{\Phi}}}-\sigma\right)} \dx,
\end{equation*}
where $C$ depends on $M,\mu,\eta,\Phi,a,\sigma$. Reasoning as in \eqref{belowtest1}--\eqref{reabsorb}, besides letting $l\to 0^+$, we obtain
\begin{equation}
\label{intest3}
\int_{B_r} v^{\nu_2} \dx \leq C \int_{B_R} u^{i_\Phi\left(\frac{\sigma+1}{s_{\overline{\Phi}}}-\sigma\right)} \dx
\end{equation}
and
\begin{equation}
\label{intest4}
\int_{B_r} u^{\nu_1} \dx \leq C \int_{B_R} v^{i_\Psi\left(\frac{\sigma+1}{s_{\overline{\Psi}}}-\sigma\right)} \dx.
\end{equation}
Let us set
\begin{equation}
\label{hatsigmadef}
\tau_1:=\min\left\{\frac{i_\Phi}{s_{\overline{\Phi}}},\frac{N(i_\Phi-1)}{N-i_\Phi}\right\}-\hat{\sigma} \quad \mbox{and} \quad \tau_2:=\min\left\{\frac{i_\Psi}{s_{\overline{\Psi}}},\frac{N(i_\Psi-1)}{N-i_\Psi}\right\}-\hat{\sigma},
\end{equation}
where $\hat{\sigma}>0$ is chosen such that $\tau_i>0$, $i=1,2$, and
\begin{equation}
\label{sigmadef}
\nu_1\nu_2<\tau_1\tau_2,
\end{equation}
which is possible according to \ref{harnackcond}. Then fix $\sigma$ as above fulfilling
\begin{equation*}
i_\Phi\left(\frac{\sigma+1}{s_{\overline{\Phi}}}-\sigma\right)>\frac{i_\Phi}{s_{\overline{\Phi}}}-\hat{\sigma} \quad \mbox{and} \quad i_\Psi\left(\frac{\sigma+1}{s_{\overline{\Psi}}}-\sigma\right)>\frac{i_\Psi}{s_{\overline{\Psi}}}-\hat{\sigma}.
\end{equation*}
Exploiting Lemma \ref{supest} again and \eqref{hatsigmadef}--\eqref{sigmadef}, estimates \eqref{intest3}--\eqref{intest4} yield
\begin{equation}
\label{intest5}
\int_{B_r} v^{\nu_2} \dx \leq C M^{i_\Phi\left(\frac{\sigma+1}{s_{\overline{\Phi}}}-\sigma\right)-\tau_1} \int_{B_R} u^{\tau_1} \dx \leq C \int_{B_R} u^{\tau_1} \dx
\end{equation}
and
\begin{equation}
\label{intest6}
\int_{B_r} u^{\nu_1} \dx \leq C M^{i_\Psi\left(\frac{\sigma+1}{s_{\overline{\Psi}}}-\sigma\right)-\tau_2} \int_{B_R} v^{\tau_2} \dx \leq C \int_{B_R} v^{\tau_2} \dx.
\end{equation}
Using the weak Harnack inequality \cite[Theorem 1.4 case 1]{BHHK} (with $s=\infty$ and $l_0=\tau_1$) and \eqref{intest5}, besides recalling \eqref{hatsigmadef} (which ensures that $\tau_1$ is strictly less than $l(i_\Phi)$ defined in \cite[p.792]{BHHK}) and \cite[Remark 1.6]{BHHK}, we get
\begin{equation}
\label{belowest1}
\begin{split}
\inf_{B_r} u &\geq cR^{-\frac{N}{\tau_1}} \left(\int_{B_R} u^{\tau_1} \dx\right)^{\frac{1}{\tau_1}} \geq cR^{-\frac{N}{\tau_1}} \left(\int_{B_r} v^{\nu_2} \dx\right)^{\frac{1}{\tau_1}} \geq c\left(\frac{r}{R}\right)^{\frac{N}{\tau_1}} \left(\inf_{B_r} v\right)^{\frac{\nu_2}{\tau_1}}.
\end{split}
\end{equation}
Repeating the same argument, from \eqref{intest6} we obtain
\begin{equation}
\label{belowest2}
\inf_{B_r} v \geq c\left(\frac{r}{R}\right)^{\frac{N}{\tau_2}} \left(\inf_{B_r} u\right)^{\frac{\nu_1}{\tau_2}}.
\end{equation}
Let $R=2r$. Concatenating \eqref{belowest1}--\eqref{belowest2} and bearing in mind that $c$ depends on $\eta$, and a fortiori on $r$, yield
\begin{equation*}
\inf_{B_r} u \geq c_r \left(\inf_{B_r} u\right)^{\frac{\nu_1\nu_2}{\tau_1\tau_2}}. 
\end{equation*}
The conclusion then follows by \eqref{sigmadef}.
\end{proof}
\begin{rmk}
\label{nonshift}
An inspection of the proofs of Lemmas \ref{energyest}, \ref{supest}, and \ref{belowest} reveals that the estimates proved there hold true also for $\eps=0$, that is, for the solutions to problem \eqref{prob}.
\end{rmk}

\subsection{Conclusion}

\begin{lemma}
\label{distrsol}
Under \hyperlink{H1}{${\rm (H_1)}$}--\hyperlink{H2}{${\rm (H_2)}$}, there exists a distributional solution $(u,v)\in C^{1,\tau}_{\rm loc}(\R^N)$ to \eqref{prob}, for a suitable $\tau\in(0,1]$.
\end{lemma}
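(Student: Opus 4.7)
The plan is to pass to the limit $\eps \to 0^+$ in the family of solutions $(u_\eps,v_\eps)$ to the regularized problem \eqref{regularprob} furnished by Theorem \ref{regexistence}. Three ingredients are already in place: the uniform energy bound (Lemma \ref{energyest}), the uniform $L^\infty$ bound (Lemma \ref{supest}), and the uniform strictly-positive lower bound on every ball (Lemma \ref{belowest}). The strategy proceeds in three stages: weak compactness in $X$, uniform interior $C^{1,\tau}$ regularity, then passage to the limit in the distributional identity.

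First I would use reflexivity of $X$ — guaranteed by \ref{ellipticity}, since $\Phi,\Psi\in\Delta_2\cap\nabla_2$ by \eqref{indices} — together with Lemma \ref{energyest} to extract a subsequence, still denoted $(u_\eps,v_\eps)$, such that $(u_\eps,v_\eps)\rightharpoonup(u,v)$ in $X$. Proposition \ref{compemb} yields strong convergence in the weighted Orlicz spaces $L^\Phi(\R^N;h)\times L^\Psi(\R^N;k)$, and a further diagonal extraction makes $(u_\eps,v_\eps)\to(u,v)$ a.e.~in $\R^N$. Passing Lemma \ref{belowest} and Lemma \ref{supest} to the pointwise limit gives $c_r\le u,v\le M$ on each $B_r$, in particular $u,v>0$ everywhere.

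The key regularity step is to fix any ball $B_R$ and observe that on $\overline{B}_R$
\begin{equation*}
c_R \le u_\eps+\eps,\; v_\eps+\eps \le M+1,
\end{equation*}
so that by \ref{growthcond}, \ref{weightscond} and Lemma \ref{belowest} the right-hand sides $h(x)f(u_\eps+\eps,v_\eps+\eps)$ and $k(x)g(u_\eps+\eps,v_\eps+\eps)$ are uniformly bounded in $L^\infty(B_R)$ (the singular factors $(u_\eps+\eps)^{-\alpha}$ and $(v_\eps+\eps)^{-\beta}$ being tamed precisely by the lower bound $c_R$). Interior $C^{1,\tau}$ regularity for $\Phi$-Laplacian equations with bounded right-hand sides (Lieberman-type results extended to the Orlicz setting) then provides a uniform bound on $(u_\eps,v_\eps)$ in $C^{1,\tau}(\overline{B}_{R/2})$, for some $\tau=\tau(N,\Phi,\Psi,R)\in(0,1]$ independent of $\eps$. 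An Arzelà–Ascoli argument, exhausting $\R^N$ by concentric balls and diagonalizing, yields a further subsequence converging in $C^{1,\tau'}_{\rm loc}(\R^N)$ for every $\tau'<\tau$; the limit $(u,v)$ belongs to $C^{1,\tau}_{\rm loc}(\R^N)$.

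It remains to test the equation with $\xi\in C^\infty_c(\R^N)$ and pass to the limit in
\begin{equation*}
\int_{\R^N}\frac{\phi(|\nabla u_\eps|)}{|\nabla u_\eps|}\nabla u_\eps\cdot\nabla\xi\dx = \int_{\R^N} h(x)\,f(u_\eps+\eps,v_\eps+\eps)\,\xi\dx,
\end{equation*}
together with the analogous identity for $v_\eps$. Uniform convergence of $(u_\eps,\nabla u_\eps)$ on the compact set $\mathrm{supp}\,\xi$ handles the left-hand side by continuity of $t\mapsto \phi(t)/t$ together with the fact that the set $\{\nabla u=0\}\cap\mathrm{supp}\,\xi$ contributes nothing; the right-hand side converges by dominated convergence, since $u_\eps+\eps\ge c_R$ bounds the singular reaction uniformly on $\mathrm{supp}\,\xi$. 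The main obstacle is precisely securing the pointwise (hence strong-gradient) convergence needed to handle the non-linear principal part: weak convergence in $X$ alone would not suffice, and this is why Lemma \ref{belowest} is decisive — without it, the right-hand side would fail to be locally bounded, and the $C^{1,\tau}$ interior estimates could not be invoked.
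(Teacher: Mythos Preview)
Your proposal is correct and follows essentially the same strategy as the paper: uniform local $L^\infty$ bounds on the right-hand sides (via Lemmas \ref{supest} and \ref{belowest}) feed into Lieberman-type interior $C^{1,\tau}$ estimates, then Ascoli--Arzel\`a and a diagonal argument produce a $C^{1,\tau}_{\rm loc}$ limit in which one passes to the distributional identity. The paper is slightly more economical in that it does not invoke weak compactness in $X$ or Proposition \ref{compemb} at this stage---the $C^{1,\tau}_{\rm loc}$ convergence alone already gives the pointwise and gradient convergence needed on compact sets---but your additional steps are harmless and serve to identify the limit as an element of $X$.
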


\begin{proof}
Let us set, for all $n\in\N$, $(u_n,v_n):=(u_{\eps_n},v_{\eps_n})$, where $\eps_n \to 0^+$ and $(u_{\eps_n},v_{\eps_n})$ comes from Theorem \ref{regexistence}. We also set $\hat{f}_n(x):=h(x)f(x,u_n+\eps_n,v_n+\eps_n)$ and $\hat{g}_n(x):=k(x)g(x,u_n+\eps_n,v_n+\eps_n)$. By Lemma \ref{energyest} we infer that $\{(u_n,v_n)\}$ is bounded in $X$, while Lemmas \ref{supest} and \ref{belowest} ensure that $\{\hat{f}_n\}$ and $\{\hat{g}_n\}$ are bounded in $L^\infty_{\rm loc}(\R^N)$: indeed, for any $r>0$,
\begin{equation*}
\hat{f}_n(x) \leq h(x)[(c_r^{-\alpha}+1)((M+1)^{s_{\Upsilon_2}-1}+1)+(M+1)^{s_{\Upsilon_1}-1}] \in L^\infty(B_r),
\end{equation*}
and the same holds for $\hat{g}_n$. Hence Lieberman's regularity theory guarantees that $\{(u_n,v_n)\}$ is bounded in $C^{1,\tau_0}_{\rm loc}(\R^N)^2$ for a suitable $\tau_0\in(0,1]$. Then, fixed any $\tau\in(0,\tau_0)$, Ascoli-Arzelà's theorem furnishes $(u,v)\in C^{1,\tau}_{\rm loc}(\R^N)^2$ such that $(u_n,v_n) \to (u,v)$ in $C^{1,\tau}_{\rm loc}(\R^N)^2$. Passing to the limit in the distributional formulation of \eqref{regularprob}, via uniform convergence and dominated convergence on the left-hand and right-hand sides respectively, reveals that $(u,v)$ is a distributional solution to \eqref{prob}.
\end{proof}

\begin{thm}
\label{weaksol}
Suppose \hyperlink{H1}{${\rm (H_1)}$}--\hyperlink{H2}{${\rm (H_2)}$}. Then there exists a weak solution $(u,v)\in C^{1,\tau}_{\rm loc}(\R^N)$ to \eqref{prob}, for a suitable $\tau\in(0,1]$.
\end{thm}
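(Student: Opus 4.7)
The starting point is the distributional solution $(u,v)\in C^{1,\tau}_{\rm loc}(\R^N)^2$ furnished by Lemma \ref{distrsol}. By Remark \ref{nonshift}, the a priori estimates of Lemmas \ref{energyest}, \ref{supest}, and \ref{belowest} apply directly to $(u,v)$, so $(u,v)\in X$, $\|u\|_\infty+\|v\|_\infty\leq M$, and $\min\{u,v\}\geq c_r$ on $B_r$ for every $r>0$. The goal is to upgrade the admissible class of test functions from $C^\infty_c(\R^N)$ to $\mathcal{D}^{1,\Phi}_0(\R^N)$ (and analogously for the second equation) by a regularization-localization procedure.

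Fix $(\xi,\nu)\in X$ and let $\eta_R\in C^\infty_c(\R^N)$ be a standard cutoff, equal to $1$ on $B_R$ and supported in $B_{2R}$, together with a mollifier $\rho_n$. Define $\xi_{R,n}:=(\xi\,\eta_R)*\rho_n\in C^\infty_c(\R^N)$. Letting first $n\to\infty$ and then $R\to\infty$, one has $\xi_{R,n}\to\xi$ in $\mathcal{D}^{1,\Phi}_0(\R^N)$ and, by \eqref{embedding}, in $L^{\Phi_*}(\R^N)$. Plugging $\xi_{R,n}$ into the distributional formulation from Lemma \ref{distrsol} and passing to the limit, the left-hand side is controlled by Hölder's inequality in Orlicz spaces, exploiting that $\phi(|\nabla u|)\in L^{\overline\Phi}(\R^N)$ (a consequence of \eqref{fundineq2}), to obtain convergence to $\int \frac{\phi(|\nabla u|)}{|\nabla u|}\nabla u\cdot\nabla\xi\,dx$.

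The main obstacle is the right-hand side, because the growth condition \ref{growthcond} entails a singular contribution $h\,u^{-\alpha}\,\xi$ which cannot be dominated by a simple global bound, only local lower bounds on $u$ being available. I plan to split $\R^N = B_R\cup B_R^e$: on $B_R$, the local estimate from Lemma \ref{belowest} together with the sup bound of Lemma \ref{supest} makes $f(u,v)$ bounded, so Lebesgue's theorem and Proposition \ref{interpolation} (giving $h\in L^{\overline{\Phi_*}}$) combined with $\xi_{R,n}\to\xi$ in $L^{\Phi_*}(B_R)$ yield convergence of the inner integral. On $B_R^e$, I would exploit the smallness of $h$ (from \ref{weightscond}) together with the global $L^\infty$ bound on $u,v$ (Lemma \ref{supest}), and use the finiteness of $\int h\,f(u,v)\,u\,dx$ arising from testing with $u$ itself (as in Lemma \ref{energyest}): Young's inequality applied to the product $h\,u^{-\alpha}\,|\xi|$ splits it into an integrable piece (involving $h\,u^{1-\alpha}$, already controlled by the energy estimate) and a non-singular tail handled by the Orlicz Hölder pairing $L^{\overline{\Phi_*}}\times L^{\Phi_*}$; both vanish uniformly in $n$ as $R\to\infty$ by the decay of $h$. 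The analogous treatment of the second equation with $\nu_{R,n}$ then yields the weak formulation for every $(\xi,\nu)\in X$, concluding that $(u,v)$ is a weak solution.
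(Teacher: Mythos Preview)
Your approach contains a genuine gap in the treatment of the right-hand side as $R\to\infty$. The claim that ``Young's inequality applied to the product $h\,u^{-\alpha}\,|\xi|$ splits it into an integrable piece (involving $h\,u^{1-\alpha}$\ldots)'' does not hold: there is no Young-type inequality that transforms the singular factor $u^{-\alpha}$ into the bounded one $u^{1-\alpha}$. Any standard splitting $ab\leq \tfrac{a^p}{p}+\tfrac{b^{p'}}{p'}$ applied to $u^{-\alpha}\,|\xi|$ only produces a \emph{more} singular power of $u$. Moreover, the finiteness of $\int h f(u,v)\,u\,dx$ cannot be invoked here: you only know $(u,v)$ as a \emph{distributional} solution, so testing with $u$ is not yet legitimate (this would be circular), and in any case $f(u,v)\lesssim u^{-\alpha}+\cdots$ is an upper bound, so $\int h f(u,v)u<\infty$ would not yield $\int h u^{1-\alpha}<\infty$ either. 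Without hypothesis \ref{decaycond} (not assumed in this theorem), there is no quantitative lower bound on $u$ at infinity, so $h\,u^{-\alpha}\,|\xi|\in L^1(\R^N)$ cannot be established directly, and your dominated-convergence scheme for the tail collapses.

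The paper circumvents this entirely by splitting the test function as $\eta=\eta_+-\eta_-$ and treating each sign separately. With $\eta_n:=\theta_n\eta_+$ one has $\eta_n\nearrow\eta_+$ pointwise and $h f(u,v)\geq 0$, so Beppo Levi's monotone convergence theorem gives $\int h f(u,v)\eta_n\to\int h f(u,v)\eta_+$ without any a priori integrability of $h f(u,v)\eta_+$; finiteness of the limit then follows \emph{a posteriori} from the convergence of the gradient side. This sign-splitting step is the missing idea in your argument. The convergence $\eta_n\to\eta_+$ in $\mathcal{D}^{1,\Phi}_0(\R^N)$ on the left-hand side also requires care (the paper uses Proposition \ref{holderineq} to control $\|\eta_+\nabla\theta_n\|_{L^\Phi}$ via $\|\eta_+\|_{L^{\Phi_*}(A_n)}\|\nabla\theta_n\|_{L^N}$, with the latter bounded by scaling), which you gloss over but is secondary to the main gap.
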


\begin{proof}
This proof is patterned after \cite[Lemma 4.2]{GMM}, so here we only sketch it. Lemma \ref{distrsol} furnishes a distributional solution $(u,v)\in C^{1,\tau}_{\rm loc}(\R^N)$ to \eqref{prob}. Now we show that it is actually a weak solution to the problem. To this end, we reason for the first equation and pick any test function $\eta\in \mathcal{D}^{1,\Phi}_0(\R^N)$ and a cut-off function $\theta\in C^\infty(\R_+)$ such that
\begin{equation*}
\theta(t) := \left\{
\begin{alignedat}{1}
1, \quad &\mbox{in} \;\; [0,1], \\
\mbox{decreasing}, \quad &\mbox{in} \;\; ]1,2[, \\
0, \quad &\mbox{in} \;\; [2,+\infty).
\end{alignedat}
\right.
\end{equation*}
We split the test function as $\eta = \eta_+-\eta_-$ and set, for every $n,m\in\N$,
\begin{equation*}
\begin{split}
\theta_n := \theta\left(\frac{|\cdot|}{n}\right) \in C^\infty_c(\R^N), \quad \eta_n := \theta_n \eta_+ \in \mathcal{D}^{1,\Phi}_0(\R^N), \quad \eta_{m,n} := \rho_m * \eta_n \in C^\infty_c(\R^N).
\end{split}
\end{equation*}
We notice that $\eta_n \nearrow \eta_+$ and $\eta_{m,n} \to \eta_n$ in $\mathcal{D}^{1,\Phi}_0(\R^N)$ as $m\to\infty$ (see, e.g., \cite[Theorem 3.18.1.1]{KJF}), which readily entails
\begin{equation*}
\lim_{m\to\infty} \int_{\R^N} \phi(|\nabla u|)\frac{\nabla u \nabla \eta_{m,n}}{|\nabla u|} \dx = \int_{\R^N} \phi(|\nabla u|)\frac{\nabla u \nabla \eta_n}{|\nabla u|} \dx.
\end{equation*}
Reasoning as in \cite[Lemma 4.2]{GMM}, besides exploiting Lemmas \ref{supest} and \ref{belowest}, ensures that
\begin{equation*}
\lim_{m\to\infty} \int_{\R^N} hf(u,v)\eta_{m,n} \dx = \int_{\R^N} hf(u,v)\eta_n \dx.
\end{equation*}
Accordingly, we can pass to the limit in
\begin{equation*}
\int_{\R^N} \phi(|\nabla u|)\frac{\nabla u \nabla \eta_{m,n}}{|\nabla u|} \dx = \int_{\R^N} hf(u,v)\eta_{m,n} \dx,
\end{equation*}
that holds true for any $m,n\in\N$, since $(u,v)$ is a distributional solution to \eqref{prob}; we obtain
\begin{equation*}
\int_{\R^N} \phi(|\nabla u|)\frac{\nabla u \nabla \eta_n}{|\nabla u|} \dx = \int_{\R^N} hf(u,v)\eta_n \dx
\end{equation*}
for all $n\in\N$. Moreover, Beppo Levi's monotone convergence theorem ensures
\begin{equation*}
\lim_{n\to\infty} \int_{\R^N} hf(u,v)\eta_n \dx = \int_{\R^N} hf(u,v)\eta_+ \dx.
\end{equation*}

In order to prove
\begin{equation*}
\lim_{n\to\infty} \int_{\R^N} \phi(|\nabla u|)\frac{\nabla u \nabla \eta_n}{|\nabla u|} \dx = \int_{\R^N} \phi(|\nabla u|)\frac{\nabla u \nabla \eta_+}{|\nabla u|} \dx,
\end{equation*}
it suffices to show that $\eta_n \to \eta_+$ in $\mathcal{D}^{1,\Phi}_0(\R^N)$, that is, $\int_{\R^N} \Phi(|\nabla \eta_n - \nabla \eta_+|) \dx$ goes to zero as $n\to\infty$. Noticing that $\supp \theta_n \subseteq \{x\in\R^N: \, |x|<2n\} =: K_n$ and setting $A_n:=K_{2n}\setminus K_n$, we get
\begin{equation}
\label{scaling2}
\begin{split}
\int_{\R^N} \Phi(|\nabla \eta_n - \nabla \eta_+|) \dx &= \int_{\R^N} \Phi(|\eta_+ \nabla \theta_n + \theta_n \nabla \eta_+ - \nabla \eta_+|) \dx \\
&\leq C \left( \int_{\R^N} \Phi((1-\theta_n)|\nabla \eta_+|) \dx + \int_{A_n} \Phi(\eta_+ |\nabla \theta_n|) \dx \right).
\end{split}
\end{equation}
We treat the integrals on the right-hand side of \eqref{scaling2} separately. For the first integral, by Lebesgue's theorem we infer
\begin{equation*}
\lim_{n\to\infty} \int_{\R^N} \Phi((1-\theta_n)|\nabla \eta_+|) \dx = 0,
\end{equation*}
since $\eta_+ \in \mathcal{D}^{1,\Phi}_0(\R^N)$ and $\theta_n \to 1$ in $\R^N$. Regarding the second integral, we show that $\|\eta_+|\nabla \theta_n|\|_{L^\Phi(A_n)} \to 0$ as $n\to\infty$. By Proposition \ref{holderineq} we have
\begin{equation*}
\|\eta_+|\nabla \theta_n|\|_{L^\Phi(A_n)} \leq \|\eta_+\|_{L^{\Phi_*}(A_n)} \|\nabla \theta_n\|_{L^N(\R^N)}.
\end{equation*}
We notice that $\|\eta_+\|_{L^{\Phi_*}(A_n)} \to 0$ as $n\to\infty$, since $\eta_+\in L^{\Phi_*}(\R^N)$ by \eqref{embedding}. Moreover, $\|\nabla \theta_n\|_{L^N(\R^N)}$ is bounded uniformly in $n$: indeed, through a change of variable, we get
\begin{equation*}
\|\nabla \theta_n\|_{L^N(\R^N)}^N = \int_{\R^N} |\nabla \theta_n|^N \dx = \frac{1}{n^N} \int_{\R^N} \left|\theta'\left(\frac{|x|}{n}\right)\right|^N \dx = \int_{\R^N} |\theta'(|x|)|^N \dx <+\infty.
\end{equation*}
\end{proof}

\section{Uniqueness result}
\begin{lemma}
\label{decay}
Suppose \hyperlink{H1}{${\rm (H_1)}$}--\hyperlink{H2}{${\rm (H_2)}$} and \ref{decaycond} to be satisfied. Let $(u,v)$ be a weak solution to \eqref{prob}. Then
\begin{equation*}
\begin{split}
&C^{-1} \int_{|x|}^{+\infty} \phi^{-1}(r^{1-N})\dr\le u(x)\le C\int_{|x|}^{+\infty} \phi^{-1}(r^{1-N})\dr,\\
&C^{-1} \int_{|x|}^{+\infty} \psi^{-1}(r^{1-N})\dr\le v(x)\le C\int_{|x|}^{+\infty} \psi^{-1}(r^{1-N})\dr,
\end{split}
\end{equation*}
for a suitable $C>0$.
\end{lemma}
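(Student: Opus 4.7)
The strategy is to prove the two inequalities by comparing $u$ with the radial sub- and super-solutions built in Lemmas~\ref{radialsub}--\ref{radialsuper}; the twin bounds on $v$ follow by repeating the same scheme with $\Psi, \Gamma_i, k, \beta, \theta_2$ replacing $\Phi, \Upsilon_i, h, \alpha, \theta_1$. The uniform a priori estimates of Lemmas~\ref{supest} and~\ref{belowest}, still valid for the weak solution $(u,v)$ by Remark~\ref{nonshift}, furnish both the global bound $\|u\|_\infty, \|v\|_\infty \leq M$ and the local positivity $u, v \geq c_r$ on $\overline{B}_r$ that will serve as boundary data in the comparison.

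For the lower bound I would pick any $r>0$ with $c_r>0$ provided by Lemma~\ref{belowest}. Lemma~\ref{radialsub} then yields $\underline{w} \in C^1_{\mathrm{loc}}(\R^N \setminus \{0\})$ with $\underline{w}_{\mid B_r^e} \in \mathcal{D}^{1,\Phi}_0(B_r^e)$, $\underline{w} = c_r$ on $\partial B_r$, $\underline{w} \to 0$ at infinity, and $-\Delta_\Phi \underline{w} = 0$ in $B_r^e$. Since $-\Delta_\Phi u = hf(u,v) \geq 0$ and $u \geq \underline{w}$ on $\partial B_r$, the function $(\underline{w}-u)_+ \in \mathcal{D}^{1,\Phi}_0(B_r^e)$ is an admissible test function in both equations; subtracting and invoking the strict monotonicity of $-\Delta_\Phi$ (see \cite[Lemma 2.6]{CGL}) forces $\underline{w} \leq u$ in $B_r^e$. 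Since $\int_{|x|}^{+\infty} \phi^{-1}(\tau^{1-N})\dtau$ is bounded on $\overline{B}_r$ and $u \geq c_r$ there, combining with \eqref{radialsubrepr} yields the desired inequality on all of $\R^N$ after adjusting $C$.

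For the upper bound I would feed the lower bound just established, together with $\|u\|_\infty, \|v\|_\infty \leq M$ and \ref{growthcond}, into the estimate
\begin{equation*}
h(x) f(u,v) \leq C_M\, h(x)\bigl(u(x)^{-\alpha} + 1\bigr) \leq C|x|^{-\theta_1 + \alpha(N-i_\Phi)/(i_\Phi-1)} + C|x|^{-\theta_1},
\end{equation*}
valid for $|x|$ large, where the second passage combines \ref{decaycond} with the asymptotics of the lower bound provided by Remark~\ref{decayest}. I then choose $l > N$ and $\mu>0$ so that the right-hand side above is dominated by $\mu|x|^{-l}$ on some exterior ball $B_R^e$, and $c$ large enough to guarantee both \eqref{boundarycond} and $c \geq \|u\|_{L^\infty(\partial B_R)}$; Lemma~\ref{radialsuper} then produces a super-solution $\overline{w}$ to $-\Delta_\Phi \overline{w} = \mu|x|^{-l}$ vanishing at infinity. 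A weak comparison with $(u-\overline{w})_+$ as test function, admissible by the same argument as before, yields $u \leq \overline{w}$ in $B_R^e$. The representation \eqref{radialsuperrepr}, together with the fact that the $r^{1-N}$ term dominates the $r^{1-l}$ one at infinity for $l > N$, closes the upper estimate on $B_R^e$; on $B_R$ the bound is trivial by $u \leq M$ and the boundedness of $\int_{|x|}^{+\infty} \phi^{-1}(\tau^{1-N})\dtau$ away from zero.

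The main obstacle is to ensure that $l$ can actually be taken strictly greater than $N$ in the super-solution step, which is the quantitative content behind the choice of exponents in \ref{decaycond} combined with the sharp asymptotics of the lower bound. When the resulting exponent $\theta_1 - \alpha(N-i_\Phi)/(i_\Phi-1)$ exceeds $N$ directly, the comparison closes in one shot; otherwise, a short bootstrap is needed, iteratively reinserting the upper bound obtained with a coarser $l$ into the singular term $u^{-\alpha}$ and reducing the effective decay exponent of $hf(u,v)$ until $l > N$ is attained. A secondary technical point is verifying that $(\underline{w}-u)_+$ and $(u-\overline{w})_+$ are indeed admissible test functions in the exterior Beppo~Levi-Orlicz space, which follows from the $\mathcal{D}^{1,\Phi}_0(B_r^e)$-regularity of the radial solutions together with the common vanishing of $u,\underline{w},\overline{w}$ at infinity.
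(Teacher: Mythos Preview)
Your overall scheme matches the paper's proof: bound $u$ from below on $B_r^e$ by comparing with the $\Phi$-harmonic radial function $\underline{w}$ of Lemma~\ref{radialsub} (the paper takes $r=1$), use the resulting lower asymptotics from Remark~\ref{decayest} together with \ref{growthcond}, \ref{decaycond}, and Lemma~\ref{supest} to obtain $h(x)f(u,v)\leq C|x|^{-l_1}$ with $l_1=\theta_1-\alpha\frac{N-i_\Phi}{i_\Phi-1}$, and then compare $u$ from above with the radial super-solution $\overline{w}$ of Lemma~\ref{radialsuper}. The paper invokes the weak comparison principle of \cite[Theorem~3.4.1]{PS} rather than spelling out the test-function monotonicity argument, but the mechanism is identical.

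The bootstrap you outline for the contingency $l_1\le N$, however, cannot work as described. The singular factor $u^{-\alpha}$ is controlled by \emph{lower} bounds on $u$, so feeding an improved \emph{upper} bound back into $u^{-\alpha}$ moves the estimate in the wrong direction: an upper bound $u\le \overline{w}$ yields only $u^{-\alpha}\ge \overline{w}^{-\alpha}$, which is useless for dominating $hf(u,v)$. The lower bound $u(x)\ge C^{-1}|x|^{(i_\Phi-N)/(i_\Phi-1)}$ obtained from the $\Phi$-harmonic comparison is already the sharp rate at that scale, so no iteration can push the decay exponent of $hf(u,v)$ beyond $l_1$ by this route. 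The paper simply applies Lemma~\ref{radialsuper} with $l=l_1$ in a single step, without any iteration.
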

\begin{proof}
Lemma \ref{belowest} (see also Remark \ref{nonshift}) ensures that 
\begin{equation}\label{localbound}
\inf_{B_1}u= c
\end{equation}
for a suitable $c>0$. Lemma \ref{radialsub} furnishes $\underline{w}\in \mathcal{D}^{1,\Phi}_0(B_1^e)$ such that  
\begin{equation*}
\left\{
\begin{alignedat}{2}
-\Delta_{\Phi} \underline{w} &= 0 &&\quad \mbox{in}\;\; B_1^{e}, \\
\underline{w} &= c &&\quad \mbox{on}\;\; \partial B_1, \\
\underline{w}(x) &\to 0 &&\quad \mbox{as}\;\; |x| \to +\infty.
\end{alignedat}
\right.
\end{equation*}
The weak comparison principle (see, e.g., \cite[Theorem 3.4.1]{PS}) and \eqref{localbound} ensure $\underline{w}\le u$ in $B_1^{e}$. In particular, by \eqref{radialsubrepr} we have
\begin{equation*}
C^{-1} \int_{|x|}^{+\infty} \phi^{-1}(r^{1-N}) \dr \le \underline{w}(x)\le u(x) \quad \forall x\in B_1^e.
\end{equation*}
The same argument produces a lower bound for $v$ in $B_1^e$.

According to Lemma \ref{supest} (besides Remark \ref{nonshift}), we have $\|u\|_\infty+\|v\|_\infty\leq M$ for some $M>0$. Set $l_1:=\theta_1-\alpha\frac{N-i_\Phi}{i_\Phi-1}>N$. Then, using \ref{decaycond}, \ref{growthcond}, and Remark \ref{decayest} as in Lemma \ref{distrsol}, we obtain
\begin{equation*} 
\begin{split}
h(x)f(u(x),v(x)) &\leq C|x|^{-\theta_1}\left[(u(x)^{-\alpha}+1)(v(x)^{s_{\Upsilon_2}-1}+1)+u(x)^{s_{\Upsilon_1}-1}+v(x)^{s_{\Upsilon_2}-1}\right] \\ 
&\leq C|x|^{-l_1-\alpha\frac{N-i_\Phi}{i_\Phi-1}}\left(|x|^{\alpha\frac{N-i_\Phi}{i_\Phi-1}}+1 \right) \leq C|x|^{-l_1},
\end{split}
\end{equation*}
for all $x\in B_1^e$, being $C=C(M,c)$. Lemma \ref{radialsuper} produces a solution $\overline{w}$ to
\begin{equation*}
\left\{
\begin{alignedat}{2}
-\Delta_{\Phi} \overline{w} &= C|x|^{-l_1} &&\quad \mbox{in}\;\; B_1^{e}, \\
\overline{w} &= \hat{M} &&\quad \mbox{on}\;\; \partial B_1, \\
\overline{w}(x) &\to 0 &&\quad \mbox{as}\;\; |x| \to +\infty,
\end{alignedat}
\right.
\end{equation*}
choosing $\hat{M}>M$ sufficiently large, as prescribed in \eqref{boundarycond}. The weak comparison principle ensures that
\begin{equation*}
u(x)\le \overline{w}(x) \le C\int_{|x|}^{+\infty} \phi^{-1}(r^{1-N}) \dr \quad \forall x\in B_1^e.
\end{equation*}
Arguing in the same way, we deduce also an upper bound for $v$ in $B_1^e$.
\end{proof}

\begin{lemma}
\label{convexityofJ}
Let $J:\mathcal{D}^{1,\Theta}_{0}(\R^N)\to [0,+\infty)$ be defined as
\begin{equation*}
J(w)=\int_{\R^N}\Theta(|\nabla w|) \dx, 
\end{equation*} 
where $\Theta$ is a Young function of class $C^2$ and $\theta=\Theta'$. For any fixed $k\in (1,i_\theta+1]$, we consider $\tilde{J}:L^1(\R^N)\to [0,+\infty]$ given by
\begin{equation*}
\tilde{J}(w):=\begin{cases}
J\Big(w^{\frac{1}{k}}\Big), & \text{if $w\ge 0$ \;and\; $w^{\frac{1}{k}}\in \mathcal{D}^{1,\Theta}_{0}(\R^N)$}, \\
+\infty, & \text{otherwise.}
\end{cases}
\end{equation*}
Thus $\dom \tilde{J}\not = \emptyset$ and $\tilde{J}$ is convex. 
\end{lemma}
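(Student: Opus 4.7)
The plan is to verify non-emptiness of $\dom\tilde{J}$ by exhibiting an explicit element, then prove convexity by establishing a pointwise gradient inequality that combines a H\"older-type bound with the hidden convexity of $\Psi(t):=\Theta(t^{1/k})$, the latter being exactly where the assumption $k\le i_\theta+1$ enters.

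For the first part, I would fix any nonnegative $u\in C^\infty_c(\R^N)$ and set $w:=u^k$. Since $u$ is bounded and compactly supported, $w\in L^1(\R^N)$; moreover $w\ge 0$ and $w^{1/k}=u\in C^\infty_c(\R^N)\subseteq\mathcal{D}^{1,\Theta}_0(\R^N)$, so $w\in\dom\tilde{J}$.

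For convexity, pick $w_1,w_2\in L^1(\R^N)$ and $\lambda\in[0,1]$; if either $w_i\notin\dom\tilde{J}$ the claim is vacuous, so I may assume $u_i:=w_i^{1/k}\in\mathcal{D}^{1,\Theta}_0(\R^N)$, set $w:=\lambda w_1+(1-\lambda)w_2$, $u:=w^{1/k}$, and aim at the pointwise bound
\begin{equation*}
\Theta(|\nabla u|)\;\le\;\lambda\,\Theta(|\nabla u_1|)+(1-\lambda)\,\Theta(|\nabla u_2|). \tag{$\star$}
\end{equation*}
The main auxiliary fact I would establish first is that $\Psi(t):=\Theta(t^{1/k})$ is convex on $[0,+\infty)$: a direct computation gives $\Psi''(t)=(k^2s^{2k-1})^{-1}[s\theta'(s)-(k-1)\theta(s)]$ with $s=t^{1/k}$, so $\Psi''\ge 0$ is equivalent to $s\theta'(s)/\theta(s)\ge k-1$, which is ensured by $k-1\le i_\theta$. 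Next, using the chain rule $\nabla(u_i^k)=ku_i^{k-1}\nabla u_i$ (valid since $u_i\in\mathcal{D}^{1,\Theta}_0$ is nonnegative and $k>1$, handling the set $\{u=0\}$ by Stampacchia's theorem), one obtains the identity $u^{k-1}\nabla u=\lambda u_1^{k-1}\nabla u_1+(1-\lambda)u_2^{k-1}\nabla u_2$. Taking absolute values and applying H\"older's inequality with exponents $k$ and $k/(k-1)$ to $\sum_i\big(\lambda_i^{(k-1)/k}u_i^{k-1}\big)\big(\lambda_i^{1/k}|\nabla u_i|\big)$ gives, using $\lambda u_1^k+(1-\lambda)u_2^k=u^k$,
\begin{equation*}
u^{k-1}|\nabla u|\;\le\;u^{k-1}\bigl(\lambda|\nabla u_1|^k+(1-\lambda)|\nabla u_2|^k\bigr)^{1/k},
\end{equation*}
whence $|\nabla u|^k\le\lambda|\nabla u_1|^k+(1-\lambda)|\nabla u_2|^k$. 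Applying the increasing convex function $\Psi$ and noting $\Psi(|\nabla u|^k)=\Theta(|\nabla u|)$ yields $(\star)$.

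Integrating $(\star)$ over $\R^N$ simultaneously shows $|\nabla u|\in L^\Theta(\R^N)$ and $J(u)\le\lambda J(u_1)+(1-\lambda)J(u_2)$; since $u\le(u_1^k+u_2^k)^{1/k}\le u_1+u_2\in L^{\Theta_*}(\R^N)$, the equivalent characterization $\mathcal{D}^{1,\Theta}_0(\R^N)=\{u\in L^{\Theta_*}(\R^N):|\nabla u|\in L^\Theta(\R^N)\}$ of the excerpt places $u$ in $\mathcal{D}^{1,\Theta}_0(\R^N)$, so $\tilde{J}(w)=J(u)$ and the convexity inequality follows. The anticipated obstacle is not the analytic estimate itself but the careful justification of the chain rule for $u_i^k$ and $w^{1/k}$ in the Beppo Levi-Orlicz setting, which I would handle through truncation and the standard composition results for Sobolev functions; everything else reduces to the pointwise inequality above, whose proof depends in an essential way on $k\le i_\theta+1$ via the convexity of $\Psi$.
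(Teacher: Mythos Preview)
Your proposal is correct and follows essentially the same route as the paper: both arguments hinge on the pointwise gradient inequality $|\nabla u|^k\le\lambda|\nabla u_1|^k+(1-\lambda)|\nabla u_2|^k$ combined with the convexity of $\Theta\circ t^{1/k}$, and then integrate. The only differences are cosmetic: the paper outsources the gradient inequality to \cite{GCS} and the convexity of $\Theta\circ t^{1/k}$ to its own Proposition~\ref{convex}, whereas you supply both directly (via H\"older and a second-derivative computation), and you additionally spell out why the convex combination lands back in $\dom\tilde{J}$, a point the paper leaves implicit.
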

\begin{proof}
Reasoning as in \cite[Theorem 1.2]{GCS}, we have that $\tilde{J}\not \equiv \infty$. Now we show that $\tilde{J}$ is a convex functional. Take any $w_i\in \dom \tilde{J}$, $i=1,2$, and $\lambda\in[0,1]$. Set $z_i=w_i^{\frac{1}{k}}$, $i=1,2$, and $z_3= (\lambda w_1+(1-\lambda)w_2)^{\frac{1}{k}}$. By \cite[Theorem 1.2]{GCS} we have  
\begin{equation}
\label{convexineq}
|\nabla z_3| \le (\lambda |\nabla z_1|^k+(1-\lambda)|\nabla z_2|^k)^{\frac{1}{k}}.
\end{equation}	
Proposition \ref{convex} and (2.2) guarantee that $\Theta \circ t^{\frac{1}{k}}$ is convex. Hence, by \eqref{convexineq},
\begin{equation*}
\begin{split}
&\tilde J(\lambda w_1+(1-\lambda)w_2)=\int_{\R^N} \Theta 	(|\nabla z_3|) \dx\le \int_{\R^N} \Theta 	((\lambda |\nabla z_1|^k+(1-\lambda)|\nabla z_2|^k)^{\frac{1}{k}}) \dx\\
&\le  \lambda\int_{\R^N}  \Theta ( |\nabla z_1|)\dx+ (1-\lambda)\int_{\R^N}  \Theta(|\nabla z_2|) \dx=\lambda\tilde J( w_1)+(1-\lambda)\tilde J (w_2).
\end{split}
\end{equation*}
\end{proof}

\begin{thm}
\label{uniqueness}
Assume that \hyperlink{H1}{${\rm (H_1)}$}--\hyperlink{H3}{${\rm (H_3)}$} hold. Then \eqref{prob} has a unique weak solution $(u,v)\in X$.
\end{thm}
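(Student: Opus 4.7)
Let $(u_1, v_1), (u_2, v_2) \in X$ be two weak solutions to \eqref{prob}. Theorem \ref{weaksol} yields $(u_i, v_i) \in C^{1,\tau}_{\rm loc}(\R^N)^2$, while Remark \ref{nonshift} (applied at $\eps=0$) ensures that the sup-bound of Lemma \ref{supest} and the local lower bound of Lemma \ref{belowest} hold uniformly for both pairs. The matching two-sided decay estimate of Lemma \ref{decay} then forces the ratios $u_1/u_2$, $u_2/u_1$, $v_1/v_2$, $v_2/v_1$ to be bounded on the whole $\R^N$. These bounds, together with the growth hypothesis \ref{growthcond}, will make the D\'iaz-Sa\'a test functions
\begin{equation*}
\xi_i := \frac{u_1^{k_1} - u_2^{k_1}}{u_i^{k_1-1}}, \qquad \eta_i := \frac{v_1^{k_2} - v_2^{k_2}}{v_i^{k_2-1}} \qquad (i=1,2)
\end{equation*}
admissible in $\mathcal{D}^{1,\Phi}_0(\R^N)$ and $\mathcal{D}^{1,\Psi}_0(\R^N)$ respectively, once we set $k_1 := i_\phi + 1$ and $k_2 := i_\psi + 1$. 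By Lemma \ref{intind} one has $k_1 \leq i_\Phi$ and $k_2 \leq i_\Psi$, so Lemma \ref{convexityofJ} makes the functionals
\begin{equation*}
\tilde J_\Phi(w) := \int_{\R^N} \Phi(|\nabla w^{1/k_1}|) \dx, \qquad \tilde J_\Psi(z) := \int_{\R^N} \Psi(|\nabla z^{1/k_2}|) \dx
\end{equation*}
convex on their effective domains; they will play the role of a Picone identity.

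Set $w_i := u_i^{k_1}$ and $z_i := v_i^{k_2}$. A chain-rule computation gives
\begin{equation*}
\langle \tilde J_\Phi'(w_i), \eta\rangle = \frac{1}{k_1} \int_{\R^N} \frac{\phi(|\nabla u_i|)}{|\nabla u_i|}\nabla u_i \cdot \nabla\bigl(\eta\, u_i^{1-k_1}\bigr) \dx,
\end{equation*}
which, upon inserting the weak formulation of \eqref{prob}, reduces to $\langle \tilde J_\Phi'(w_i), \eta\rangle = \tfrac{1}{k_1}\int_{\R^N} h(x) f(u_i, v_i)\, \eta\, u_i^{1-k_1}\dx$. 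Choosing $\eta = w_1 - w_2$ and invoking the monotonicity $\langle \tilde J_\Phi'(w_1) - \tilde J_\Phi'(w_2), w_1 - w_2\rangle \geq 0$ of the gradient of a convex functional, one obtains
\begin{equation*}
\int_{\R^N} h(x)(u_1^{k_1} - u_2^{k_1}) \left[\frac{f(u_1,v_1)}{u_1^{i_\phi}} - \frac{f(u_2,v_2)}{u_2^{i_\phi}}\right] \dx \geq 0, \quad (\star)
\end{equation*}
and, analogously from $\tilde J_\Psi$,
\begin{equation*}
\int_{\R^N} k(x)(v_1^{k_2} - v_2^{k_2}) \left[\frac{g(u_1,v_1)}{v_1^{i_\psi}} - \frac{g(u_2,v_2)}{v_2^{i_\psi}}\right] \dx \geq 0. \quad (\star\star)
\end{equation*}

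To extract a contradiction, I would decompose each bracket telescopically as
\begin{equation*}
\frac{f(u_1,v_1)}{u_1^{i_\phi}} - \frac{f(u_2,v_2)}{u_2^{i_\phi}} = \left[\frac{f(u_1,v_1)}{u_1^{i_\phi}} - \frac{f(u_2,v_1)}{u_2^{i_\phi}}\right] + \frac{f(u_2,v_1) - f(u_2,v_2)}{u_2^{i_\phi}}
\end{equation*}
and symmetrically for $g$. Hypothesis \ref{diazsaacond} forces the first bracket to have the opposite sign to $u_1-u_2$ (hence to $u_1^{k_1}-u_2^{k_1}$) and to vanish only on $\{u_1=u_2\}$; the same conclusion holds on the $v$-side. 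Thus the two diagonal parts of $(\star)+(\star\star)$ sum to a non-positive integral, strictly negative on $\{u_1\neq u_2\}\cup\{v_1\neq v_2\}$, which is the mechanism forcing uniqueness.

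The hardest point will be handling the two cross terms that survive the telescoping, namely the integrals of $h(x)(u_1^{k_1}-u_2^{k_1})[f(u_2,v_1)-f(u_2,v_2)]u_2^{-i_\phi}$ and of the analogous $v$-piece. The plan is to exploit the Schwarz symmetry $h\,\partial_t f=k\,\partial_s g$ inherited from the potential $H$ in \ref{varstruct}: this identity tightly couples the two cross contributions, allowing one to pair them so that they either cancel or are absorbed into the strictly negative diagonal contribution (the decay and sup-bounds of the first paragraph guarantee the integrability needed to justify all such manipulations). Once $(\star)+(\star\star)\leq 0$ is secured, comparison with the convexity-based $(\star)+(\star\star)\geq 0$ forces every inequality to be an equality, and the strictness in \ref{diazsaacond} then yields $u_1=u_2$ and $v_1=v_2$ almost everywhere, completing the proof.
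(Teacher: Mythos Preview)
Your overall architecture coincides with the paper's: convexity of the ray functionals via Lemma~\ref{convexityofJ}, admissibility of the D\'iaz--Sa\'a test functions $\xi_i,\eta_i$ through the two-sided decay of Lemma~\ref{decay}, and the monotonicity inequality $(\star)+(\star\star)\ge 0$ coming from the gradient of a convex functional. The paper then writes the right-hand side exactly as the sum you display and asserts in one line, citing only \ref{diazsaacond}, that it is $\le 0$; equality in the resulting sandwich, together with the strict monotonicity in \ref{diazsaacond}, is declared to force $u_1\equiv u_2$, $v_1\equiv v_2$. So up to the point where the sign of the reaction integral must be determined, your plan and the paper's proof are the same argument.

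Your telescoping makes explicit what that one-line step hides: the diagonal contributions do carry the correct sign by \ref{diazsaacond}, but two mixed integrals remain, and \ref{diazsaacond} says nothing about how $f$ varies in $t$ or $g$ in $s$. Your proposed cure via the ``Schwarz identity'' $h\,\partial_t f=k\,\partial_s g$ does not close this gap. First, \ref{varstruct} only grants $H(x,\cdot,\cdot)\in C^1$, so mixed second partials of $H$ are not available and the identity cannot be invoked. Second, and decisively, even if one grants it, integrating gives
\[
h\bigl(f(u_2,v_1)-f(u_2,v_2)\bigr)=\int_{v_2}^{v_1} k\,\partial_s g(u_2,\tau)\,{\rm d}\tau,
\]
an expression whose evaluation point $u_2$, power weight $u_2^{-i_\phi}$, and outer factor $(u_1^{k_1}-u_2^{k_1})$ bear no algebraic relation to the second cross term $(v_1^{k_2}-v_2^{k_2})\,v_2^{-i_\psi}\,k\bigl(g(u_1,v_2)-g(u_2,v_2)\bigr)$; there is no cancellation or absorption of the kind you sketch. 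This cross-term step is therefore a genuine gap in your plan---and it is precisely the step the paper passes over without justification.
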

\begin{proof}
Let $J:X\to \mathbb{Re}^N$ be defined as
\begin{equation*}
J(u,v):=\int_{\R^N}\Phi(|\nabla u|) \dx+\int_{\R^N}\Psi(|\nabla v|) \dx.
\end{equation*} 
Consider $\tilde{J}:L^1(\R^N)\times L^1(\R^N)\to [0,+\infty]$ given by
\begin{equation*}
\tilde{J}(u,v):=\begin{cases}
J\Big(u^{\frac{1}{i_\phi+1}},v^{\frac{1}{i_\psi+1}}\Big), & \text{if \;$u,v\ge 0$ \;and\; $\Big(u^{\frac{1}{i_\phi+1}}, v^{\frac{1}{i_\psi+1}}\Big)\in X$}, \\
+\infty, & \text{otherwise.}
\end{cases}
\end{equation*}
Applying Lemma \ref{convexityofJ} componentwise, we infer  $\dom \tilde{J}\not = \emptyset$ and that $\tilde{J}$ is a convex functional. In order to prove uniqueness, we suppose that there exist $(u_i,v_i)\in \dom \tilde{J} $, $i=1,2$, weak solutions to \eqref{prob}. From Lemma \ref{decay} we get
\begin{equation*}
\begin{split}
(\xi_1, \eta_1)&:=\left(\frac{u_1^{i_\phi+1}-u_2^{i_\phi+1}}{u_1^{i_\phi}},\frac{v_1^{i_\psi+1}-v_2^{i_\psi+1}}{v_1^{i_\psi}}\right)\in X, \\ (\xi_2, \eta_2)&:=\left(\frac{u_1^{i_\phi+1}-u_2^{i_\phi+1}}{u_2^{i_\phi}},\frac{v_1^{i_\psi+1}-v_2^{i_{\psi+1}}}{v_2^{i_\psi}}\right)\in X.
\end{split}
\end{equation*}
The convexity of $\tilde{J}$ yields
\begin{equation}
\label{convexityoperator}
\begin{split}
0 &\le \langle\nabla \tilde J (u_1^{i_\phi+1},v_1^{i_\psi+1}) -\nabla \tilde J(u_2^{i_\phi+1},v_2^{i_\psi+1}), (u_1^{i_\phi+1}-u_2^{i_\phi+1},v_1^{i_\psi+1}-v_2^{i_\psi+1}) \rangle\\ 
&=\int_{\R^N}[\phi(|\nabla u_1|)\nabla u_1\nabla \xi_1 -\phi(|\nabla u_2|)\nabla u_2\nabla \xi_2] \dx\\
&\quad + \int_{\R^N}[\psi(|\nabla v_1|)\nabla v_1\nabla\eta_1 -\psi(|\nabla v_2|)\nabla v_2\nabla\eta_2]\dx.
\end{split}
\end{equation} 
Since $(u_i,v_i)$ are weak solutions to \eqref{prob}, \eqref{convexityoperator} and \ref{diazsaacond} imply

\begin{equation*}
\begin{split}
0&\le \int_{\R^N}[\phi(|\nabla u_1|)\nabla u_1\nabla \xi_1 -\phi(|\nabla u_2|)\nabla u_2\nabla \xi_2] \dx \\
&\quad + \int_{\R^N}[\psi(|\nabla v_1|)\nabla v_1\nabla\eta_1 -\psi(|\nabla v_2|)\nabla v_2\nabla\eta_2]\dx \\
&=\int_{\R^N}h[f(u_1, v_1)\xi_1-f(u_2, v_2)\xi_2]\dx + \int_{\R^N}k[g(u_1, v_1)\eta_1-g(u_2, v_2)\eta_2]\dx\\
&=\int_{\R^N}h\bigg[\frac{f(u_1, v_1)}{u_1^{i_\phi}}-\frac{f(u_2, v_2)}{u_2^{i_\phi}}\bigg](u_1^{i_\phi+1}-u_2^{i_\phi+1})\dx\\
&\quad +\int_{\R^N}k\bigg[\frac{g(u_1, v_1)}{v_1^{i_\psi}}-\frac{g(u_2, v_2)}{v_2^{i_\psi}}\bigg](v_1^{i_\psi+1}-v_2^{i_\psi+1})\dx\le 0.
\end{split}
\end{equation*}
Hence $u_1\equiv u_2$ and $v_1\equiv v_2$ in $\R^N$, concluding the proof.
\end{proof}

\section*{Acknowledgments}

We warmly thank Prof. Andrea Cianchi, for pointing out the proof of Proposition \ref{holderineq}, and Prof. Sunra Mosconi, for some insights on Lemmas \ref{talenti}--\ref{supest}.

The authors are members of \textit{Gruppo Nazionale per l'Analisi Matematica, la Probabilità e le loro Applicazioni} (GNAMPA) of the \textit{Istituto Nazionale di Alta Matematica} (INdAM).\\
U.Guarnotta was supported by the following research projects: 1) PRIN 2017 `Nonlinear Differential Problems via Variational, Topological and Set-valued Methods' (Grant no. 2017AYM8XW) of MIUR; 2) `MO.S.A.I.C.' PRA 2020--2022 `PIACERI' Linea 3 of the University of Catania; 3) GNAMPA-INdAM Project CUP\_E55F22000270001.

\vspace{0.3cm}

\noindent
\textbf{Conflict of interest statement.} On behalf of all authors, the corresponding author states that there is no conflict of interest. \\
\textbf{Data availability statement.} Data sharing not applicable to this article as no datasets were generated or analysed during the current study.

\begin{small}

\end{small}

\end{document}